\newtheorem{theorem}{Theorem}
\newtheorem{lemma}[theorem]{Lemma}
\newtheorem{corollary}[theorem]{Corollary}
\newtheorem{conjecture}{Conjecture}
\newtheorem{question}[conjecture]{Question}
\newtheorem*{oneshot1}{Theorem 1}
\newtheorem*{oneshot}{Theorem}
\newtheorem*{oneshotC}{Conjecture}
\def\floor#1{\lfloor #1 \rfloor}
\def\cyc{C}
\def\Ad{\textrm{ad}}
\def\mad{\textrm{mad}}
\def\Mad{\textrm{mad}}
\def\ex{\textrm{excess}}
\newcommand\dist[1]{\mbox{distance$#1$}}
\begin{document}
\author{Daniel W. Cranston\thanks{dcransto@uiuc.edu} \\
University of Illinois \\
Urbana-Champaign, USA
\and Seog-Jin Kim\thanks{skim12@konkuk.ac.kr} \\
Konkuk University \\
Seoul, Korea}
\title{List-coloring the Square of a Subcubic Graph}
\date{August 8, 2007}
\maketitle
\abstract{
The {\em square} $G^2$ of a graph $G$ is the graph with the same vertex set
as $G$ and with two vertices adjacent if their distance in $G$ is at most 2. 
Thomassen showed that every planar graph $G$ with maximum degree $\Delta(G)=3$
satisfies $\chi(G^2)\leq 7$.  Kostochka and Woodall conjectured that for every graph, the list-chromatic number of $G^2$ equals the chromatic number of $G^2$, that is $\chi_l(G^2)=\chi(G^2)$ for all $G$.  If true, this conjecture (together with Thomassen's result) implies that every planar graph $G$ with $\Delta(G)=3$ satisfies $\chi_l(G^2)\leq 7$.
We prove that every connected graph (not necessarily planar) with $\Delta(G)=3$ other than the Petersen graph satisfies $\chi_l(G^2)\leq 8$ (and this is best possible).  In addition, we show that if $G$ is a planar graph with $\Delta(G)=3$ and girth $g(G)\geq 7$, then $\chi_l(G^2)\leq 7$.
Dvo\v{r}\'{a}k, \v{S}krekovski, and Tancer showed that if $G$ is a planar graph with $\Delta(G) = 3$ and girth $g(G) \geq 10$, then $\chi_l(G^2)\leq 6$.
We improve the girth bound to show that if $G$ is a planar graph with $\Delta(G)=3$ and $g(G) \geq 9$, then $\chi_l(G^2) \leq 6$.
All of our proofs can be easily translated into linear-time coloring algorithms.
}

\section{Introduction}
We study the problem of coloring the square of a graph.
We consider simple undirected graphs.
Since each component of a graph can be colored independently, we only consider connected graphs.
The \textit{square} of a graph $G$, denoted $G^2$, has the same vertex set as $G$ and has an edge between two vertices 
if the distance between them in $G$ is at most 2.  
We use $\chi(G)$ to denote the chromatic number of $G$.  We use $\Delta(G)$ to denote the largest degree in $G$.  We say a graph
$G$ is \textit{subcubic} if $\Delta(G)\leq 3$.

Wegner \cite{Wegner} initiated the study of the chromatic number for
squares of planar graphs.  This topic has been actively studied
lately due to his conjecture. 

\begin{oneshotC}(Wegner \cite{Wegner}) 
Let $G$
be a planar graph.  The chromatic number $\chi(G^2)$
of $G^2$ is at most 7 if $\Delta(G) = 3$, at most $\Delta(G) + 5$ if $4
\leq \Delta(G) \leq 7$, and at most $\lfloor \frac{3 \Delta(G)}{2} \rfloor
+1$ otherwise. 
\end{oneshotC}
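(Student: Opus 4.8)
This is a conjecture, and it is still open in general; the paragraphs below are therefore a proposal for how one would attack it, together with an indication of where the known partial results come from, rather than a proof.

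The first move is to split into the three regimes. The range $\Delta(G)=3$ is exactly Thomassen's theorem quoted above, so there is nothing to prove there. For the other ranges I would run a minimal-counterexample argument and discharge on $G$ itself (not on $G^2$, which need not be planar). Standard reductions let one assume $G$ is $2$-connected and avoids a list of ``reducible'' local configurations; this list is what bounds how many already-coloured vertices can lie within distance $2$ of an uncoloured one, and hence controls how many colours a greedy extension needs. One then assigns initial charges, say $\deg(v)-6$ to each vertex $v$ and $2\deg(f)-6$ to each face $f$ (total $-12$ by Euler's formula $n-m+f=2$), and designs rules sending charge from large faces and high-degree vertices to the low-degree vertices produced by the reductions, aiming for the usual contradiction. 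For $4\le\Delta\le 7$ I would expect a fairly laborious analysis, essentially handled one value of $\Delta$ at a time, since the additive slack ($+5$) is small and leaves little room in the discharging.

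The genuinely hard part is the \emph{sharp} leading constant $\frac{3}{2}$ for large $\Delta$: the bound $\lfloor 3\Delta/2\rfloor+1$ is attained by explicit planar graphs, so any proof must be tight, whereas pure discharging has historically only produced bounds of the shape $\lceil 5\Delta/3\rceil + O(1)$ (Molloy--Salavatipour). A realistic target is the asymptotic version $\chi(G^2)\le \frac{3}{2}\Delta(1+o(1))$ (Havet--van den Heuvel--McDiarmid--Reed): one BFS-layers $V(G)$, notes that $G^2$ restricted to any bounded-diameter ``cluster'' inherits low degeneracy from the planarity and maximum degree of $G$, colours the clusters cheaply, and then glues the cluster colourings using an iterated-recolouring argument driven by the Lov\'asz Local Lemma to recover the factor $\frac{3}{2}$ in place of the trivial factor $2$ coming from $|N_{G^2}(v)|\le\Delta^2$. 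I expect that Local-Lemma gluing step --- pinning down exactly how much colour is saved at cluster boundaries --- to be the main obstacle, and indeed the reason the full finite conjecture remains open.
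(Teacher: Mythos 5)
This statement is Wegner's conjecture, which the paper does not prove --- it is quoted purely as motivating background, and as the paper's introduction notes it remains open for all $\Delta(G)\geq 4$ (with only the $\Delta(G)=3$ case settled by Thomassen). You have correctly recognized this, and your summary of the state of the art --- Thomassen for $\Delta=3$, the Molloy--Salavatipour bound of roughly $\lceil\frac{5}{3}\Delta\rceil+O(1)$, and the asymptotic $\frac{3}{2}\Delta(1+o(1))$ result of Havet, van den Heuvel, McDiarmid, and Reed --- matches exactly the account given in the paper's introduction, so there is nothing to compare against and no gap to report.
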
 

Thomassen \cite{Thomassen} proved Wegner's
conjecture for $\Delta(G) =3$, but it is still open for all values of
$\Delta(G) \geq 4$. The best known upper bounds are due to Molloy and
Salavatipour \cite{MS}, who showed that $\chi(G^2)\leq\lceil\frac53\Delta\rceil+78$ (the constant 78 can be reduced for large $\Delta$). 
Very recently, Havet, Heuvel, McDiarmid, and Reed~\cite{havet2} proved the upper bound $\frac32\Delta(1+o(1))$.
Better results can be obtained for special
classes of planar graphs. Borodin {\em et al.} \cite{borodin} and
Dvo\v{r}\'{a}k {\em et al.} \cite{kral} proved that $\chi(G^2)=
\Delta(G) +1 $ if $G$ is a planar graph $G$ with sufficiently large
maximum degree and girth at least 7.
A natural variation of this problem is to study the list chromatic number
of the square of a planar graph.

A {\em list
assignment} for a graph is a function $L$ that assigns each vertex a
list of available colors.  The graph is $L$-{\em colorable} if it
has a proper coloring $f$ such that $f(v) \in L(v)$ for all $v$. A
graph is called {\em $k$-choosable} if $G$ is $L$-colorable whenever
all lists have size $k$. The list chromatic number
$\chi_l(G)$ is the minimum $k$ such that $G$ is $k$-choosable.
Kostochka and Woodall~\cite{kostochka} conjectured that $\chi_l(G^2) = \chi(G^2)$ 
for every graph $G$.

We consider the problem of list-coloring $G^2$ when $G$ is subcubic.
If $G$ is subcubic then clearly $\Delta(G^2)\leq (\Delta(G))^2\leq 9$.
It is an easy exercise to show that the Petersen graph is the only subcubic graph $G$ such that $G^2=K_{10}$.
Hence, by the list-coloring version of Brooks' Theorem~\cite{ERT} we conclude that if $G$ is subcubic and $G$ is not the
Petersen graph, then $\chi_l(G^2)\leq\Delta(G^2)\leq 9$.  In fact, we show that this upper bound can be strengthened as follows.
We say that a subcubic graph is \textit{non-Petersen} if it is not the Petersen graph.

\begin{theorem}
\label{mainthm}
If $G$ is a non-Petersen subcubic graph, then $\chi_l(G^2)\leq 8$.
\end{theorem}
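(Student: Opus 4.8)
The plan is to argue by minimal counterexample: let $G$ be a connected non-Petersen subcubic graph with $\chi_l(G^2)\ge 9$ and with $|V(G)|$ as small as possible, and derive a contradiction. The first step dispenses with the case $\Delta(G^2)\le 8$. Here the list version of Brooks' theorem gives $\chi_l(G^2)\le\Delta(G^2)\le 8$ unless $G^2$ is a complete graph or an odd cycle; the only such exception whose list-chromatic number exceeds $8$ is $K_9$, and $G^2=K_9$ is impossible, since it would force every vertex of the $9$-vertex graph $G$ to have degree $3$ (a vertex of degree at most $2$ reaches at most $6$ others within distance $2$), and no $3$-regular graph on $9$ vertices exists. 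Hence in the minimal counterexample we may assume $\Delta(G^2)=9$: some vertex $v$ has a $10$-vertex tree as its closed second neighborhood, so that $v$ and its three neighbors all have degree $3$ and $v$ lies on no cycle of length $3$ or $4$.

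Next I would reduce to the case that $G$ is $3$-regular of girth at least $5$. A vertex of degree at most $1$, or a triangle, produces a vertex of $G^2$-degree at most $7$ (leaving two free colors), while a vertex of degree $2$ or a $4$-cycle leaves one free color; in each case one deletes a suitable small set $S$ from $G$, applies minimality to the still non-Petersen subcubic graph $G-S$, and extends the coloring to $S$. The one genuine subtlety at this stage is that deleting a vertex of $G$ can \emph{increase} distances and therefore \emph{delete} edges of the square, so $(G-S)^2$ is only a subgraph of $G^2-S$; I would patch this by first adding the few edges (or a small gadget) needed to preserve the at-risk distance-$2$ relations among the neighbors of the deleted vertices, checking that subcubicity is maintained. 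After these reductions the minimal counterexample is $3$-regular of girth at least $5$, so $H:=G^2$ is $9$-regular.

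The heart of the proof is thus the $9$-regular graph $H=G^2$ coming from a connected cubic graph $G$ of girth at least $5$ with $G$ not the Petersen graph, for which I must prove $\chi_l(H)\le 8=\Delta(H)-1$, one better than Brooks' theorem yields. The mechanism is the familiar Brooks saving: fix a degree-$9$ vertex $v$ and two of its $H$-neighbors $x,y$ that are \emph{non-adjacent} in $H$ (equivalently, $x$ and $y$ are at distance at least $3$ in $G$). If the coloring can be arranged so that $x$ and $y$ receive a common color, then $v$ sees at most $8$ colors on its neighborhood and an $8$-list suffices at $v$. Concretely I would take a breadth-first ordering of $H$ rooted so that $v$ is colored last and every other vertex has a later neighbor, and drive the greedy coloring so that the constraint at $v$ is relaxed by the repeated color on $\{x,y\}$; realizing the shared color in the list setting is handled by the standard argument underlying the list version of Brooks' theorem applied to $H-v$.

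The main obstacle, and where the exclusion of the Petersen graph finally enters, is the possibility that \emph{no} such saving pair exists: that around every degree-$9$ vertex $v$ the nine $H$-neighbors are pairwise adjacent in $H$, so that $\{v\}\cup N_H(v)$ induces a $K_{10}$. I would show that this local rigidity forces ten vertices of $G$ that are pairwise within distance $2$, that in a cubic graph such a clique in the square saturates the Moore bound, and that this can occur only for the Petersen graph (consistent with the stated fact that Petersen is the unique subcubic $G$ with $G^2=K_{10}$); propagating the constraint through the connectedness of $G$ then forces $G$ itself to be Petersen, the excluded case. Ruling out this configuration, that is, showing that any connected cubic graph of girth at least $5$ other than Petersen has a degree-$9$ vertex with two non-$H$-adjacent neighbors and enough freedom to realize the shared color, is the step I expect to demand the most care.
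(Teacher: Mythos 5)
Your proposal has the right opening moves (list Brooks, reduce to $3$-regular with no short cycles, exploit non-adjacent pairs in $G^2$ to save colors, and let the Petersen graph appear as the obstruction), but the central mechanism is off by one, and that gap is where essentially all of the paper's work lives. If $v$ has degree $9$ in $H=G^2$ and two of its $H$-neighbors receive a common color, then $v$ sees at most $8$ distinct colors --- but $|L(v)|=8$, so every color of $L(v)$ may still be forbidden. A single ``Brooks saving'' at one vertex proves $\chi_l(H)\le\Delta(H)=9$, not $\le 8$. To finish greedily with $8$-lists you need \emph{every} vertex except the last two to have at least \emph{two} uncolored $H$-neighbors when it is colored, the penultimate vertex to have one unit of savings, and the final vertex to have \emph{two} units of savings; this is exactly the paper's ``excess'' lemma (the hypotheses $\mathrm{excess}(u)\ge 1$, $\mathrm{excess}(v)\ge 2$ for an adjacent pair). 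The two-uncolored-successors condition is why the paper's basic tool colors $G^2-\{u,v\}$ for an \emph{edge} $uv$ by distance classes, and manufacturing the $1+2$ savings at an adjacent pair is what forces the long chain of structural lemmas (no $4$-cycles, no two $5$-cycles sharing an edge, girth $>6$, and then a multi-case analysis of the lists around a shortest cycle of length $\ge 7$). Your ``main obstacle'' --- that no non-adjacent pair exists in some $N_H(v)$, forcing a local $K_{10}$ and hence Petersen --- is not where the difficulty lies: non-adjacent pairs exist in abundance once the girth exceeds $4$, and the hard part is realizing \emph{two independent} savings at a single vertex in the list setting. The Petersen graph actually enters only in one small subcase of the $5$-cycle analysis, not via a global Moore-bound argument.

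A second, independent problem is your reduction step. Patching $(G-S)^2$ back up to $G^2-S$ by ``adding the few edges or a small gadget needed to preserve the at-risk distance-$2$ relations'' is precisely the pitfall the paper warns against in its preliminaries: the added edges can violate subcubicity (e.g.\ the three neighbors of a deleted $3$-vertex may already have degree $3$), and a gadget changes the graph you are inducting on. The paper sidesteps minimality entirely in this part: it never deletes vertices and recolors, but instead greedily colors $G^2$ minus a small set directly (via the distance-class ordering) and then extends, so the distinction between $(G-S)^2$ and $G^2-S$ never arises. As written, your reduction to the $3$-regular girth-$\ge 5$ case is unjustified, and even granting it, the girth-$5$ and girth-$6$ cases (which your sketch treats as already dispatched) each require their own substantial list-coloring arguments in the paper.
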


\begin{figure}[!h!bt]
  \begin{center}
    \psfrag{V1}{{\normalsize$v_1$}}
    \psfrag{V2}{{\normalsize$v_2$}}
    \psfrag{V3}{{\normalsize$v_3$}}
    \psfrag{V4}{{\normalsize$v_4$}}
    \psfrag{V5}{{\normalsize$v_5$}}
    \psfrag{V6}{{\normalsize$v_6$}}
    \psfrag{V7}{{\normalsize$v_7$}}
    \psfrag{V8}{{\normalsize$v_8$}}
    \psfrag{U1}{{\normalsize$v_1$}}
    \psfrag{U2}{{\normalsize$v_2$}}
    \psfrag{U3}{{\normalsize$v_3$}}
    \psfrag{U4}{{\normalsize$v_4$}}
    \psfrag{U5}{{\normalsize$v_5$}}
    \psfrag{U6}{{\normalsize$v_6$}}
    \psfrag{U7}{{\normalsize$v_7$}}
    \psfrag{U8}{{\normalsize$v_8$}}
    \psfrag{First}{(a)}
    \psfrag{Second}{(b)}
\scalebox{.88}{
    \includegraphics{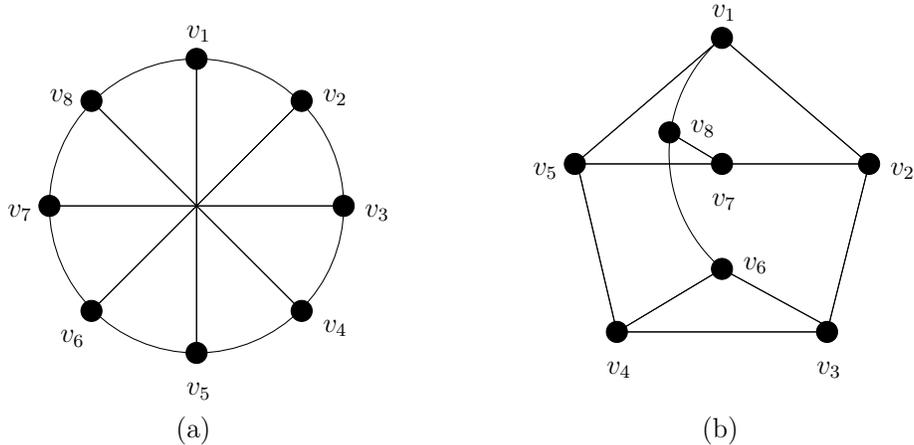}
}
  \end{center}
\caption{Two graphs, each on 8 vertices; each has $K_8$ as its square. (a) An 8-cycle
$v_1,v_2,\ldots,v_8$ with ``diagonals'' (i.e. the additional edges are $v_iv_{i+4}$ for each $i\in\{1,2,3,4\}$).  This graph has girth 4.
(b) This graph has girth 3.
}
\end{figure}

Theorem~\ref{mainthm} is best possible, as illustrated by the graphs above.
The graph on the left has girth 4.  The graph on the right has girth 3.
The square of each graph is $K_8$.  Thus, each graph requires lists of size 8.
In fact, there are an infinite number of interesting subcubic graphs $G$ such 
that $\chi_l(G^2)=8$.  Let $H$ be the Petersen graph with an edge removed.
Note that $H^2 = K_8$.  Hence, any graph $G$ which contains $H$ as a subgraph
satisfies $\chi_l(G^2)\geq 8$.

In Section~\ref{prelims} we introduce definitions and themes common to our proofs.
In Section~\ref{mainsec} we prove Theorem~\ref{mainthm}.
In Section~\ref{planargirth7} we show that if $G$ is a planar subcubic graph with girth at least 7, then $\chi_l(G^2)\leq 7$.
Dvo\v{r}\'{a}k, \v{S}krekovski, and Tancer \cite{DST} showed that 
if $G$ is a subcubic planar graph
with girth at least 10, 
then $\chi_l(G^2)\leq 6$.
In Section~\ref{planargirth9} we extend their result by lowering the girth bound from 10 to 9.

\section{Preliminaries}
\label{prelims}
We use $n$, $e$, and $f$ to denote the number of vertices, edges, and faces in a graph.
A \textit{partial (proper) coloring} is the same as a proper coloring except that some vertices may be uncolored.
We use $g(G)$ to denote the girth of graph $G$.  When the context is clear, we simply write $g$.
We use $k$-vertex to denote a vertex of degree $k$.
We use $\textit{ad}(G)$ to denote the average degree of a graph.
Similarly, we use $\textit{mad}(G)$ to denote the maximum average degree of $G$; that is, the
maximum of $2|E(H)|/|V(H)|$ over all induced subgraphs $H$ of $G$.
We use $N[v]$ to denote the closed neighborhood of $v$ in $G^2$.
We use $G[V_1]$ to denote the subgraph of $G$ induced by vertex set $V_1$.

Throughout the paper, we use the idea of \textit{saving a color} at a vertex $v$.  By this we mean that we assign colors to two neighbors of $v$ in $G^2$ but we only reduce the list of colors available at vertex $v$ by one.  A typical example of this occurs when $v$ is adjacent to vertices $v_1$ and $v_2$ in $G^2$, $v_1$ is not adjacent to $v_2$ in $G^2$, and $|L(v_1)|+|L(v_2)| > |L(v)|$.  This inequality implies that either $L(v_1)$ and $L(v_2)$ have a common color or that some color appears in $L(v_1)\cup L(v_2)$ but not in $L(v)$.  In the first case, we save by using the same color on vertices $v_1$ and $v_2$.  In the second case, we use a color in $(L(v_1)\cup L(v_2))\setminus L(v)$ on the vertex where it appears and we color the other vertex arbitrarily.

We say a graph $G$ is \textit{$k$-minimal} if $G^2$ is not $k$-choosable, but the square of every proper subgraph of $G$ is
$k$-choosable.
A \textit{configuration} is an induced subgraph.
We say that a configuration is \textit{$k$-reducible} if it cannot appear in a $k$-minimal graph (we will be interested in $k\in\{6,7,8\}$).
We say that a configuration is \textit{$6'$-reducible} if it cannot appear in a 6-minimal graph with girth at least 7.
Note that for every $k\geq 4$ a 1-vertex is $k$-reducible 
(if $G$ contains a 1-vertex $x$, by hypothesis we can color $(G-x)^2$, then we can extend the coloring to $x$ since in this case $(G-x)^2=G^2-x$).  
Hence, in the rest of this paper, we assume our graphs have no 1-vertices.

Note that the definition of $k$-minimal requires that for every subgraph $H$ the square of $G- V(H)$ is $k$-choosable, but does not require the stronger statement that for every subgraph $H$ the graph $G^2- V(H)$ is $k$-choosable.  This is a subtle, but significant distinction.  
To avoid trouble, in Sections 4 and 5 we will only consider reducible configurations $H$ such that $G^2- V(H) = (G- V(H))^2$; otherwise, we may face difficulties as in the next paragraph.  

We give a fallacious proof that $\chi_l(G^2)\leq 7$ for every subcubic planar graph $G$ with girth at least 6.  Clearly, a vertex of degree 2 is a 7-reducible configuration (and so is a vertex of degree 1), since it has degree at most 6\ in $G^2$.  Let $G$ be a 7-minimal subcubic planar graph of girth at least 6.  Since, $G$ is planar and has girth at least 6, $G$ has a vertex $v$ of degree at most 2 (by Lemma~\ref{girthlemma}).  By hypothesis, we can color $G^2-v$.  Since $v$ has at most 6 neighbors in $G^2$ we can extend the coloring to $v$.

The flaw in this proof is that by hypothesis, we can color $(G-v)^2$, which may have one less edge than $G^2-v$; in particular, if $v$ is adjacent to vertices $u$ and $w$, then $G^2-v$ contains the edge $uw$, but $(G-v)^2$ does not.
We may be tempted to add the edge $uw$ to the graph $(G-v)^2$; however, if we do, the new graph may not satisfy the girth restriction.

In both Section~\ref{planargirth7} and Section~\ref{planargirth9} we make use of 
upper bounds on $\Mad(G)$.
To prove these bounds, we use the following well-known lemma.

\begin{lemma}
\label{girthlemma}
If $G$ is a planar graph with girth at least $g$, then $\Mad(G) < \frac{2g}{g-2}$.
\end{lemma}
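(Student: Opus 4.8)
The plan is to use Euler's formula together with a standard counting argument on faces. First I would recall that for a connected planar graph, Euler's formula gives $n - e + f = 2$. If $G$ is disconnected or has components/structure that make this awkward, it suffices to prove the bound on each $2$-connected block (or to note that adding edges only increases the average degree, so we may assume $G$ is connected; bridges and cut-vertices can be handled by a routine reduction, or one simply proves the inequality for an arbitrary induced subgraph $H$, which is what $\Mad$ really requires). So I would fix an induced subgraph $H$ with $n$ vertices and $e$ edges, assume $H$ is connected and has at least one cycle (otherwise $H$ is a forest and $2e/n < 2 \le \frac{2g}{g-2}$ trivially), and work with a planar embedding of $H$.

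The key step is the edge–face incidence count. Each face of the embedding is bounded by a closed walk of length at least $g$ (here I use that $G$, hence $H$, has girth at least $g$; one must be slightly careful that the boundary walk of a face in a graph with bridges can repeat edges, but each such bounding walk still has length $\ge g$ when the face is not the single face of a tree — and in the forest case we have already disposed of things). Summing the boundary lengths over all faces counts each edge at most twice, so $g f \le 2e$, i.e. $f \le 2e/g$. Substituting into Euler's formula: $2 = n - e + f \le n - e + 2e/g = n - e(1 - 2/g) = n - e\frac{g-2}{g}$. Rearranging gives $e\frac{g-2}{g} \le n - 2 < n$, hence $e < \frac{gn}{g-2}$, and therefore $\Mad(G) = \max_H \frac{2e(H)}{n(H)} < \frac{2g}{g-2}$.

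The main (and really the only) obstacle is the bookkeeping around degenerate cases: graphs with fewer than $g$ vertices, acyclic subgraphs, and planar embeddings whose faces are bounded by walks rather than cycles (bridges). None of these is deep — acyclic $H$ gives $2e/n < 2 < \frac{2g}{g-2}$ directly, a subgraph too small to contain a cycle is acyclic, and for the face-length bound one uses boundary \emph{walks} rather than cycles so the inequality $gf \le 2e$ still holds — but they are exactly the kind of thing one must mention for the proof to be correct rather than merely morally correct. Since the lemma is quoted as well known and is used only as a black box to bound $\Mad(G)$ in Sections~\ref{planargirth7} and~\ref{planargirth9}, I would keep the write-up to the two-line Euler's-formula computation above and relegate the degenerate cases to a parenthetical remark.
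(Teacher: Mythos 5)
Your proof is correct and follows essentially the same route as the paper: reduce to bounding the average degree of an arbitrary subgraph, then combine Euler's formula with the face-length count $gf \le 2e$. The only difference is that you spell out the degenerate cases (forests, disconnectedness, bridge-bounded faces) that the paper's two-line computation silently elides.
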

\begin{proof}
Every subgraph of $G$ is a planar graph with girth at least $g$; hence, it is enough to show that ${\Ad = \frac{2e}n < \frac{2g}{g-2}}$.
From Euler's formula we have $f - e + n = 2$.  By summing the lengths of all the faces, we get $2e\geq fg$.  
Combining these gives the following inequality.  
\begin{eqnarray*}
e &<& e+2 \leq \frac{2e}{\Ad} + \frac{2e}g \\
1 &<& \frac2{\Ad} + \frac2g \\
\Ad &<& \frac{2g}{g-2}
\end{eqnarray*}

\end{proof}
In Section~\ref{mainsec}, we show that given a graph $G$ with lists of size 8, we can greedily color all but a few vertices of $G$, each near a central location.  The ``hard work'' in Section~\ref{mainsec} is showing that we can extend the coloring to these last few uncolored central vertices.

The outlines of Section~\ref{planargirth7} and Section~\ref{planargirth9} are very similar. 
In each section, 
we exhibit four reducible configurations; recall that a reducible
configuration cannot occur in a $k$-minimal graph. 
Next, we show that if a subcubic planar graph with girth at least 7~(resp. 9)
does not contain any of these reducible configurations, then $G$ has
$\Mad(G)\geq \frac{14}5~(\frac{18}7)$. This implies that each
subcubic planar graph of girth 7~(resp. 9) contains a reducible
configuration. It follows that there is no $6'$-minimal subcubic graph with $\mad(G) < \frac{18}7$ and there is no 7-minimal subcubic graph with $\mad(G) < \frac{14}5$; so the theorems are true.

\section{General subcubic graphs}
\label{mainsec}
We begin this section by proving a number of structural lemmas about 8-minimal subcubic graphs.
We conclude by showing that if $G$ is a non-Petersen subcubic graph, 
then $\chi_l(G^2)\leq 8$.
\begin{lemma}
\label{mainlemma}
If $G$ is a subcubic graph, then for any edge $uv$ we have $\chi_l(G^2-\{u,v\})\leq 8$.
\end{lemma}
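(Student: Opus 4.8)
The plan is to induct on the number of edges. Let $uv$ be an edge of $G$ and let $G' = G - \{u,v\}$. If $G'$ is the Petersen graph we would need to handle that separately, but in fact this cannot happen: the Petersen graph has no vertex of degree less than $3$, and deleting two adjacent vertices of a subcubic graph produces at least four vertices of degree at most $2$ (the remaining neighbors of $u$ and of $v$), so $G'$ is never the Petersen graph. Hence by Theorem~\ref{mainthm} — or rather, to avoid circularity, by induction on $|E(G)|$ applied to each component of $G'$ together with the easy base cases — we may assume $\chi_l((G')^2) \le 8$. The subtlety flagged in Section~\ref{prelims} does not bite here, because we are proving a statement about $(G - \{u,v\})^2$ directly, not about $G^2 - \{u,v\}$: we really do get to use lists of size $8$ on all of $G'$ and the edges present are exactly those of $(G')^2$.

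So the task reduces to: starting from a proper $L$-coloring of $(G')^2$ with $|L(w)| = 8$ for all $w$, handle the (at most) $2$ deleted vertices — but wait, those are gone. Re-reading: we want to $L$-color $G^2 - \{u,v\}$, i.e. the graph on $V(G) \setminus \{u,v\}$ but with the \emph{extra} square-edges that $u$ and $v$ would have contributed. The vertices affected are the neighbors of $u$ and $v$ in $G$ (other than $u,v$ themselves): call them $N(u) \setminus \{v\} = \{u_1, u_2\}$ and $N(v) \setminus \{u\} = \{v_1, v_2\}$ (some of these may coincide, and each set may have size $0$ or $1$ if $u$ or $v$ has degree less than $3$). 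The graph $G^2 - \{u,v\}$ differs from $(G')^2$ by adding edges among $\{u_1,u_2\}$ and among $\{v_1,v_2\}$ (the pairs at distance $2$ through $u$ resp. through $v$, which become distance $\le 2$ inside $G - \{u,v\}$ only if they were already close, but through the deleted vertex they are \emph{not} close in $G'$ — so these are genuinely new edges). That is at most one new edge near $u$ and one near $v$: the set of new edges is a matching of size at most $2$.

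Now the argument: take the vertices $u_1, u_2, v_1, v_2$ (at most four of them) and uncolor them in the given $(G')^2$-coloring. Each such vertex $w$ has degree at most $8$ in $G^2$, hence at most $8$ in $G^2 - \{u,v\}$, hence it has at most $8$ colored neighbors there, which is not immediately enough for a greedy argument on a list of size $8$. The fix is that $w$'s degree in $G^2 - \{u,v\}$ is genuinely smaller than its degree in $G^2$: each of $u_1, u_2$ loses $u$ as a $G^2$-neighbor (and $v$ too if it was one), and likewise for $v_1, v_2$. Concretely, $\deg_{G^2-\{u,v\}}(u_i) \le \deg_{G^2}(u_i) - 1 \le 8 - 1 = 7$, since $u$ is a $G$-neighbor of $u_i$ and is deleted. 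With at most four uncolored vertices each having at most $7$ neighbors in $G^2 - \{u,v\}$, we uncolor all four and recolor greedily; but we must check the uncolored vertices do not block each other. Here the new matching edges matter: $u_1u_2$ and $v_1v_2$ may be present. The clean way is to order the recoloring so that we color the vertices one at a time; when we color a vertex $w$, its already-colored neighbors among the other three uncolored-set members number at most $\ldots$ — and we need $7 + (\text{that count}) < 8$, forcing that count to be $0$, which is false in general.

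So the greedy count is tight and the main obstacle is exactly these at most two new matching edges $u_1u_2$, $v_1v_2$ plus possible edges between the $u$-side and the $v$-side. The intended resolution, I expect, is a short case analysis (this is why the lemma is stated with one edge $uv$ rather than two arbitrary vertices): when $d_G(u) = d_G(v) = 3$, use the structure that $u_1, u_2$ are nonadjacent in $G'$ to $v_1, v_2$ unless forced, and invoke the ``saving a color'' technique from Section~\ref{prelims} — e.g. if $u_1 u_2 \notin E(G^2 - \{u,v\})$ we needn't worry, and if it is, then $u_1$ and $u_2$ were at distance $2$ in $G - \{u,v\}$ through some common neighbor, so this common neighbor constrains both, and one checks $|L(u_1)| + |L(u_2)| = 16 > 8 \ge$ (relevant list size) to save a color at the common neighbor, freeing up room. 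The cleanest framing: uncolor $\{u_1,u_2,v_1,v_2\}$, observe each has $\le 7$ colored neighbors, and show the graph these four induce in $G^2 - \{u,v\}$ is a forest (in fact a union of at most two edges), so it is $1$-degenerate; a $1$-degenerate graph on lists of size $8 - 7 = 1$ plus the degeneracy slack is colorable — formally, order the four vertices so each has at most one later neighbor among them, giving $\le 7 + 1 = 8$ used colors, and when equality holds use the save-a-color trick at the relevant distance-$2$ connector to reduce the effective demand by one. The bulk of the write-up is verifying, across the few degree patterns for $u$ and $v$, that this induced graph on the neighbors really is this simple and that a connector vertex is always available when a save is needed.
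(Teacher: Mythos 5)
Your proposal takes a genuinely different route from the paper, and it has real gaps; it also misses the observation that makes this lemma a one-paragraph greedy argument. The paper never passes through $(G-\{u,v\})^2$: it colors the vertices of $G^2-\{u,v\}$ greedily in order of \emph{decreasing distance in $G$ from the edge $uv$}. Every vertex $w$ has $|N[w]|\le 10$ in $G^2$, i.e.\ at most $9$ neighbors. When $w$ is colored, either the first two vertices of a shortest path from $w$ to $uv$ are still uncolored (they lie in strictly lower distance classes), or $w$ is at distance $1$ from the edge, in which case $u$ and $v$ themselves are two $G^2$-neighbors of $w$ that are deleted and hence never colored. Either way at least two of $w$'s $G^2$-neighbors are uncolored, so at most $7$ colors are forbidden and lists of size $8$ suffice. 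No induction, no recoloring, no case analysis.

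The concrete problems with your route are these. (i) Your induction does not deliver the hypothesis you use: inducting on the lemma statement gives $\chi_l((G')^2-\{u',v'\})\le 8$ for an edge $u'v'$ of $G'$, not $\chi_l((G')^2)\le 8$; the latter is Theorem~\ref{mainthm}, whose proof relies on this lemma, so the circularity you flag is not actually avoided by the parenthetical remark. (ii) Your ``cleanest framing'' asserts that the subgraph induced by $\{u_1,u_2,v_1,v_2\}$ in $G^2-\{u,v\}$ is a union of at most two edges. Only the \emph{new} edges (present in $G^2-\{u,v\}$ but absent from $(G')^2$) form such a matching; the induced subgraph can also contain pre-existing square-edges between the $u$-side and the $v$-side, e.g.\ $u_1v_1$ whenever $uv$ lies on a $4$- or $5$-cycle, and you acknowledge exactly these edges earlier in the same paragraph before dropping them. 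With such edges present, a vertex of the set can have $7$ colored neighbors outside the set plus up to three inside it, and the degeneracy count you propose does not close; your own text concedes that the count it would require ``is false in general.'' (iii) The remaining ``save a color'' case analysis over the degree patterns of $u$ and $v$ is deferred rather than carried out. So even setting aside (i) and (ii), what you have written is a plan for a proof, not a proof.
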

\begin{proof}
For every vertex $w$ other than $u$ and $v$, we define the \textit{distance class} of $w$ to be the distance in $G$ from $w$ to edge $uv$.
We greedily color the vertices of $G^2-\{u,v\}$ in order of decreasing distance class.  We claim that lists of size 8 suffice.
Note that $|N[w]|\leq 10$ for every vertex $w$.  If at least two vertices in $N[w]$ distinct from $w$ are uncolored when we color $w$, then we need at
most $10-2$ colors at vertex $w$.  Suppose $w$ is in distance class at least 2.  Let $x$ and $y$ be the first two vertices on a 
shortest path in $G$ from $w$ to edge $uv$.  Since vertices $x$ and $y$ are in lower distance classes than $w$, they are both uncolored
when we color $w$.  Hence, we need at most $10-2$ colors at vertex $w$.  If $w$ is in distance class 1, then $u$ and $v$ are
uncolored when we color $w$.  So again we need only $8$ colors.
\end{proof}

\def\excess{\textrm{excess}}
Lemma~\ref{mainlemma} shows that if $G$ is a subcubic graph, then lists of size 8 are sufficient to color all but two adjacent vertices of $G^2$.  Hence, if $H$ is any subgraph that contains an edge, then we can color $G^2- V(H)$ from lists of size 8.
The next lemma relies on the same idea as Lemma~\ref{mainlemma}, but generalizes the context in which it applies.
Given a graph $G$ and a partial coloring of $G^2$, 
we define \textit{excess}($v$) to be $1~+$~(the number of colors available at vertex $v$) $-$ 
(the number of uncolored neighbors of $v$ in $G^2$).  
Note that for any subcubic graph $G$ and any such partial coloring, every vertex $v$ has $\excess(v)\geq 0$.  Intuitively, the excess of a vertex $v$ measures how many colors we have ``saved'' on $v$ (colors are saved either from using the same color on two neighbors of $v$ or simply because $v$ has fewer than 9 neighbors in $G^2$).  For example, if two neighbors of $v$ in $G^2$ use the same color, then $\excess(v)\geq 1$.  Similarly, if $v$ lies on a 4-cycle or a 3-cycle, then $\excess(v)\geq 1$ or $\ex(v)\geq 2$, respectively.  Vertices with positive excess play a special role in finishing a partial coloring.

\begin{lemma}
\label{extlemma}
Let $G$ be a subcubic graph and let $L$ be a list assignment with lists of size 8.
Suppose that $G^2$ has a partial coloring from $L$.  Suppose also that 
vertices $u$ and $v$ are uncolored, are adjacent in $G^2$, and that $\ex(u)\geq 1$ and $\ex(v)\geq 2$.
If we can order the uncolored vertices so that each vertex except $u$ and $v$ is succeeded in the order by at least 2 adjacent vertices in $G^2$, then we can finish the partial coloring.
\end{lemma}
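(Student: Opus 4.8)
The plan is to extend the partial coloring greedily, but to postpone $u$ and $v$ until the very end. First I would note that we may assume $u$ and $v$ occur last in the given order of the uncolored vertices: their only hypothesis ("succeeded by at least $2$ adjacent vertices") is vacuous, and moving them to the end cannot decrease the number of later $G^2$-neighbors of any other uncolored vertex, so the ordering hypothesis is preserved for every vertex $w\notin\{u,v\}$.

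Next I would color the uncolored vertices other than $u$ and $v$, one at a time, in this order. When we reach such a vertex $w$, the two $G^2$-neighbors of $w$ guaranteed by the hypothesis come later in the order, hence are still uncolored (and $u$, $v$ are uncolored as well). Since $w$ has at most $9$ neighbors in $G^2$, at most $9-2=7$ of them are colored at this moment, so they forbid at most $7$ colors; as $|L(w)|=8$, some color of $L(w)$ remains available, and we use it on $w$. After this pass the only uncolored vertices are $u$ and $v$.

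The key observation for the last step is that $\excess$ is nondecreasing as a partial coloring is extended: coloring a vertex $w$ leaves $\excess(x)$ unchanged for every $x$ not adjacent to $w$ in $G^2$, and for $x$ adjacent to $w$ it decreases the number of uncolored neighbors of $x$ by one and the number of available colors at $x$ by at most one. Hence we still have $\excess(u)\ge 1$ and $\excess(v)\ge 2$ once only $u$ and $v$ are uncolored. Since $u$ and $v$ are adjacent in $G^2$ and everything else is colored, each of them has exactly one uncolored neighbor, so by definition $u$ has at least $\excess(u)\ge 1$ available colors and $v$ has at least $\excess(v)\ge 2$ available colors. Color $u$ with any available color $c$; since $v$ has at least two available colors, one of them differs from $c$, and we use it on $v$, finishing the partial coloring. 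I do not expect a real obstacle here — the argument is bookkeeping around the greedy process plus the monotonicity of $\excess$; the only points needing care are checking that the two promised later neighbors of $w$ are genuinely still uncolored when $w$ is processed (which is exactly why $u$ and $v$ are deferred) and that $u$ and $v$ each have only one uncolored neighbor at the end, so that their excess directly bounds the colors still available.
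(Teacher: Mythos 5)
Your proof is correct and follows essentially the same approach as the paper's: greedily color all uncolored vertices except $u$ and $v$ in the given order, using the two later uncolored $G^2$-neighbors to guarantee an available color, and then finish $u$ and $v$ using their excess. Your additional care in justifying that $u$ and $v$ can be moved to the end and that excess is monotone under extension is a slight tightening of the paper's argument, not a different route.
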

\begin{proof}
We will color the vertices greedily according to the order.
Recall that for each vertex $w$, we have $|N[w]|\leq 10$.  Since at least two vertices in $N[w]$ will be uncolored at the time we color $w$, we will have a color available to use on each vertex $w$ (other than $u$ and $v$).  Since $u$ and $v$ are the only vertices not succeeded by 2 adjacent vertices in $G^2$, they must be the last two vertices in the order.
Because $\ex(u)\geq 1$ and $\ex(v)\geq 2$, we can finish the coloring by greedily coloring $u$, then $v$.
\end{proof}

A simple but useful instance where Lemma~\ref{extlemma} applies is when the uncolored vertices induce a connected subgraph and vertices $u$ and $v$ are adjacent (we order the vertices by decreasing distance (within the subgraph) from edge $uv$).
Whenever we say that we can greedily finish a coloring,
we will be using Lemma~\ref{extlemma}.
Often, we will specify an order for the uncolored vertices; when we do not give an order it is because they induce a connected subgraph. 
The next two lemmas exhibit small configurations which allow us to apply Lemma~\ref{extlemma}.

\begin{lemma}
\label{3reg}
If $G$ is an 8-minimal subcubic graph, then $G$ is 3-regular.
\end{lemma}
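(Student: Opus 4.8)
The plan is to argue by contradiction: suppose an $8$-minimal subcubic graph $G$ has a vertex $v$ of degree $2$. Since $1$-vertices have already been excluded, showing $G$ has no $2$-vertex will prove that $G$ is $3$-regular. Write $N_G(v)=\{u,w\}$. The strategy is to delete the two vertices $u,v$ of $G^2$, color what remains by Lemma~\ref{mainlemma}, and then extend the coloring by coloring $u$ first and $v$ second. Using Lemma~\ref{mainlemma} (rather than, say, coloring $(G-v)^2$) also sidesteps the subtlety flagged in Section~\ref{prelims}, that the square of a vertex-deleted subgraph can be missing the edge $uw$.

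First I would record two degree bounds in $G^2$. Because $v$ is a $2$-vertex, its neighbors in $G^2$ are $u$, $w$, the at most two other neighbors of $u$, and the at most two other neighbors of $w$; hence $v$ has at most $6$ neighbors in $G^2$. For $u$: if $\deg_G(u)=2$ then $u$ has at most $6$ neighbors in $G^2$, while if $\deg_G(u)=3$, say $N_G(u)=\{v,x,y\}$, then the neighbors of $u$ in $G^2$ are $x$, $y$, and the neighbors (other than $u$) of $v$, $x$, and $y$; here $v$ contributes only its single other neighbor $w$. This is the one place the hypothesis "$v$ has degree $2$" is genuinely used: it gives $\deg_{G^2}(u)\le 3+5=8$, a bound that fails for a generic $3$-vertex. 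In all cases $\deg_{G^2}(u)\le 8$ and $\deg_{G^2}(v)\le 6$.

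Now I would apply Lemma~\ref{mainlemma} to the edge $uv$: since $G$ is subcubic, $G^2-\{u,v\}$ is $8$-choosable, so it has a proper coloring from the given lists of size $8$. Extend this coloring as follows. The only uncolored neighbor of $u$ in $G^2$ is $v$, so $u$ has at most $7$ colored neighbors and at least one color of $L(u)$ is available; color $u$ with such a color. Now every neighbor of $v$ in $G^2$ is colored, but there are at most $6$ of them, so at least two colors of $L(v)$ are available; color $v$. The result is a proper $L$-coloring of $G^2$, contradicting that $G^2$ is not $8$-choosable. Therefore $G$ has no $2$-vertex, so $G$ is $3$-regular.

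I do not expect a serious obstacle here: once the setup is right the extension is a routine greedy step. The only point requiring care is the bound $\deg_{G^2}(u)\le 8$, which must be extracted from the fact that $u$ has the $2$-vertex $v$ as a neighbor; everything else is immediate from Lemma~\ref{mainlemma}.
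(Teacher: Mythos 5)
Your proof is correct and follows essentially the same route as the paper: color everything except the 2-vertex and one neighbor via Lemma~\ref{mainlemma}, then finish greedily using the degree bounds $\deg_{G^2}(u)\le 8$ and $\deg_{G^2}(v)\le 6$ that come from $v$ having degree 2. The paper merely packages this final extension through its excess function and Lemma~\ref{extlemma} (noting $\ex(u)\ge 1$ and $\ex(v)\ge 3$) rather than carrying out the two greedy steps explicitly.
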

\begin{proof}
Suppose $u$ is a vertex with $d(u)\leq 2$.  Let $v$ be a neighbor of $u$.  Note that ${\ex(v)\geq 1}$ and $\ex(u)\geq 3$.
So by Lemma~\ref{extlemma}, we can color $G^2$ from lists of size 8.
\end{proof}

\begin{lemma}
\label{noC3}
If $G$ is an 8-minimal subcubic graph, then $g(G) > 3$.
\end{lemma}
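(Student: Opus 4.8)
The plan mirrors the proof of Lemma~\ref{3reg}. Suppose, for contradiction, that $G$ is an $8$-minimal subcubic graph containing a triangle $v_1v_2v_3$, and let $L$ be an arbitrary list assignment with all lists of size $8$; the goal is to exhibit an $L$-coloring of $G^2$. By Lemma~\ref{3reg} we may assume $G$ is $3$-regular, so each $v_i$ has exactly one neighbor $u_i$ outside $\{v_1,v_2,v_3\}$.

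The key point is that a triangle forces each of its vertices to have small degree in $G^2$. Every vertex within distance $2$ of $v_1$ in $G$ lies in $\{v_2,v_3,u_1,u_2,u_3\}\cup N(u_1)$, so $v_1$ has at most $7$ neighbors in $G^2$; any coincidences among these vertices only shrink the set, so the bound holds unconditionally, and symmetrically $v_2$ has at most $7$ neighbors in $G^2$. Consequently, in the empty partial coloring of $G^2$ we have $\ex(v_1)\ge 1+8-7=2$ and $\ex(v_2)\ge 2$. Since $v_1$ and $v_2$ are adjacent in $G$ (hence in $G^2$) and $G^2$ is connected (because $G$ is connected), ordering the vertices by decreasing distance in $G^2$ from the edge $v_1v_2$ satisfies the hypothesis of Lemma~\ref{extlemma} with $u=v_1$ and $v=v_2$. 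That lemma then completes the coloring of $G^2$ from $L$, contradicting the assumption that $G^2$ is not $8$-choosable; hence $g(G)>3$.

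Because the argument is just a degree count followed by an invocation of Lemma~\ref{extlemma}, I do not expect a genuine obstacle. The only things requiring care are that the excess must be computed with respect to $G^2$ rather than $G$, and that the bound ``at most $7$ neighbors in $G^2$'' is verified to survive every possible coincidence among the vertices $v_i$, the vertices $u_i$, and the neighbors of $u_1$ (each such coincidence only helps). If one prefers to route the argument through Lemma~\ref{mainlemma} instead, the same conclusion follows by first coloring $G^2-\{v_1,v_2\}$ and then extending, since $v_1$ and $v_2$ then have at most $6$ colored neighbors apiece, again leaving $\ex(v_1),\ex(v_2)\ge 2$.
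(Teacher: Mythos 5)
Your proof is correct in substance and takes essentially the same route as the paper: note that a vertex on a triangle has at most $7$ neighbors in $G^2$, so each $v_i$ has $\ex(v_i)\geq 2$ in the empty partial coloring, and then apply Lemma~\ref{extlemma} with two adjacent triangle vertices in the roles of $u$ and $v$. One detail needs fixing: the order verifying the hypothesis of Lemma~\ref{extlemma} should be by decreasing distance \emph{in $G$} from the edge $v_1v_2$ (as in Lemma~\ref{mainlemma} and the remark following Lemma~\ref{extlemma}), not by distance in $G^2$ --- under the $G^2$-metric a vertex $w$ is only guaranteed one later $G^2$-neighbor (the next vertex on a shortest $G^2$-path), whereas the first two vertices of a shortest $G$-path from $w$ to the edge lie in strictly lower distance classes and are both $G^2$-neighbors of $w$. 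Your fallback through Lemma~\ref{mainlemma} (color $G^2-\{v_1,v_2\}$ first, leaving each of $v_1,v_2$ with at most $6$ colored neighbors) is airtight and by itself suffices.
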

\begin{proof}
Suppose $G$ contains a 3-cycle $uvw$.  
Note that $\ex(u)\geq2$, $\ex(v)\geq2$, and $\ex(w)\geq 2$.
So by Lemma~\ref{extlemma}, we can color $G^2$ from lists of size 8.
\end{proof}

\begin{lemma}
\label{noC4}
If $G$ is an 8-minimal subcubic graph, then $g(G) > 4$.
\end{lemma}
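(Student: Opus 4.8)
The plan is to assume for contradiction that an $8$-minimal subcubic graph $G$ contains a $4$-cycle $C=v_1v_2v_3v_4$, and to derive that $G^2$ is $8$-choosable after all. By Lemmas~\ref{3reg} and~\ref{noC3}, $G$ is $3$-regular with girth $4$, so each $v_i$ has a unique third neighbor $u_i\notin V(C)$. Since every vertex of $C$ has only one neighbor off $C$, no vertex of $C$ is a common neighbor of two vertices outside $C$; hence $G^2-V(C)=(G-V(C))^2$, and by $8$-minimality the latter is $8$-choosable. So, fixing a bad list assignment $L$ for $G^2$, we may fix an $L$-coloring $c$ of $G^2-V(C)$, and it suffices to extend $c$ to $V(C)$. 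Note that $V(C)$ induces a $K_4$ in $G^2$ and that $\deg_{G^2}(v_i)\le 8$ for every $i$.

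First I would dispose of the degenerate configurations. If two opposite third neighbors coincide ($u_1=u_3$ or $u_2=u_4$), or if two consecutive third neighbors are adjacent ($u_iu_{i+1}\in E(G)$ for some $i$), then a one-line count of $N_{G^2}(v_j)$ shows $\deg_{G^2}(v_j)\le 7$ for some $j$. With the empty coloring this gives $\ex(v_j)\ge 2$, while the neighbor $v_{j'}$ of $v_j$ on $C$ has $\ex(v_{j'})\ge 1$; ordering $V(G)$ by decreasing $G$-distance from the edge $v_jv_{j'}$ with $v_{j'},v_j$ last verifies the hypotheses of Lemma~\ref{extlemma} exactly as in the proof of Lemma~\ref{mainlemma}, so $G^2$ is $8$-choosable --- a contradiction.

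In the remaining case all four $u_i$ are distinct and no $u_iu_{i+1}$ is an edge, and a short check then gives $\deg_{G^2}(v_i)=8$ for every $i$. Thus, when we extend $c$ to the $K_4$ on $V(C)$, each $v_i$ has exactly three uncolored $G^2$-neighbors (the other cycle vertices) and five colored ones, so at least three colors are available at $v_i$. Since $V(C)$ induces $K_4$, extending $c$ amounts to choosing a system of distinct representatives for the four available lists, which Hall's theorem supplies \emph{unless} all four available lists coincide --- and then each has size exactly $3$.

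The main obstacle is precisely this tight case, where the four $v_i$ share a common available $3$-set. I would break the coincidence by a single local recoloring of $c$ near the $4$-cycle: recolor one already-colored vertex (for instance a neighbor of some $u_i$ other than $v_i$) so that it repeats the color of an already-colored $G^2$-neighbor of $v_i$ to which it is $G^2$-nonadjacent. This puts a repeated color among the colored neighbors of $v_i$, enlarging its available list to size at least $4$; if the recolored vertex is $G^2$-adjacent to no other $v_j$ --- which one can arrange --- the other three available lists are unchanged, so Hall's condition is restored (equivalently, now $\ex(v_i)\ge 2$ and Lemma~\ref{extlemma} finishes with an order ending $\dots,v_{i-1},v_i$). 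The delicate point, and the crux of the whole lemma, is to show that a valid recoloring of this type always exists: one must rule out that every candidate vertex and every candidate target color is blocked by some already-colored neighbor, using the $G^2$-nonadjacencies forced among the $u_i$ and their neighbors by $3$-regularity and triangle-freeness, with the configurations in which two \emph{opposite} third neighbors $u_i,u_{i+2}$ are adjacent requiring separate, slightly more careful treatment.
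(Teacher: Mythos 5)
Your setup is sound and genuinely different from the paper's: you color everything except the four cycle vertices (correctly noting that $G^2-V(C)=(G-V(C))^2$ here), dispose of the degenerate configurations by a degree count plus Lemma~\ref{extlemma}, and reduce the main case to finding a system of distinct representatives for four lists of size at least $3$ on the $K_4$ induced by $V(C)$, which fails only when all four lists coincide with a common $3$-set. All of that checks out. But the proof then stops exactly where the real work begins: the claim that a valid local recoloring of the already-fixed coloring $c$ always exists is asserted, not proved, and you yourself label it ``the crux of the whole lemma.'' It is not a routine verification. A candidate vertex $z$ near the cycle can have up to $9$ neighbors in $G^2$, as many as $8$ of them colored, against a list of size $8$, so $z$ may admit no alternative color at all; even when it does, the new color must repeat a colored neighbor of some $v_i$ (or leave $L(v_i)$), must not disturb the lists of the other three $v_j$, and the needed $G^2$-nonadjacencies are exactly what fail in the configurations (e.g.\ $u_1u_3\in E(G)$) that you defer to ``separate, slightly more careful treatment.'' As written, the lemma is not established.

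The paper sidesteps this entirely by leaving the eight vertices $u_1,\dots,u_4,v_1,\dots,v_4$ uncolored (Lemma~\ref{mainlemma} permits this) rather than only the four $v_i$. That slack converts the tie-breaking problem into the paper's ``saving a color'' mechanism: if some pair such as $u_1,v_3$ or $u_1,u_2$ is at distance $3$, then $|L(u_1)|+|L(v_3)|>|L(v_1)|$ forces either a repeated color or a color outside $L(v_1)$, giving $\ex(v_1)\ge 2$ and letting Lemma~\ref{extlemma} finish; if no such pair exists, $3$-regularity forces $u_1,u_2,u_4$ to share a neighbor $x$, and $u_2,u_4,x$ form a triangle, contradicting Lemma~\ref{noC3}. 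If you want to keep your SDR framing, the fix is to not commit to a coloring of the $u_i$ in advance --- i.e., adopt the paper's choice of uncolored set --- since recoloring an adversarially given $c$ is much harder than choosing the colors of the $u_i$ yourself.
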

\begin{proof}
Suppose that $G$ is a counterexample.  Let each vertex have a list of size 8.
Observe that if vertex $v$ lies on a 4-cycle, then $\ex(v)\geq 1$.
Note that if $v$ lies on two 4-cycles, then $\ex(v)\geq 2$.
Suppose that $v_1$ lies on two 4-cycles and $v_2$ is adjacent to $v_1$ on some 4-cycle.
Since $\ex(v_2)\geq 1$ and $\ex(v_1)\geq 2$, we can greedily color $G$.
Hence, we assume that no vertex lies on two 4-cycles.
Let $\cyc$ be a 4-cycle in $G$.  Label the vertices of $\cyc$ as $v_1$, $v_2$, $v_3$, $v_4$.
Recall that $G$ is 3-regular (by Lemma~\ref{3reg}).
Let $u_i$ be the neighbor of $v_i$ not on $\cyc$.
We can assume the $u_i$s are distinct, since otherwise either $G$ contains a 3-cycle or some vertex lies on two 4-cycles.
By Lemma~\ref{mainlemma}, we color all vertices except the $u_i$s and $v_i$s. 
Let $L(v)$ denote the list of remaining colors available at each uncolored vertex $v$.

\textit{Case 1:} 
Suppose that $\dist(u_1,v_3)=3$.  
Note that $|L(v_i)|\geq 6$ and $|L(u_i)|\geq 2$.  
We assume that equality holds for $v_1$ (otherwise we discard colors until it does).
Since $|L(u_1)|+|L(v_3)|>|L(v_1)|$, we can choose color $c_1$ for $u_1$ and color $c_2$ for $v_3$
so that $|L(v_1)\setminus\{c_1,c_2\}|\geq 5$.  
Since $\ex(v_2)\geq 1$ and $\ex(v_1)\geq 2$, we can finish the coloring by Lemma~\ref{extlemma} (coloring greedily in the order $u_2$, $u_3$, $u_4$, $v_4$, $v_2$, $v_1$).

\textit{Case 2:} 
Suppose instead that $\dist(u_1,v_3)<3$. 
Vertices $u_1$ and $u_3$ must be adjacent; by symmetry $u_2$ and $u_4$ must be adjacent or we get the result by Case 1.
Now since $u_1$ and $u_3$ are adjacent and $u_2$ and $u_4$ are adjacent (see Figure 2),
we have $|L(v_i)|\geq 7$ and $|L(u_i)|\geq 4$ (we assume that equality holds for the $v_i$s).
Suppose that $\dist(u_1,u_2)=3$.  
Since $|L(u_1)| + |L(u_2)| \geq 4 + 4 > 7=|L(v_1)|$, we can choose color $c_1$ for $u_1$ and color $c_2$ for $u_2$ such that
$|L(v_1)\setminus\{c_1,c_2\}|\geq 6$.  
Since $\ex(v_1)\geq 2$ and $\ex(v_2)\geq 1$, we can finish the coloring.
Hence, we can assume that $\dist(u_1,u_2) < 3$.  

\begin{figure}[!h!bt]
  \label{Lemma7-fig}
  \begin{center}
    \psfrag{V1}{{\normalsize$v_1$}}
    \psfrag{V2}{{\normalsize$v_2$}}
    \psfrag{V3}{{\normalsize$v_3$}}
    \psfrag{V4}{{\normalsize$v_4$}}
    \psfrag{U1}{{\normalsize$u_1$}}
    \psfrag{U2}{{\normalsize$u_2$}}
    \psfrag{U3}{{\normalsize$u_3$}}
    \psfrag{U4}{{\normalsize$u_4$}}
    \includegraphics{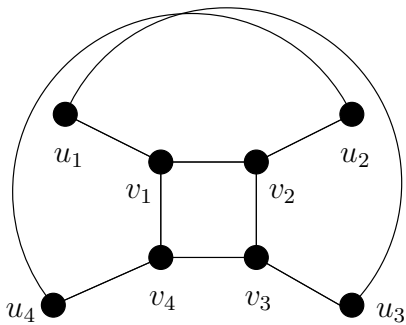}
  \end{center}
\caption{A 4-cycle with vertices $v_1$, $v_2$, $v_3$, $v_4$ and the adjacent
vertices not on the 4-cycle: $u_1$, $u_2$, $u_3$, $u_4$, respectively.  In Case 2 of Lemma~7, we also assume that vertices $u_1$ and $u_3$ are adjacent and that
 vertices $u_2$ and $u_4$ are adjacent.}
\end{figure}

Observe that $u_1$ and $u_2$ cannot be adjacent; otherwise $v_1$ would lie on two 4-cycles.
Thus, $u_1$ and $u_2$ must have a common neighbor.
By symmetry, we can assume that $u_1$ and $u_4$ have a common neighbor. 
Since $d(u_1)=3$ (and we have already accounted for two edges incident to $u_1$), vertices $u_1$, $u_2$, and $u_4$ must have a common neighbor $x$.  However, then $u_2$, $u_4$, and $x$ form a 3-cycle.  By Lemma~\ref{noC3}, this is a contradiction.
\end{proof}

\begin{lemma}
If $G$ is a non-Petersen 8-minimal subcubic graph, 
then $G$ does not contain two 5-cycles that share three consecutive vertices.
\label{no5cyclepath}
\end{lemma}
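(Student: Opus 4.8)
The plan is to assume, for contradiction, that a non-Petersen $8$-minimal subcubic graph $G$ contains two $5$-cycles sharing three consecutive vertices, and to show that this either makes $G^2$ $8$-choosable (contradicting $8$-minimality) or forces $G$ to be the Petersen graph (contradicting the hypothesis). By Lemmas~\ref{3reg},~\ref{noC3}, and~\ref{noC4} we may assume $G$ is $3$-regular with $g(G)\geq 5$. Write the two $5$-cycles as $C_1=v_1v_2v_3v_4v_5$ and $C_2=v_1v_2v_3v_4'v_5'$, sharing the path $v_1v_2v_3$; since $g(G)\geq 5$ the seven vertices $v_1,v_2,v_3,v_4,v_5,v_4',v_5'$ are distinct, and $v_1$ and $v_3$ have all their neighbors among these seven. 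Let $w,a,b,a',b'$ be the third neighbors of $v_2,v_4,v_5,v_4',v_5'$, respectively.

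First I would record the local structure. A short case check shows that $w,a,b,a',b'$ lie outside $\{v_1,\dots,v_5,v_4',v_5'\}$ and that the only coincidences possible among them are $a=b'$ and/or $b=a'$ (any other coincidence, and likewise various edges between these vertices, would create a $3$- or $4$-cycle). Examining $G$-distances, $v_1,v_2,v_3$ are pairwise adjacent in $G^2$ and each of them is adjacent in $G^2$ to all of $v_4,v_5,v_4',v_5'$; moreover $v_4v_5$, $v_5v_5'$ (through $v_1$), $v_5'v_4'$, and $v_4'v_4$ (through $v_3$) are edges of $G^2$, while $v_4v_5'$ is an edge of $G^2$ exactly when $a=b'$ and $v_5v_4'$ is an edge of $G^2$ exactly when $b=a'$. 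Thus $\{v_4,v_5,v_4',v_5'\}$ spans at least the $4$-cycle $v_4v_5v_5'v_4'v_4$ in $G^2$, and the seven config vertices span $K_3$ completely joined to (at least) a $4$-cycle.

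For the reduction, let $H=G[\{v_1,\dots,v_5,v_4',v_5'\}]$. Since $H$ contains an edge and $G$ is $8$-minimal, Lemma~\ref{mainlemma} lets us properly color $G^2-V(H)$ from lists of size $8$; let $L$ denote the lists remaining at the seven uncolored vertices. Counting colored $G^2$-neighbors gives $|L(v_1)|,|L(v_2)|,|L(v_3)|\geq 5$ and $|L(v_4)|,|L(v_5)|,|L(v_4')|,|L(v_5')|\geq 4$ (with larger bounds when $a=b'$ or $b=a'$), and $|L(v_1)|,|L(v_3)|\leq 7$ because each of $v_1,v_3$ has a colored neighbor. In the main case, where $a\neq b'$ and $b\neq a'$, I would choose $\alpha\in L(v_4)\cap L(v_5')$, color $v_4$ and $v_5'$ with $\alpha$, then choose $\beta$ in the (now size-$\geq 3$) set $L(v_5)\cap L(v_4')$ and color $v_5$ and $v_4'$ with $\beta$. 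Now $v_1,v_2,v_3$ have lists of size $\geq 3$ and induce a triangle in $G^2$, so we finish greedily, contradicting $8$-minimality. Equivalently, one can steer the coloring of $G^2-V(H)$ so that the external neighbors of $v_1$ (and those of $v_3$) become monochromatic, making $\ex(v_1),\ex(v_3)\geq 2$, and then invoke Lemma~\ref{extlemma}.

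The hard part will be the degenerate situations that defeat those moves. When an intersection such as $L(v_4)\cap L(v_5')$ is empty, one must fall back on the ``saving a color'' principle of Section~\ref{prelims}: since $v_4,v_5'$ are non-adjacent $G^2$-neighbors of $v_1$ with $|L(v_4)|+|L(v_5')|\geq 8>|L(v_1)|$, a color of $L(v_4)\cup L(v_5')$ avoids $L(v_1)$ and can be used on one of $v_4,v_5'$ without shrinking $L(v_1)$; one then re-balances the smaller list problem that remains (a path joined to a triangle in $G^2$). When exactly one of $a=b'$, $b=a'$ holds, $\{v_4,v_5,v_4',v_5'\}$ spans $K_4-e$ in $G^2$, so one colors the unique non-adjacent pair with a common color and completes the resulting $K_5$, checking that the list sizes meet Hall's condition and re-choosing the common color when they do not. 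When $a=b'$ and $b=a'$ simultaneously the seven vertices span $K_7$ in $G^2$; here I expect that tracing the remaining forced adjacencies, together with $3$-regularity and $g(G)\geq 5$, either still permits the coloring to be completed or forces $G$ to be the Petersen graph — which is exactly why the ``non-Petersen'' hypothesis is needed. Carrying out these degenerate cases, and in particular pinning down the Petersen graph in the last one, is where essentially all the work lies; the list-size bookkeeping above is routine.
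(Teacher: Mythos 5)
Your setup coincides with the paper's: the union of the two $5$-cycles is a $6$-cycle plus a seventh vertex joined to two antipodal vertices of it, you delete the seven configuration vertices and invoke Lemma~\ref{mainlemma}, and your three regimes (neither, exactly one, or both of the coincidences $a=b'$, $b=a'$) are precisely the paper's Cases 1--3. The one case you actually carry out --- both $L(v_4)\cap L(v_5')$ and $L(v_5)\cap L(v_4')$ nonempty --- is correct: two common colors leave the inner triangle with lists of size at least $3$. But you explicitly defer everything else, conceding that ``essentially all the work lies'' there, and the directions you sketch for the deferred cases do not lead to a proof, so this is a genuine gap rather than routine bookkeeping.

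Concretely: (i) when both intersections are empty, two applications of the saving principle cannot rescue the inner triangle: however you distribute two saves among $v_1,v_2,v_3$, the four outer colors can reduce one triangle vertex to a single color that clashes with an adjacent single-color vertex. The paper's Subcase~1.2 instead colors the three inner vertices \emph{first}, choosing their colors so that no two of the outer four are left with only one color, and then finishes the outer four using the disjointness of their lists ($|L'(v_2)|+|L'(v_5)|\ge 5$, etc.); this reordering is the missing idea. (ii) In your ``exactly one coincidence'' case the seven vertices span $K_7-e$ in $G^2$ with guaranteed list sizes only $5,5,5,5,5,4,4$; if all seven lists happen to lie inside one $5$-set, no proper coloring exists at all (a proper coloring of $K_7-e$ needs six colors), so no amount of re-choosing the common color can succeed. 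The paper escapes by uncoloring the extra vertex $u_2$ and exploiting two further non-adjacencies in $G^2$ (its $u_2$ and $u_5$ are at distance $3$ from $v_4$ and $v_1$ respectively) to gain the needed slack. (iii) In the ``both coincidences'' case the seven vertices span $K_7$ with lists of size $5$, which is likewise uncolorable by any direct list argument. The paper does not color this configuration at all: it uses the non-Petersen hypothesis to show that among the chord vertex and the two new common neighbors some pair is at distance $3$, and then applies Cases~1--2 to a \emph{different} pair of $5$-cycles inside the same subgraph. You correctly anticipated that the non-Petersen hypothesis enters here, but the actual mechanism --- pivoting to another pair of $5$-cycles rather than finishing the coloring of the original one --- is the essential step your proposal lacks.
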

\begin{proof}
Suppose $G$ is a counterexample.
Taken together, the two given 5-cycles form a 6-cycle, with one
additional vertex adjacent to two vertices of the 6-cycle. Label the
vertices of the 6-cycle $v_1, v_2, \ldots, v_6$ and label the final
vertex $v_7$ (as in Figure~\ref{lemma8-fig}: Case 1).  Let $v_7$ be adjacent to $v_1$ and $v_4$.
We consider three cases, depending on how many pairs of vertices on the 6-cycle are distance 3 apart.
By Lemma~\ref{mainlemma}, we color all vertices of $G^2$ except the 7 $v_i$s.

\textit{Case 1:} Both $\dist(v_2, v_5)$ and $\dist(v_3, v_6)$ are at least 3.   Let $L(v)$ denote the list of remaining colors available at each uncolored vertex $v$.
In this case, $|L(v_1)|\geq 5$, $|L(v_4)|\geq 5$, $|L(v_7)| \geq5$ and $|L(v_2)|\geq 4$, $|L(v_3)|\geq 4$, $|L(v_5)|\geq 4$, $|L(v_6)|\geq 4$.  We assume equality holds.  We consider two subcases.

Subcase 1.1: $L(v_2) \cap L(v_5) \neq \emptyset$ or $L(v_3) \cap L(v_6) \neq \emptyset$.
Without loss of generality, we may assume that $L(v_2) \cap L(v_5) \neq \emptyset$.  Color $v_2$ and $v_5$ with some color $c_1\in L(v_2)\cap L(v_5)$. 
Since $|L(v_3)\setminus\{c_1\}|+|L(v_6)\setminus\{c_1\}|>|L(v_7)|$, we can 
choose color $c_2$ for $v_3$ and color $c_3$ for $v_6$ such that $|L(v_7)\setminus\{c_1,c_2,c_3\}|\geq 3$.  Greedily color the remaining vertices in the order $v_1$, $v_4$, $v_7$.

Subcase 1.2: $L(v_2) \cap L(v_5) = \emptyset$ and $L(v_3) \cap L(v_6) = \emptyset$. 
\begin{figure}[!h!bt]
  \label{lemma8-fig}
  \begin{center}
    \psfrag{V1}{{\normalsize$v_1$}}
    \psfrag{V2}{{\normalsize$v_2$}}
    \psfrag{V3}{{\normalsize$v_3$}}
    \psfrag{V4}{{\normalsize$v_4$}}
    \psfrag{V5}{{\normalsize$v_5$}}
    \psfrag{V6}{{\normalsize$v_6$}}
    \psfrag{V7}{{\normalsize$v_7$}}
    \psfrag{V8}{{\normalsize$v_8$}}
    \psfrag{V9}{{\normalsize$v_9$}}
    \psfrag{U2}{{\normalsize$u_2$}}
    \psfrag{U5}{{\normalsize$u_5$}}
    \psfrag{U7}{{\normalsize$u_7$}}
    \psfrag{U8}{{\normalsize$u_8$}}
    \psfrag{U9}{{\normalsize$u_9$}}
    \psfrag{X}{{\normalsize$x$}}
    \psfrag{Case1}{Case 1}
    \psfrag{Case2}{Case 2}
    \psfrag{Case3}{Case 3}
    \includegraphics{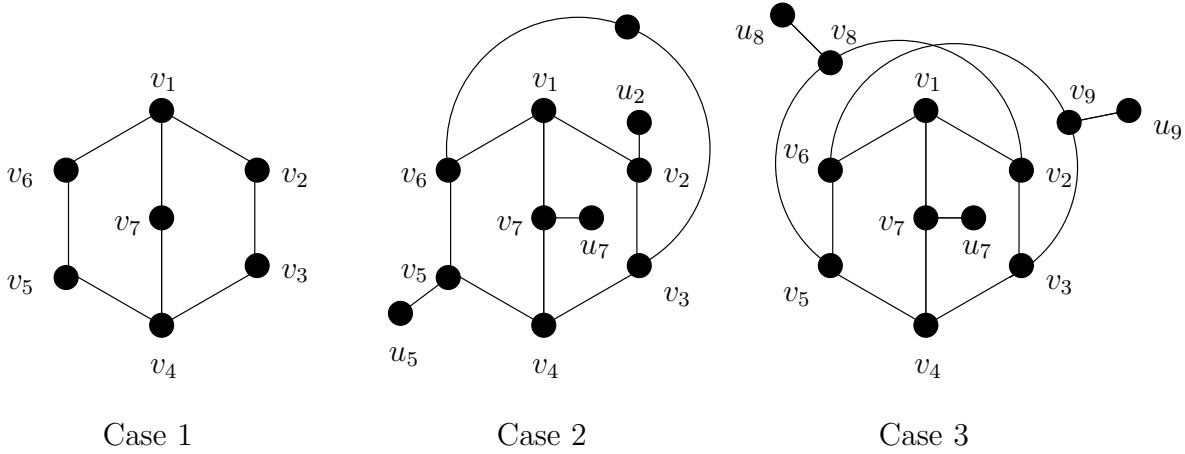}
  \end{center}
\caption{Lemma~8 considers two 5-cycles that share two consecutive edges.  
In Cases~2 and 3 of Lemma~8, we consider additional adjacencies.}
\end{figure}

Color $v_1, v_4, v_7$ so that no two vertices among $v_2, v_3, v_5, v_6$ have only one available color remaining.  Call these new lists $L'(v)$.
Note that $|L'(v_2)| +|L'(v_5)| \geq 5$ and $|L'(v_3)| +|L'(v_6)| \geq 5$. Hence we can color $v_2, v_3, v_5, v_6$.  

\textit{Case 2:} Exactly one of $\dist(v_2, v_5)$ or $\dist(v_3, v_6)$ is 2. 
Without loss of generality, we may assume that $\dist(v_2, v_5)\geq3$ and $\dist(v_3, v_6) = 2$.
Recall from Lemma~\ref{3reg} that $G$ is 3-regular.
Let $u_2$, $u_5$, and $u_7$ be the vertices not yet named that are adjacent to $v_2$, $v_5$, and $v_7$, respectively.
We cannot have $u_2=u_5$, since we have $\dist(v_2,v_5)\geq3$.
Note that $\dist(u_2,v_4)\geq3$ unless $u_2=u_7$.
Similarly, $\dist(u_5,v_1)\geq3$ unless $u_5=u_7$.
Moreover, we cannot have $u_2 = u_7$ or $u_5 = u_7$, since each equality implies that $G$ contains a 4-cycle.
Hence, $\dist(u_2,v_4)= 3$ and $\dist(u_5,v_1)= 3$ (see Figure~\ref{lemma8-fig}: Case 2).  Uncolor vertex $u_2$.
Let $L(v)$ denote the list of remaining available colors at each vertex $v$.
We have $|L(v_1)|\geq 6$, $|L(v_2)|\geq 5$, $|L(v_3)|\geq 6$, $|L(v_4)|\geq 5$,
$|L(v_5)|\geq 4$, $|L(v_6)|\geq 5$, $|L(v_7)|\geq 5$, and $|L(u_2)|\geq 2$.  We assume that equality holds.  We consider two subcases.

Subcase 2.1: $L(u_2) \cap L(v_4) \neq \emptyset$.
Color $u_2$ and $v_4$ with some color $c_1 \in L(u_2) \cap L(v_4)$.  Now choose color $c_2$ for $v_2$ and color $c_3$ for $v_5$ such that $|L(v_3)\setminus\{c_1,c_2,c_3\}|\geq 4$.  Let $L'(v)=L(v)\setminus\{c_1,c_2,c_3\}$.  The new lists satisfy $|L'(v_1)| \geq 3, |L'(v_3)| \geq 4, |L'(v_6)| \geq 2,
|L'(v_7)| \geq 2$.  Greedily color the remaining vertices in the order $v_7$, $v_6$, $v_1$, $v_3$.

Subcase 2.2: $L(u_2) \cap L(v_4) = \emptyset$.
We have two subcases here.  If $L(v_2) \cap L(v_5) \neq \emptyset$, then color $v_2$ and $v_5$
with a common color, and then color $u_2$ and $v_4$ to save a color at $v_3$. Now color the remaining vertices as in Subcase 2.1.
If $L(v_2) \cap L(v_5) = \emptyset$, then color $u_2$ and $v_4$ to save a color at $v_3$.
Now choose colors  for $v_6$ and for $v_7$ such that vertices $v_2$ and $v_5$ each have at least one remaining color.
Let $L'(v)$ denote the list of remaining available colors at each vertex $v$.
Note that $|L'(v_1)| \geq 2$, $|L'(v_3)| \geq 3$, and $|L'(v_2)| + |L'(v_5)| \geq 5$ since
$L(v_2) \cap L(v_5) = \emptyset$.  In each case, we can color $v_1, v_2, v_3, v_5$. 

\textit{Case 3:} Both $\dist(v_2, v_5)$ and $\dist(v_3, v_6)$ are 2.
Then $v_2$ and $v_5$ have a common neighbor, say $v_8$, and $v_3$ and $v_6$ have a common neighbor, say $v_9$ (see Figure~\ref{lemma8-fig}: Case 3).
Let $u_7$, $u_8$, and $u_9$ be the third vertices adjacent to $v_7$, $v_8$, and $v_9$, respectively.
We show that either $\dist(v_7,v_8)=3$ or $\dist(v_7,v_9)=3$ or $\dist(v_8,v_9)=3$.
Note that $\dist(v_7,v_8)=3$ unless $u_7=u_8$.
Similarly, $\dist(v_7,v_9)=3$ unless $u_7=u_9$ and $\dist(v_8,v_9)=3$ unless $u_8=u_9$.
However, we cannot have $u_7=u_8=u_9$, since $G$ is not the Petersen graph.  Hence, by symmetry, assume that $u_7\neq u_8$.
So $\dist(v_7,v_8)=3$.  
In this case we consider two other 5-cycles:
$v_1v_2v_3v_4v_7v_1$ and $v_2v_3v_4v_5v_8v_2$. 
These 5-cycles share three consecutive vertices; furthermore, because $d(v_7,v_8)=3$, we can finish either by Case 1 or Case 2 above.
\end{proof}

\begin{lemma}
\label{no5cycleedge}
If $G$ is a non-Petersen 8-minimal subcubic graph, then $G$ does not contain two 5-cycles that share an edge.
\end{lemma}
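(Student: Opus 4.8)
The plan is to assume $G$ is a counterexample and derive a contradiction by showing that the structure forced by two $5$-cycles sharing an edge cannot occur in an $8$-minimal graph.

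\emph{Reducing to an $8$-vertex configuration.} By Lemmas~\ref{3reg}, \ref{noC3}, \ref{noC4} and \ref{no5cyclepath}, $G$ is $3$-regular, has girth at least $5$, and contains no two $5$-cycles sharing three consecutive vertices. Let $C,C'$ be two $5$-cycles sharing an edge; I would first pin down how they can overlap. If they shared two adjacent edges that would give three consecutive shared vertices, contradicting Lemma~\ref{no5cyclepath}; if they shared two independent edges, then examining common neighbours (and using that a triangle or a $4$-cycle is forbidden) forces $C=C'$; and if they shared a vertex off the common edge, a short girth argument again produces a triangle, a $4$-cycle, or three consecutive shared vertices. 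Hence $C$ and $C'$ meet in exactly one edge $v_1v_2$. Writing $C=v_1v_2v_3v_4v_5v_1$ and $C'=v_1v_2v_6v_7v_8v_1$, the vertices $v_1,\dots,v_8$ are distinct and induce an $8$-cycle $v_5v_4v_3v_2v_6v_7v_8v_1$ together with the chord $v_1v_2$; since $G$ is cubic, each of $v_3,\dots,v_8$ has exactly one neighbour $w_3,\dots,w_8$ outside this set.

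\emph{The local structure.} Next I would record, by short case checks using girth at least $5$ and Lemma~\ref{no5cyclepath}: no $w_i$ equals any $v_j$; among the $w_i$'s the only possible coincidence is $w_4=w_7$; and $d_G(v_3,v_7)=d_G(v_3,v_8)=d_G(v_4,v_6)=d_G(v_4,v_8)=d_G(v_5,v_6)=d_G(v_5,v_7)=3$, with also $d_G(v_4,v_7)=3$ unless $w_4=w_7$ (a shorter path between any of these pairs yields a triangle, a $4$-cycle, or two $5$-cycles sharing three consecutive vertices). Consequently $G^2$ restricted to $\{v_1,\dots,v_8\}$ is \emph{exactly} the graph in which $v_1,v_2$ are adjacent to all other vertices, $\{v_3,v_4,v_5\}$ and $\{v_6,v_7,v_8\}$ are triangles, and the only further edges are $v_3v_6$ and $v_5v_8$ (plus $v_4v_7$ in the exceptional case). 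In particular each of those six distance-$3$ pairs is a non-edge of $G^2$ whose endpoints have a common $G^2$-neighbour among $v_3,v_5,v_6,v_8$.

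\emph{Finishing by a colouring argument.} By Lemma~\ref{mainlemma} I would properly colour $G^2-\{v_3,v_4\}$ from lists of size $8$ and then uncolour $v_5,v_6,v_7,v_8$, so that exactly $v_3,\dots,v_8$ remain uncoloured; these induce in $G^2$ two triangles joined by $v_3v_6$ and $v_5v_8$, each vertex has nonnegative excess, and the ``bottleneck'' vertices $v_4,v_7$ have disjoint uncoloured neighbourhoods $\{v_3,v_5\}$ and $\{v_6,v_8\}$. One colours $v_4$ and $v_7$ (each has an available colour), choosing those colours and applying the saving trick of Section~\ref{prelims} at the distance-$3$ pairs so as to keep the lists of $v_3,v_5,v_6,v_8$ large; since those four vertices form a $4$-cycle in $G^2$ and $C_4$ is $2$-choosable, this completes the colouring, with Lemma~\ref{extlemma} handling any residual ordering and the exceptional subcase $w_4=w_7$ (where $G^2[\{v_3,\dots,v_8\}]$ is the triangular prism) treated by the same scheme with $v_4v_7$ supplying an extra saving pair.

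\emph{Where the difficulty lies.} The girth bookkeeping in the first two steps is routine, Lemma~\ref{no5cyclepath} being invoked precisely to kill the few troublesome coincidences among the $w_i$ and the short distances. The real work is the last step: in the generic configuration every $v_i$ has excess exactly $0$ and each saving raises an excess by only one, so one must juggle two independent savings while simultaneously checking that the lists involved are large enough for the saving trick to fire — the sub-cases in which three or four of the minimal lists $L(v_3),\dots,L(v_8)$ are tight must be dispatched by hand.
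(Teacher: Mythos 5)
Your structural analysis is sound and matches the paper's (up to relabeling): the two $5$-cycles do form an $8$-cycle with one chord, and the distance bookkeeping via girth and Lemma~\ref{no5cyclepath} correctly identifies which pairs among the eight vertices are non-adjacent in $G^2$. The genuine gap is in the colouring endgame, and it is not just a matter of unfinished case analysis: the reduction you set up cannot be completed. By keeping the chord endpoints $v_1,v_2$ coloured and uncolouring only $v_3,\dots,v_8$, you guarantee merely $|L(v_4)|\geq 1$, $|L(v_7)|\geq 1$ and $|L(v_3)|,|L(v_5)|,|L(v_6)|,|L(v_8)|\geq 2$ on the graph consisting of the two $G^2$-triangles $v_3v_4v_5$ and $v_6v_7v_8$ joined by the edges $v_3v_6$ and $v_5v_8$. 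That configuration with those list sizes is not always $L$-colourable: take $L(v_4)=\{1\}$, $L(v_7)=\{2\}$, $L(v_3)=L(v_5)=\{1,3\}$, $L(v_6)=L(v_8)=\{2,3\}$. Then $v_4$ is forced to $1$, so the adjacent pair $v_3,v_5$ must both take $3$; likewise $v_7$ is forced to $2$ and $v_6,v_8$ must both take $3$. So no argument that uses only these list-size lower bounds (saving tricks, $2$-choosability of $C_4$, Lemma~\ref{extlemma}) can finish; indeed with $|L(v_4)|=1$ there is no ``choice'' of colour for $v_4$ to exploit, and after $v_4,v_7$ are coloured the remaining four lists can shrink to size $1$ on adjacent vertices.

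The fix is essentially what the paper does: leave all eight vertices uncoloured. The chord endpoints then carry lists of size at least $6$ against only $7$ uncoloured $G^2$-neighbours, and they serve as the pair $u,v$ of Lemma~\ref{extlemma}; the two savings you need are harvested at a chord endpoint (which sees both ``antipodal'' distance-$3$ pairs), and the argument splits on whether the lists of the two vertices in one such pair intersect. Your instinct that ``two independent savings'' are needed is right, but they must both land on a vertex with enough slack to absorb them, and only the chord endpoints have that slack --- which is exactly the information your reduction discards.
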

\begin{proof}
Suppose $G$ is a counterexample.
By Lemmas~\ref{3reg}-\ref{noC4}, we know that $G$ is 3-regular and that $g(G)\geq5$.  
Taken together, the two given 5-cycles form an 8-cycle, with a chord.
Label the vertices of the 8-cycle $v_1, v_2, \ldots, v_8$ with an edge between $v_1$ and $v_5$.
By Lemmas~\ref{noC4} and~\ref{no5cyclepath}, we know that $\dist(v_2,v_6)=3$.  Similarly, we know that $\dist(v_4,v_8)=3$.
By Lemma~\ref{mainlemma}, we color all vertices of $G^2$ except the 8 $v_i$s.
Let $L(v)$ denote the list of remaining available colors at each vertex $v$.
Note that $|L(v_1)|\geq 6$, $|L(v_2)|\geq 4$, $|L(v_3)|\geq 3$, $|L(v_4)|\geq 4$, $|L(v_5)|\geq 6$, $|L(v_6)|\geq 4$, $|L(v_7)|\geq 3$, and $|L(v_8)|\geq 4$.
We assume that equality holds.

\textit{Case 1:} There exists a color $c_1\in L(v_4)\cap L(v_8)$.  Use color $c_1$ on $v_4$ and $v_8$.
Since $|L(v_2)\setminus\{c_1\}|+|L(v_6)\setminus\{c_1\}|\geq 6$ and $|L(v_5)\setminus\{c_1\}|\geq5$, we can choose color $c_2$ for $v_2$ and color $c_3$ for $v_6$ such that $|L(v_5)\setminus\{c_1,c_2,c_3\}|\geq 4$.  Now since $\ex(v_1)\geq 1$ and $\ex(v_5)\geq 2$, we can finish the coloring by Lemma~\ref{extlemma}.
\begin{figure}[!h!bt]
  \label{Lemma9}
  \begin{center}
    \psfrag{V1}{{\normalsize$v_1$}}
    \psfrag{V2}{{\normalsize$v_2$}}
    \psfrag{V3}{{\normalsize$v_3$}}
    \psfrag{V4}{{\normalsize$v_4$}}
    \psfrag{V5}{{\normalsize$v_5$}}
    \psfrag{V6}{{\normalsize$v_6$}}
    \psfrag{V7}{{\normalsize$v_7$}}
    \psfrag{V8}{{\normalsize$v_8$}}
    \psfrag{U1}{{\normalsize$u_1$}}
    \psfrag{U2}{{\normalsize$u_2$}}
    \psfrag{U3}{{\normalsize$u_3$}}
    \psfrag{U4}{{\normalsize$u_4$}}
    \psfrag{U5}{{\normalsize$u_5$}}
    \psfrag{X}{{\normalsize$x$}}
    \psfrag{Lemma9}{\hspace{.15in}(a)}
    \psfrag{Lemma10}{\hspace{.20in}(b)}
    \includegraphics{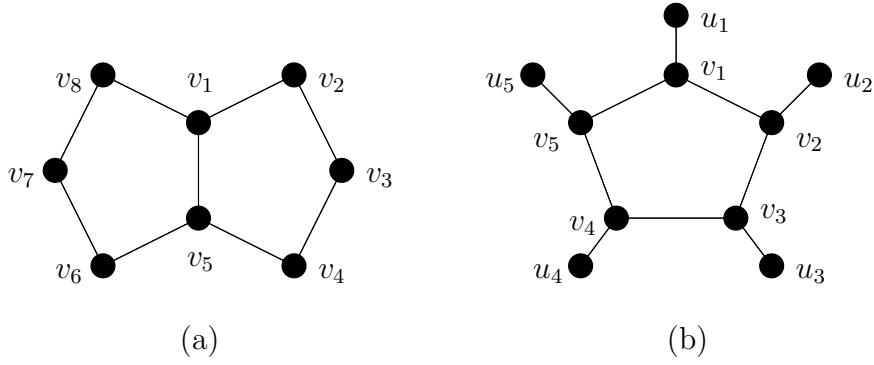}
  \end{center}
\caption{(a) Lemma~9 considers two 5-cycles that share an edges.  (b) Lemma~10 considers a single 5-cycle.}
\end{figure}

\textit{Case 2:} $L(v_4)\cap L(v_8)=\emptyset$.  We can choose color $c_1$ for $v_2$ and color $c_2$ for $v_6$ such that $|L(v_5)\setminus\{c_1,c_2\}|\geq 5$.  
Note that now $\ex(v_5)\geq 1$.
Now color $v_3$ and $v_7$ arbitrarily with colors from their lists; call them $c_3$ and $c_4$, respectively.
Since $L(v_4)\cap L(v_8)=\emptyset$, the remaining lists for $v_4$ and $v_8$ have sizes summing to at least 4; call these lists $L'(v_4)$ and $L'(v_8)$.  
If $|L'(v_4)|\geq 3$, then $\ex(v_4)\geq |L'(v_4)|-1=2$, so by Lemma~\ref{extlemma} we can finish the coloring.
Similarly, if $|L'(v_8)|\geq 3$, then $\ex(v_8)\geq |L'(v_8)|-1=2$, so by Lemma~\ref{extlemma} we can finish the coloring.
So assume that $|L'(v_4)|=|L'(v_8)|=2$.  Arbitrarily color $v_1$ from its list; call the color $c_3$.
Since $L(v_4)\cap L(v_8)=\emptyset$, either $|L'(v_4)\setminus\{c_3\}|=2$ or $|L'(v_8)\setminus\{c_3\}|=2$.
In the first case, $\ex(v_4)\geq 2$; in the second case, $\ex(v_8)\geq 2$.
In either case, we can greedily finish the coloring by Lemma~\ref{extlemma}.
\end{proof}

\begin{lemma}
If $G$ is a non-Petersen 8-minimal subcubic graph, then $g(G) > 5$.
\label{noC5}
\end{lemma}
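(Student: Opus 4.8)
Assume $G$ is a counterexample, so $G$ is a non-Petersen $8$-minimal subcubic graph containing a $5$-cycle $C = v_1v_2v_3v_4v_5$. By Lemmas~\ref{3reg}--\ref{noC4} we know $G$ is $3$-regular with $g(G)\geq 5$, and by Lemmas~\ref{no5cyclepath} and~\ref{no5cycleedge} no two $5$-cycles of $G$ share an edge or three consecutive vertices. Let $u_i$ be the third neighbor of $v_i$ (the one off $C$); since $g(G)\geq 5$ and $C$ has no chord, the $u_i$ are five distinct vertices. The plan is to use Lemma~\ref{mainlemma} to precolor everything except the $v_i$ and possibly a few $u_i$, and then to finish the remaining handful of central vertices by a case analysis driven by Lemma~\ref{extlemma}, exactly as in the previous lemmas.

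The key structural observation to exploit is the $\dist(u_i,v_j)$ values for non-adjacent pairs on $C$. For each $i$, the two ``opposite'' vertices to $v_i$ on $C$ are $v_{i+2}$ and $v_{i-2}$ (indices mod $5$); I would argue that for at least one such pair $\dist(u_i, v_{i+2}) = 3$, since an equality like $\dist(u_i,v_{i+2}) < 3$ forces $u_i$ and $u_{i+2}$ to be adjacent or to have a common neighbor, and having too many of these coincidences simultaneously either creates a short cycle (contradicting $g(G)\geq 5$ or Lemma~\ref{noC3}/\ref{noC4}), creates two $5$-cycles sharing an edge or a path of three vertices (contradicting Lemma~\ref{no5cyclepath} or~\ref{no5cycleedge}), or forces the local structure to close up into the Petersen graph. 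Once I have, say, $\dist(u_1,v_3)=3$, I color all of $G^2$ except the five $v_i$'s (and if needed uncolor $u_1$), compute the residual list sizes — roughly $|L(v_i)|\geq 9 - (\text{degree of }v_i\text{ in }G^2\text{ restricted to uncolored vertices})$, which on a clean $5$-cycle gives lists of size $\geq 4$ or $\geq 5$ — and then use the ``saving a color'' trick: since $|L(u_1)| + |L(v_3)| > |L(v_2)|$ (or the analogous inequality at whichever $v_j$ is common to $u_1$ and $v_3$ in $G^2$), I can pick colors for $u_1$ and $v_3$ that leave $\ex$ large at two adjacent uncolored vertices, and then greedily finish by Lemma~\ref{extlemma} in order of decreasing distance from that edge.

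I expect the main obstacle to be the bookkeeping in the sub-case where \emph{every} opposite pair has $\dist(u_i,v_{i+2}) < 3$ — that is, where all five ``diagonal'' distances are short. This is precisely the near-Petersen regime: the $u_i$'s must pair up with common neighbors in a highly constrained way, and I would need to show that any such configuration is either the Petersen graph itself (excluded by hypothesis) or contains a forbidden short cycle or a forbidden pair of overlapping $5$-cycles. Ruling out the Petersen graph cleanly here is the delicate part, since the Petersen graph is vertex- and edge-transitive and locally looks exactly like ``five $5$-cycles glued around one''; I would pin it down by counting: once enough adjacencies among the $u_i$ are forced, the graph on $\{v_i\}\cup\{u_i\}$ already has $10$ vertices, is $3$-regular, and has girth $5$, hence must be the Petersen graph, at which point minimality and the non-Petersen hypothesis give the contradiction. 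In the remaining (generic) cases the excess computations are routine and mirror Cases~1--2 of Lemma~\ref{no5cyclepath} and Lemma~\ref{no5cycleedge} almost verbatim.
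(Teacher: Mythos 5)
Your setup matches the paper's: precolor everything outside $\{u_i\}\cup\{v_i\}$ via Lemma~\ref{mainlemma}, pin down the distances, and finish with saving arguments and Lemma~\ref{extlemma}. But you misplace where the difficulty lies, and the case you do not address is the one that actually requires a new idea. First, the ``near-Petersen regime'' you flag as the main obstacle is vacuous at this point: each \emph{individual} coincidence $\dist(u_i,v_{i+2})<3$ or $\dist(u_i,u_{i+1})<3$ already yields either a $4$-cycle (if the two vertices are adjacent) or a second $5$-cycle sharing an edge or three consecutive vertices with $C$ (if they have a common neighbor), so Lemmas~\ref{noC4}, \ref{no5cyclepath} and~\ref{no5cycleedge} force $\dist(u_i,v_{i+2})=\dist(u_i,v_{i+3})=\dist(u_i,u_{i+1})=3$ for \emph{every} $i$ outright; the Petersen exclusion was already spent inside Lemma~\ref{no5cyclepath} and is not needed again here. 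There is no case analysis over ``how many diagonals are short.''

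The genuine gap is in your endgame. One application of the saving inequality $|L(u_1)|+|L(v_3)|>|L(v_2)|$ only guarantees $\ex(v_2)\geq 1$ at a single vertex; Lemma~\ref{extlemma} needs $\ex\geq 2$ at one uncolored vertex and $\ex\geq 1$ at an adjacent one, and with all ten central vertices uncolored every vertex starts at excess $0$. To accumulate that much excess you must combine saves, and the ``use a common color on two nonadjacent neighbors'' branch is the one that saves at two vertices at once ($u_1$ and $v_3$ are jointly nonadjacent to both $v_1$ and $v_2$ in $G^2$, and likewise $u_1,u_2$). When $L(u_i)\cap L(u_{i+1})=\emptyset$ and $L(u_i)\cap L(v_{i+2})=L(u_i)\cap L(v_{i+3})=\emptyset$ for all $i$, that branch is unavailable everywhere, and the ``color outside $L(v_2)$'' branch does not obviously stack up to the required excesses. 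The paper handles exactly this situation by a completely different device: it shows via Hall's theorem that the ten lists admit a system of distinct representatives, so all ten vertices can be given pairwise distinct colors. Your proposal contains no substitute for this step, and also overcounts the residual lists (leaving only the five $v_i$ uncolored gives each $v_i$ a list of size only about $3$ on a copy of $K_5$, which is why the $u_i$ must be left uncolored as well).
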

\begin{proof}
Suppose $G$ is a counterexample.  
By Lemmas \ref{3reg}-\ref{noC4}, we know that $G$ is 3-regular and that $g(G)=5$.  
Let $v_1v_2v_3v_4v_5v_1$ be a 5-cycle and let $u_i$ be the neighbor of vertex $v_i$ not on the 5-cycle.

By Lemma~\ref{mainlemma}, we can greedily color all vertices except the $u_i$s and $v_i$s.  
Let $L(v)$ denote the list of remaining available colors at each vertex $v$.
Note that $|L(u_i)|\geq 2$ and $|L(v_i)|\geq 6$.  
We assume that equality holds for the $v_i$s.
By Lemma~\ref{no5cyclepath}, we know that $\dist(u_i,v_{i+2})=\dist(u_i,v_{i+3})=3$ for all $i$ (subscripts are modulo 5).  
By Lemma~\ref{no5cycleedge}
we also know that $\dist(u_i,u_{i+1})=3$.  

\textit{Case 1:} There exists a color $c_1\in L(u_1)\cap L(v_3)$.
Use $c_1$ on $u_1$ and $v_3$.
Greedily color vertices $u_2,u_3,u_4$; call these colors $c_2,c_3,c_4$, respectively.
Now $|L(v_1)\setminus\{c_1,c_2\}|=4$, $|L(v_2)\setminus\{c_1,c_2,c_3\}|\geq 3$, and $|L(u_5)|\geq2$.
We can choose color $c_5$ for $u_5$ and color $c_6$ for $v_2$ such that $|L(v_1)\setminus\{c_1,c_2,c_5,c_6\}|\geq 3$.  
Now greedily color the remaining vertices in the order $v_4,v_5,v_1$.

\textit{Case 2:} There exists a color $c_1 \in L(u_1)\cap L(u_2)$.  Use color $c_1$ on $u_1$ and $u_2$.  Now $|L(v_5)\setminus\{c_1\}| + |L(u_3)| > |L(v_2)\setminus\{c_1\}|$, so we can choose color $c_2$ for $v_5$ and color $c_3$ for $u_3$ so that excess($v_2)\geq 2$.  Note that excess($v_1)\geq 1$.  Hence, after we greedily color $u_5$, we can extend the partial coloring to the remaining uncolored vertices by Lemma~\ref{extlemma}.

\textit{Case 3:} $L(u_i)\cap L(u_{i+1})=\emptyset$ and $L(u_i)\cap L(v_{i+2})=\emptyset$ for all $i$.  By symmetry, we can assume $L(u_i)\cap L(v_{i+3})=\emptyset$ for all $i$.  
We now show that we can color each vertex with a distinct color.  Suppose not.

By Hall's Theorem~\cite{IGT}, there exists a subset of the uncolored vertices $V_1$ such that $|\cup_{v\in V_1}L(v)|<|V_1|$.
Recall that $|L(u_i)|\geq 2$ and $|L(v_i)|=6$ for all $i$.
Clearly, $|V_1| > 2$.  If $|V_1|\leq 6$, then $V_1\subseteq\{u_1,u_2,u_3,u_4,u_5\}$.
Any three $u_i$s contain a pair $u_j,u_{j+1}$; their lists are disjoint, so $|\cup_{v\in V_1}L(v)|\geq |L(u_j)|+|L(u_{j+1})|\geq 4$.  If $|V_1|=5$, then $V_1 =\{u_1,u_2,u_3,u_4,u_5\}$.  However, each color appears on at most two $u_i$s, hence $|\cup_{v\in V_1}L(v)|\geq 10/2 = 5$.
So say $|V_1|\geq 7$.  The Pigeonhole principle implies that $V_1$ must contain a pair $u_i, v_{i+2}$.  Since lists $L(u_i)$ and $L(v_{i+2})$ are disjoint, we have $|\cup_{v\in V_1}L(v)|\geq |L(u_i)| + |L(v_{i+2})| \geq 2 + 6 = 8$.  Hence, $|V_1| \geq 9$.  Now $V_1$ must contain a triple $u_i,u_{i+1},v_{i+3}$.  Since their lists are pairwise disjoint, we get $|\cup_{v\in V_1}L(v)|\geq |L(u_i)|+|L(u_{i+1})|+|L(v_{i+3})| \geq 2 + 2 + 6 = 10$.
This is a contradiction.  Thus, we can finish the coloring.
\end{proof}

Now we prove that if $G$ is 8-minimal, then $G$ does not contain a 6-cycle.  
\begin{lemma}
\label{noC6}
If $G$ is a non-Petersen 8-minimal subcubic graph, then $g(G) > 6$.
\end{lemma}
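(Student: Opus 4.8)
The plan is to assume $G$ is a counterexample and derive a contradiction. By Lemmas~\ref{3reg}--\ref{noC5} we know $G$ is $3$-regular and $g(G)=6$. Fix a $6$-cycle $v_1v_2\cdots v_6v_1$, with indices read modulo $6$. Since $g(G)=6$ this cycle has no chord (a chord $v_iv_{i+2}$ would give a triangle, a chord $v_iv_{i+3}$ a $4$-cycle), so each $v_i$ has a unique third neighbour $u_i$, and $u_i$ lies off the cycle. The $u_i$ are pairwise distinct: if $u_i=u_j$ with $i\neq j$ then $v_i$ and $v_j$ share a neighbour, creating a cycle of length at most $5$. Put $S=\{v_1,\dots,v_6,u_1,\dots,u_6\}$. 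Two structural facts about $G^2$ restricted to $S$ are all I would use: the $v_i$ induce the octahedron $K_{2,2,2}$ whose non-edges are exactly the three pairs $v_iv_{i+3}$ (any $v_i$--$v_{i+3}$ path of length at most $2$ in $G$ would create a cycle of length at most $5$, while every other pair of cycle vertices is at distance at most $2$ along the cycle); and each $u_i$ is $G^2$-adjacent to $v_{i-1},v_i,v_{i+1}$.

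Next I would colour, via Lemma~\ref{mainlemma}, every vertex of $G^2$ outside $S$, writing $L(w)$ for the list of colours still available at each $w\in S$. Since $G$ is subcubic, $\deg_{G^2}(w)\leq9$ for every $w$, so the number of already-coloured $G^2$-neighbours of $w$ is at most $9-\deg_{G^2[S]}(w)$ and hence $|L(w)|\geq\deg_{G^2[S]}(w)-1$. Two consequences follow. (a) Each $v_i$ has exactly $4$ neighbours inside the octahedron and some number $k_i\geq3$ of neighbours among $u_1,\dots,u_6$ (namely at least $u_{i-1},u_i,u_{i+1}$), so $|L(v_i)|\geq 3+k_i$. (b) Each $u_i$ has at least $3$ neighbours among $v_1,\dots,v_6$, so $|L(u_i)|\geq\deg_H(u_i)+2$, where $H=G^2[\{u_1,\dots,u_6\}]$.

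The rest of the argument finishes the colouring of $S$, and the whole point is the order in which the remaining vertices are coloured. First colour $u_1,\dots,u_6$ greedily, in any order: by (b), when $u_i$ is coloured its list still has more than $\deg_H(u_i)$ colours, hence more colours than $u_i$ has already-coloured neighbours in $H$, so some colour is available. Now only the six $v_i$ are uncoloured; they induce $K_{2,2,2}$, and by (a) the residual list $L'(v_i)$ has size at least $(3+k_i)-k_i=3$, since colouring $u_1,\dots,u_6$ removed at most the $k_i$ colours of $v_i$'s $u$-neighbours from $L(v_i)$. It remains to observe that $K_{2,2,2}$ is $3$-choosable: if the two endpoints of some non-edge $v_iv_{i+3}$ share a colour in their residual lists, give it to both, leaving a $C_4$ with residual lists of size at least $2$, which is $2$-colourable; otherwise the two endpoints of every non-edge have disjoint residual lists, so every proper colouring of $K_{2,2,2}$ uses six distinct colours, and a system of distinct representatives of the six residual lists exists by Hall's theorem, because any subset of at least four of these six lists (each of size at least $3$) contains both lists of some disjoint pair and therefore has union of size at least $6$. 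This colours all of $S$, contradicting $8$-minimality.

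I expect the main obstacle to be the setup rather than any computation: one must colour the pendants $u_i$ \emph{before} the cycle vertices (colouring the octahedron first can wipe out a size-$2$ list of some $u_i$), and one must carry the exact inequality $|L(w)|\geq\deg_{G^2[S]}(w)-1$, which is precisely what leaves the octahedron with lists of size $3$; once those two points are in place, only the elementary fact that $K_{2,2,2}$ is $3$-choosable remains.
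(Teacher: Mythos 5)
Your proof is correct, and its core is the same as the paper's: after colouring everything outside a neighbourhood of the $6$-cycle, the six cycle vertices induce $C_6^2\cong K_{2,2,2}$ in $G^2$, each retains a list of size at least $3$, and $K_{2,2,2}$ is $3$-choosable. Two differences are worth noting. First, the paper does not keep the pendant neighbours $u_1,\dots,u_6$ uncoloured at all: it colours every vertex off the cycle via Lemma~\ref{mainlemma}, and since each $v_i$ has $4$ uncoloured $G^2$-neighbours on the cycle it has at most $9-4=5$ coloured ones, leaving $8-5=3$ colours directly. Your detour of leaving the $u_i$ uncoloured, tracking $|L(w)|\ge\deg_{G^2[S]}(w)-1$, and colouring the $u_i$ first arrives at exactly the same bound of $3$, so it is sound but buys nothing over the shorter route. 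Second, your proof that $K_{2,2,2}$ is $3$-choosable settles the case where every antipodal pair has disjoint lists by Hall's theorem (a system of distinct representatives exists because any four of the six lists contain a disjoint pair with union of size at least $6$), whereas the paper's Claim does an explicit greedy case analysis there; both work, and the paper itself uses the same Hall-type argument in Case~3 of Lemma~\ref{noC5}. One small wording nit: in your shared-colour case you should say the remaining $C_4$ is colourable from its residual lists of size $2$ because even cycles are $2$-choosable, rather than that it is ``$2$-colourable.''
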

\begin{proof}
Let $G$ be a counterexample.
By Lemma~\ref{noC5}, we know that $g(G) > 5$.  
Hence, a counterexample must have girth 6.
We show how to color $G$ from lists of size 8.
First, we prove that if $H=C_6$, then $\chi_l(H^2)=3$.
Our plan is to first color all vertices except those on the 6-cycle, then color the vertices of the 6-cycle.

{\bf Claim:} If $H=C_6$, then $\chi_l(H^2)=3$.

Label the vertices $v_1$, $v_2$, $v_3$, $v_4$, $v_5$, $v_6$ in succession.
Let $L(v)$ denote the list of available colors at each vertex $v$.
We consider separately the cases where $L(v_1)\cap L(v_4) \neq \emptyset$ and where $L(v_1)\cap L(v_4)= \emptyset$.

\textit{Case 1:}
There exists a color $c_1\in L(v_1)\cap L(v_4)$.  Use color $c_1$ on $v_1$ and $v_4$.  
Note that $|L(v_i)\setminus\{c_1\}|\geq 2$ for each $i\in \{2,3,5,6\}$.
If there exists a color $c_2\in (L(v_2)\cap L(v_5)) \setminus\{c_1\}$, then use color $c_2$ on $v_2$ and $v_5$.  Now greedily color $v_3$ and $v_6$.
So suppose there is no color in $(L(v_2)\cap L(v_5)) \setminus\{c_1\}$.  Color $v_3$ arbitrarily; call it color $c_3$.
Either $|L(v_2)\setminus\{c_1,c_3\}|\geq 2$ or $|L(v_5)\setminus\{c_1,c_3\}|\geq 2$.
In the first case, greedily color $v_5, v_6, v_2$.
In the second case, greedily color $v_2, v_6, v_5$.

\textit{Case 2:}
$L(v_1)\cap L(v_4)=\emptyset$.  By symmetry, we assume ${L(v_2)\cap L(v_5)=\emptyset}$ and ${L(v_3)\cap L(v_6)=\emptyset}$.
Color $v_1$ arbitrarily; call it color $c_1$.  
If there exists $i$ such that $|L(v_i)\setminus\{c_1\}|=2$, then color $v_4$ from $c_2\in L(v_4)\setminus L(v_i)$; otherwise color $v_4$ arbitrarily.
Let $L'(v_j) = L(v_j) \setminus \{c_1,c_2\}$ for all $j\in \{2,3,5,6\}$.  Note that $|L'(v_2)| + |L'(v_5)|\geq 4$ and $|L'(v_3)| + |L'(v_6)|\geq 4$.
Also, note that there is at most one $k$ in $\{2,3,5,6\}$ such that $|L'(k)|=1$.  So by symmetry we consider two subcases.

Subcase 2.1: $|L'(v_j)|\geq 2$ for every $j\in \{2,3,5,6\}$.  We can finish as in case 1 above. 

Subcase 2.2: $|L'(v_2)| = 1$, $|L'(v_3)|\geq 2$, $|L'(v_6)|\geq 2$, and $|L'(v_5)|\geq 3$.
We color greedily in the order $v_2$, $v_3$, $v_6$, $v_5$.

This finishes the proof of the claim; now we prove the lemma.

Let $u$ and $v$ be adjacent vertices on a 6-cycle $\cyc$.  
By Lemma~\ref{mainlemma}, color all vertices except the vertices of $\cyc$.
Since $g(G)=6$, $\cyc$ has no chords.  Similarly, no two vertices of $\cyc$ have a common neighbor not on $\cyc$.  
Note that each vertex of $\cyc$ has at least three available colors.  Hence, by the Claim we can finish the coloring.
\end{proof}

The fact that $\chi_l(C_6^2)=3$ is a special case of a theorem by Juvan, Mohar, and \v{S}krekovski~\cite{juvan}.  
They showed that for any $k$, if $G=C_{6k}$, then $\chi_l(G^2) = 3$.
Their proof uses algebraic methods and is not constructive.
This fact is also a special case of a result by Fleischner and Stiebitz~\cite{fleischner}; their result also relies on algebraic methods.

\begin{lemma}
Let $\cyc$ be a shortest cycle in a non-Petersen 8-minimal subcubic graph $G$.  If $v$ and $w$ are each distance 1 from $\cyc$, then $v$ and $w$ are nonadjacent.
\end{lemma}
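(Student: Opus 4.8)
The plan is to reduce everything to the girth bound $g(G)\ge 7$ and then run a short counting argument about how a shortest cycle can interact with a path of length at most $3$. First I would record the setup: by Lemma~\ref{noC6} we have $g(G)>6$, so the shortest cycle $\cyc$ has length $g:=g(G)\ge 7$. Suppose, for contradiction, that $v$ and $w$ are adjacent and each is at distance $1$ from $\cyc$. Fix a neighbor $v'$ of $v$ on $\cyc$ and a neighbor $w'$ of $w$ on $\cyc$. Since $v$ and $w$ are at distance exactly $1$ (not $0$) from $\cyc$, neither lies on $\cyc$; since $v\sim w$ we have $v\ne w$; and $v'\ne v$, $w'\ne w$, $v'\ne w$, $v\ne w'$ because $v',w'\in V(\cyc)$ while $v,w\notin V(\cyc)$.

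Next I would dispose of the degenerate case $v'=w'$: then $v'$ has the two neighbors $v,w$ with $v\sim w$, so $v'vw$ is a triangle, contradicting $g\ge 7$. Hence $v'\ne w'$, the four vertices $v',v,w,w'$ are distinct, and $P:=v'\,v\,w\,w'$ is a path of length $3$ whose interior vertices $v,w$ avoid $\cyc$; in particular $P$ is internally disjoint from $\cyc$.

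Now the key step. Let $d$ denote the distance from $v'$ to $w'$ measured along $\cyc$, so $1\le d\le \lfloor g/2\rfloor$ and the two arcs of $\cyc$ joining $v'$ to $w'$ have lengths $d$ and $g-d$. Concatenating $P$ with the arc of length $d$ gives a cycle of length $d+3$, and concatenating $P$ with the arc of length $g-d$ gives a cycle of length $g-d+3$; both are genuine cycles precisely because $P$ is internally disjoint from $\cyc$ and $v'\ne w'$. Since $\cyc$ is a shortest cycle, both of these lengths are at least $g$, so $d+3\ge g$ and $g-d+3\ge g$. The second inequality forces $d\le 3$, while the first forces $d\ge g-3\ge 4$, a contradiction. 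This completes the proof.

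I do not expect a serious obstacle here; the only step that needs genuine care is verifying that the two closed walks built from $P$ and the arcs of $\cyc$ are honest cycles, which is exactly where the distinctness bookkeeping lives and where the hypothesis ``distance $1$'' (rather than ``distance $0$'') is used. The conceptual content is merely that a path of length at most $3$ between two vertices of a shortest cycle would have to rejoin the cycle near-antipodally, which is impossible once the girth is at least $7$.
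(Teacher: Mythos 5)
Your proposal is correct and follows essentially the same route as the paper: both arguments splice the length-3 path through the two off-cycle vertices with an arc of the shortest cycle $\cyc$ to produce a cycle of length at most $3+\lfloor g/2\rfloor<g$, contradicting minimality once $g\geq 7$ is known from Lemma~\ref{noC6}. Your version merely adds some explicit bookkeeping (the $v'=w'$ triangle case and the verification that the concatenation is a genuine cycle) that the paper leaves implicit.
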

\begin{proof}
Let $\cyc$ be a shortest cycle in $G$.  Lemma~\ref{noC6} implies that $|V(\cyc)|\geq 7$.
Let $v_1,v_2,\ldots,v_k$ be the vertices of $\cyc$.  
Recall that $G$ is 3-regular.
Let $u_i$ be the neighbor of $v_i$ that is not on $\cyc$.  
Suppose that there exists $u_i$ adjacent to $u_j$.
Let $d$ be the distance from $v_i$ to $v_j$ along $\cyc$.  By combining the path $v_iu_iu_jv_j$ with the
shortest path along $\cyc$ from $v_i$ to $v_j$, we get a cycle of length $3+d\leq 3+\floor{|V(\cyc|)/2} < |V(\cyc)|$.  This contradicts the
fact that $\cyc$ is a shortest cycle in $G$.
\end{proof}

\begin{figure}[!h!bt]
  \begin{center}
    \psfrag{V1}{{\normalsize$v_{i-1}$}}
    \psfrag{V2}{{\normalsize$v_{i}$}}
    \psfrag{V3}{{\normalsize$v_{i+1}$}}
    \psfrag{V4}{{\normalsize$v_{i+2}$}}
    \psfrag{U1}{{\normalsize$u_{i-1}$}}
    \psfrag{U2}{{\normalsize$u_{i}$}}
    \psfrag{U3}{{\normalsize$u_{i+1}$}}
    \psfrag{U4}{{\normalsize$u_{i+2}$}}
    \includegraphics{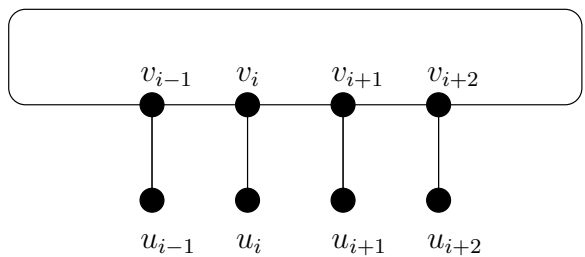}
  \end{center}
\caption{In the proof of Theorem~1, we frequently consider four consecutive
vertices on a cycle and their neighbors off the cycle.}
\end{figure}

We are now ready to prove Theorem~\ref{mainthm}. 

\begin{oneshot1}
If $G$ is a non-Petersen subcubic graph, then $\chi_l(G^2)\leq 8$.
\end{oneshot1}
\begin{proof}
Let $G$ be a counterexample.  By Lemma~\ref{3reg}, we know that $G$ is 3-regular.
By Lemma~\ref{noC6}, we know that $G$ has girth at least 7.
Let $\cyc$ be a shortest cycle in $G$.  
Let $v_1,v_2,\ldots, v_k$ be the vertices of $\cyc$.  Let $u_i$ be the neighbor of $v_i$ that is not on $\cyc$.  
Let $H$ be the union of the $v_i$s and the $u_i$s.
By Lemma~\ref{mainlemma}, we can color $G^2-H$.
Let $L(v)$ denote the list of available colors at each vertex $v$.
Note that $|L(v_i)|\geq 6$ and $|L(u_i)|\geq 2$ for all $i$.  We assume that equality holds.  In each of the following cases, we assume that none of the cases preceding it hold.

\textbf{Case 1}: If we can choose color $c_1$ for $u_i$ and color $c_2$ for $u_{i+1}$ such that $|L(v_i)\setminus\{c_1,c_2\}|\geq 5$ and $|L(v_{i+1})\setminus\{c_1,c_2\}|\geq 5$,
then we can extend the coloring to all of $G^2$.

Use colors $c_1$ and $c_2$ on $u_i$ and $u_{i+1}$.  Since $|L(u_{i-1})| = 2$ and $|L(v_{i+2})\setminus\{c_2\}|\geq 5$
and $|L(v_i)\setminus\{c_1,c_2\}|\geq5$, we can choose color $c_3$ for $u_{i-1}$ and color $c_4$ for $v_{i+2}$ so that $|L(v_i)\setminus\{c_1,c_2,c_3,c_4\}|\geq 4$.  
Color $u_{i+2}$ arbitrarily.
Now since $\excess(v_{i+1})\geq 1$ and $\excess(v_i)\geq 2$, we can greedily finish the coloring by Lemma~\ref{extlemma}.

\textbf{Case 2}: If we can choose color $c_1$ for $u_i$ such that $|L(v_i)\setminus\{c_1\}| = 6$, then we can extend the coloring to all of $G$.

Use color $c_1$ on $u_i$.  Since $|L(u_{i-1})|=2$ and $|L(v_{i+1})\setminus\{c_1\}|\geq 5$ and $|L(v_{i-1})\setminus\{c_1\}|\geq 5$, we can chose color $c_2$ for $u_{i-1}$ and
color $c_3$ for $v_{i+1}$ such that $|L(v_{i-1})\setminus\{c_1,c_2,c_3\}|\geq 4$.  If $c_2=c_3$, then we use $c_2$ on vertices $u_{i-1}$ and $v_{i+1}$; 
Now $\ex(v_{i-1})\geq 1$ and $\ex(v_i)\geq 2$.  So after we greedily color $u_{i+1}$, we can finish by Lemma~\ref{extlemma}.
Hence, we can assume $c_2\neq c_3$.
Note that either $c_2\not\in L(v_{i-1})$ or $c_3\not\in L(v_{i-1})$.
If $c_2\notin L(v_{i-1})$, then use $c_2$ on $u_{i-1}$; now we can finish by Case 1.  Hence, we can assume $c_3\notin L(v_{i-1})$.  Use $c_3$ on $v_{i+1}$.
Greedily color $u_{i+1}$ and $u_{i+2}$; call these colors $c_4$ and $c_5$, respectively.  We may assume that $|L(v_i)\setminus\{c_1,c_3,c_4\}| = 4$ (otherwise, we
can finish greedily as above).  We also know that $|L(u_{i-1})|=2$ and $|L(v_{i+2})\setminus\{c_3,c_4,c_5\}|\geq 3$.  Hence, we can choose color $c_6$ for $u_{i-1}$
and color $c_7$ for $v_{i+2}$ such that $|L(v_i)\setminus\{c_1,c_3,c_4,c_6,c_7\}|\geq 3$.  
Now since $\excess(v_{i-1})\geq 1$ and $\excess(v_i)\geq 2$, we can finish by Lemma~\ref{extlemma}.

\textbf{Case 3}: If we can choose color $c_1$ for $u_{i+1}$ such that $|L(v_i)\setminus\{c_1\}| = 6$, then we can extend the coloring to all of $G$.

Use color $c_1$ on $u_{i+1}$.  Since $|L(u_i)|=2$ and $|L(v_{i+2})\setminus\{c_1\}|\geq 5$ and $|L(v_{i+1}\setminus\{c_1\})|\geq5$, we can choose color $c_2$ for $u_i$ and color $c_3$ for
$v_{i+2}$ such that $|L(v_{i+1})\setminus\{c_1,c_2,c_3\}|\geq 4$.  Now we are in the same situation as in the proof of Case 2.  If $c_2=c_3$, then we use color $c_2$
on $u_i$ and $v_{i+2}$ and color greedily as in Case 2.  If $c_2\notin L(v_{i+1})\setminus\{c_1\}$, then we use $c_2$ on $u_i$ and we can finish by Case 1.
Hence we must have $c_3\notin L(v_{i+1})$.  Use $c_3$ on $L(v_{i+2})$.  As in Case 2, we have $|L(v_i)\setminus\{c_1,c_3\}|\geq5$ and $|L(v_{i+1})\setminus\{c_1,c_3\}|\geq5$.  Hence, we can finish as in Case 2.

\textbf{Remark}: Case 2 and Case 3 imply that for every $i$ we have $L(u_{i-1})\cup L(u_i)\cup L(u_{i+1})\subseteq L(v_i)$.  Furthermore, Case 1 shows
that $L(u_i)\cap L(u_{i+1}) = \emptyset$ for all $i$.  To show that $L(u_{i-1})$, $L(u_i)$, and $L(u_{i+1})$ are pairwise disjoint we prove Case 4.

\textbf{Case 4}: If we can choose color $c_1$ for $u_{i-1}$ and color $c_2$ for $u_{i+1}$ such that $|L(v_i)\setminus\{c_1,c_2\}|\geq 5$, then we can extend the coloring to $G$.

Use color $c_1$ on $u_{i-1}$ and color $c_2$ and $u_{i+1}$.  Since $|L(u_i)|=2$ and $|L(v_{i+2})\setminus\{c_2\}|\geq 5$ and $|L(v_{i+1})|=6$, we can choose color $c_3$ for
$u_i$ and color $c_4$ for $v_{i+2}$ such that $|L(v_{i+1})\setminus\{c_2,c_3,c_4\}|\geq 4$.  If $c_3=c_4$, then we use color $c_3$ on $u_i$ and $v_{i+2}$; since $\ex(v_{i+1})\geq 1$ and $\ex(v_i)\geq 2$, we can finish by Lemma~\ref{extlemma}.
So we assume $c_3\neq c_4$. Note that either $c_3\notin L(v_{i+1})$ or $c_4\notin L(v_{i+1})$.  

Suppose $c_3\notin L(v_{i+1})$.  Use $c_3$ on $u_i$.  
Since $|L(v_{i-1}) \setminus\{c_1,c_3\}|\geq 4$ and $|L(u_{i+2})|=2$ and $|L(v_{i+1})\setminus\{c_3\}| \geq 5$, we can choose color $c_5$ for $v_{i-1}$ and color $c_6$ for $u_{i+2}$
such that $|L(v_{i+1})\setminus\{c_2,c_3,c_5,c_6\}|\geq 4$.  
Now since $\ex(v_i)\geq 1$ and $\ex(v_{i+1})\geq 2$, we can finish by Lemma~\ref{extlemma}.

Suppose instead that $c_4\notin L(v_{i+1})$.
Use $c_4$ on $v_{i+2}$.  Color $u_{i+2}$ and $u_{i+3}$ arbitrarily; call these colors $c_5$ and $c_6$, respectively.  Since $|L(u_i)|=2$ and $|L(v_{i+3})\setminus\{c_4,c_5,c_6\}|\geq 3$
and $|L(v_{i+1})\setminus\{c_2,c_4,c_5\}|\geq4$, we can choose color $c_7$ for $u_i$ and color $c_8$ for $v_{i+3}$ such that $|L(v_{i+1})\setminus\{c_2,c_4,c_5,c_7,c_8\}|\geq 3$.
Now since $\ex(v_i)\geq 1$ and $\ex(v_{i+1})\geq 2$, we can finish by Lemma~\ref{extlemma}.

\textbf{Case 5}: We can extend the coloring to $G$ in the following way.  Color each $u_j$ arbitrarily; let $c(u_j)$ denote the color we use on each $u_j$.
Now assign a color to each $v_j$ from $L(u_j)\setminus\{c(u_j)\}$.

For each $j$, Case 4 implies that $L(u_{j-1})$, $L(u_j)$, and $L(u_{j+1})$ are pairwise disjoint.  Hence, each $v_j$ receives a color not in 
$\{c(u_{j-1}), c(u_j), c(u_{j+1})\}$. 
Similarly, since $L(u_j)$ is disjoint from $L(u_{j-2}), L(u_{j-1}), L(u_{j+1})$, and $L(u_{j+2})$, vertex $v_j$ receives a color not in
$\{c(v_{j-2}), c(v_{j-1}), c(v_{j+1}), c(v_{j+2})\}$.
Hence, the coloring of $G^2$ is valid.
\end{proof}

\section{Planar subcubic graphs with girth at least 7}
\label{planargirth7}

In this section we prove that if $G$ is a subcubic planar graph with girth at least 7, then $\chi_l(G^2)\leq 7$.
Lemma~\ref{girthlemma} implies that such a graph $G$ has $\Mad(G) < \frac{14}5$.
We exhibit four 7-reducible configurations.  We show that every subcubic graph with $\Mad(G)<\frac{14}5$ contains at least one of these 7-reducible configurations.
This implies the desired theorem.

\begin{lemma}
\label{7-reducible}
Let $G$ be a {\em minimal} subcubic graph such that $\chi_l(G^2) > 7$.  
For each vertex $v$, let $M_1(v)$ and $M_2(v)$ be the number of 2-vertices at distance 1 and distance 2 from $v$.
If $v$ is a 3-vertex, then $2M_1(v)+M_2(v)\leq 2$.  If $v$ is a 2-vertex, then $2M_1(v)+M_2(v)=0$.  
\end{lemma}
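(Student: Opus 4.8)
The plan is to argue by contradiction. Fix a list assignment $L$ with every list of size $7$ that witnesses $\chi_l(G^2)>7$, suppose one of the forbidden local structures occurs, and then produce an $L$-coloring of $G^2$. In each case I will delete a small set $S$ of vertices chosen so that $(G-S)^2=G^2-S$ --- for which it suffices that every vertex of $S$ has at most one neighbour in $V(G)\setminus S$ --- color $(G-S)^2$ from $L$ using minimality, and extend greedily to $S$ in a carefully chosen order. The only arithmetic involved is: a $2$-vertex has degree at most $5$ in $G^2$ if one of its two neighbours is a $2$-vertex (and at most $6$ in general), while a $3$-vertex has degree at most $8$ in $G^2$ if one of its neighbours is a $2$-vertex (and at most $9$ in general). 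When extending, I order $S$ so that each vertex, at the moment it is colored, still has as many uncolored $G^2$-neighbours (inside $S$) as possible.

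The first step, which does most of the work, is the sub-claim that $G$ cannot contain two $2$-vertices $x,y$ with $\mathrm{dist}_G(x,y)\le 2$. If $x\sim y$, take $S=\{x,y\}$; each of $x,y$ has exactly one neighbour outside $S$, so $(G-S)^2=G^2-S$; after coloring $(G-S)^2$ I color $x$ (whose only uncolored $G^2$-neighbour is $y$, so $\ge 3$ colors remain) and then $y$ ($\ge 2$ colors remain). If $\mathrm{dist}_G(x,y)=2$, let $z$ be a common neighbour; if $d(z)=2$ then $z$ is a $2$-vertex adjacent to the $2$-vertex $x$, which is the previous case, so $d(z)=3$ and I take $S=\{x,y,z\}$ (each vertex of $S$ again has $\le 1$ neighbour outside $S$); after coloring $(G-S)^2$ I color $z$ first (its only uncolored $G^2$-neighbours are $x$ and $y$, and $\deg_{G^2}(z)\le 7$, so $\ge 2$ colors remain), then $x$ ($\deg_{G^2}(x)\le 6$), then $y$. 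This sub-claim immediately gives the statement for $2$-vertices (a $2$-vertex $v$ with $M_1(v)\ge 1$ or $M_2(v)\ge 1$ yields such a pair) and rules out $M_1(v)\ge 2$ at a $3$-vertex $v$ (its two $2$-vertex neighbours lie at distance $\le 2$ from each other).

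It remains to handle a $3$-vertex $v$ with $(M_1(v),M_2(v))\in\{(1,\ge 1),(0,\ge 3)\}$; after invoking the sub-claim, exactly two configurations survive. \textbf{(a)} $v$ has a $2$-vertex neighbour $u$ and a further $2$-vertex $w$ at distance $2$ from $v$ through a $3$-vertex neighbour $b$ of $v$. Here I take $S=\{v,u,w,b\}$ (each of its four vertices has exactly one neighbour outside $S$), and after coloring $(G-S)^2$ I color $v$ first ($v$ has three uncolored $G^2$-neighbours in $S$, and $\deg_{G^2}(v)\le 8$, so $\ge 2$ colors), then $b$ ($\ge 1$ color), then $u$, then $w$. \textbf{(b)} all three neighbours $a,b,c$ of $v$ are $3$-vertices and each has exactly one $2$-vertex at distance two from $v$ through it, say $a_1,b_1,c_1$. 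Here I take $S=\{v,a,b,c,a_1,b_1,c_1\}$ (again each vertex of $S$ has $\le 1$ neighbour outside $S$). The delicate point is that $a,b,c$ form a triangle in $G^2$ (they share $v$), so the naive greedy order fails; the remedy is to color $a,b,c$ \emph{first} --- at that moment each has at least $3$ colors available, and $K_3$ is $3$-list-colorable --- then color $v$ (at least one color, using $\deg_{G^2}(v)\le 9$ and that $a_1,b_1,c_1$ are still uncolored), and finally $a_1,b_1,c_1$ (each still has at least one color, using $\deg_{G^2}(a_1)\le 6$).

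I expect the bookkeeping in part (b) to be the main obstacle: the degree bounds are attained almost tightly, so one must track precisely how many $G^2$-neighbours of each of $v,a,b,c$ lie inside $S$ (hence are uncolored while $(G-S)^2$ is being colored), use that each $2$-vertex $a_1,b_1,c_1$ contributes only one vertex to the second neighbourhood of its $3$-vertex, and observe that replacing any ``generic'' distinctness assumption (e.g.\ $a_2\ne b$, $a_1'$ not a vertex of $S$, etc.) by a coincidence only decreases the relevant $G^2$-degrees and list-constraints, so it can never hurt. Everything else is routine, and since every step is a greedy choice the argument yields a linear-time coloring algorithm.
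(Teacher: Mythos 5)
Your proposal is correct and follows essentially the same route as the paper: your sub-claim is exactly the paper's Configurations 1 and 2 (adjacent 2-vertices, and two 2-vertices with a common neighbor), and your cases (a) and (b) are the paper's Configurations 3 and 4, reduced with the same deletion sets and the same greedy extension orders (including coloring $a,b,c$ before $v$ in the star configuration). The only presentational difference is that you make explicit the $(G-S)^2=G^2-S$ verification and the triangle step, which the paper leaves implicit.
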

\begin{proof}
We list four 7-reducible configurations.  
We show that if there exists a vertex $v$ such that the quantity $2M_1(v)+M_2(v)$ is larger than claimed, then $G$ contains one
of the four 7-reducible configurations.

\textbf{Configuration 1}: If $G$ contains two adjacent 2-vertices $u_1$ and $u_2$, then $G[u_1u_2]$ is 7-reducible.  
Let $H=G-\{u_1,u_2\}$.  By hypothesis, $H^2$ has a proper coloring from its lists.  Now greedily color vertex $u_1$, then vertex $u_2$.

\textbf{Configuration 2}: If $G$ contains two 2-vertices $u_1$ and $u_2$ adjacent to a 3-vertex $v$, then $G[u_1u_2v]$ is 7-reducible.  
Let $H=G-\{u_1,u_2,v\}$.  
By hypothesis, $H^2$ has a proper coloring from its lists.  Now greedily color $v, u_1, u_2$ (in that order).

\textbf{Configuration 3}: If $G$ contains two adjacent 3-vertices $v_1$ and $v_2$ and each $v_i$ is adjacent to a distinct 2-vertex $u_i$, 
then $G[v_1v_2u_1u_2]$ is 7-reducible.  Let $H=G-\{v_1,v_2, u_1, u_2\}$.  By hypothesis, $H^2$ has a proper coloring from its lists.  
Now greedily color $v_1, v_2, u_1, u_2$.

\textbf{Configuration 4}: Suppose $G$ contains a 3-vertex $w$ that is adjacent to three 3-vertices $v_1$, $v_2$, and $v_3$.  
If each $v_i$ is adjacent to a distinct 2-vertex $u_i$, then $G[v_1v_2v_3u_1u_2u_3w]$ is 7-reducible.
Let $H=G-\{v_1,v_2, v_3, u_1, u_2, u_3, w\}$.  By hypothesis, $H^2$ has a proper coloring from its lists.  Now greedily color $v_1$, $v_2$, $v_3$, $w$, $u_1$, $u_2$, $u_3$.

If $v$ is a 2-vertex and $M_1(v) + M_2(v) > 0$, then $G$ contains
Configuration~1 or Configuration~2.  Hence $2M_1(v)+M_2(v)=0$ for every
2-vertex $v$.  If $v$ is a 3-vertex, then $M_1(v)>1$ yields Configuration~2.
If $M_1(v)=1$ and $M_2(v)\ge 1$, then $G$ contains Configuration 3.
If $M_1(v)=0$ and $M_2(v)\ge3$, then $G$ contains Configuration 4.
Hence $2M_1(v)+M_2(v)\leq 2$.
%
\end{proof}

\begin{theorem}
\label{2frac45}
If $G$ is a subcubic graph with $\Mad(G) < \frac{14}5$, then $\chi_l(G^2)\leq 7$.
\end{theorem}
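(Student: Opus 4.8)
The plan is to use the discharging method, combining the four 7-reducible configurations from Lemma~\ref{7-reducible} with the hypothesis $\Mad(G)<\frac{14}5$. Suppose for contradiction that the statement fails, and let $G$ be a minimal subcubic graph with $\chi_l(G^2)>7$; then $G$ satisfies the conclusion of Lemma~\ref{7-reducible}, and since $G$ is a subgraph of itself, $\Mad(G)<\frac{14}5$. Recalling from Section~\ref{prelims} that we may assume $G$ has no 1-vertices, every vertex of $G$ is a 2-vertex or a 3-vertex. Since $\Mad(G)$ is an upper bound on the average degree of every subgraph, in particular $\mathrm{ad}(G)=\frac{2e}{n}<\frac{14}5$, so $\sum_{v}(5d(v)-14)<0$; equivalently, assigning each vertex $v$ the initial charge $\mu(v)=5d(v)-14$, the total charge is negative. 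A 3-vertex has charge $1$ and a 2-vertex has charge $-4$.

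Next I would specify the discharging rule: each 3-vertex sends charge $1$ along each edge to an adjacent 2-vertex, and sends charge $\frac12$ to each 2-vertex at distance exactly $2$ from it. (The weights $1$ and $\frac12$ match the coefficients $2M_1(v)+M_2(v)$ appearing in Lemma~\ref{7-reducible}.) I then check that after discharging every vertex has nonnegative final charge $\mu^*(v)$, contradicting $\sum_v\mu^*(v)=\sum_v\mu(v)<0$. For a 3-vertex $v$: by Lemma~\ref{7-reducible} we have $2M_1(v)+M_2(v)\le 2$, so $v$ sends away at most $M_1(v)+\frac12 M_2(v)=\frac12(2M_1(v)+M_2(v))\le 1$, hence $\mu^*(v)\ge 1-1=0$. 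For a 2-vertex $v$: by Lemma~\ref{7-reducible}, $M_1(v)=M_2(v)=0$, meaning both neighbors of $v$ are 3-vertices and moreover every vertex at distance $2$ from $v$ is a 3-vertex. The two neighbors are 3-vertices, each contributing $1$ to $v$ (via the distance-1 rule), giving $v$ at least $2$ incoming charge, so $\mu^*(v)\ge -4+2=-2$. This is not yet nonnegative, so the rule above is insufficient as stated.

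The main obstacle — and the part requiring care — is squeezing enough charge onto the 2-vertices. A 2-vertex $v$ has neighbors $x,y$, both 3-vertices; each of $x$ and $y$ has two other neighbors, and by Lemma~\ref{7-reducible} applied at $v$ (using $M_2(v)=0$) none of those four second-neighbors is a 2-vertex, and applying Lemma~\ref{7-reducible} at $x$ and at $y$ (they are 3-vertices adjacent to the 2-vertex $v$, so $2M_1+M_2\le 2$ forces $M_1(x)=1$ and $M_2(x)\le 0$) shows that $x$ has no other 2-neighbor and no 2-vertex at distance 2, hence $x$ sends out charge exactly $1$ (all of it to $v$), and likewise for $y$, so $\mu^*(x)=\mu^*(y)=0$. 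Thus the correct rule must route more than $1$ unit from each of $x,y$ to $v$; the natural fix is to have a 3-vertex send its full surplus of $1$ to its 2-neighbor when it has one, but this still only yields $\mu^*(v)=-2$. The genuine resolution is to observe that the four second-neighbors of $v$ are themselves 3-vertices with spare charge: each such 3-vertex $z$ has at most one 2-vertex at distance 2 (again by Lemma~\ref{7-reducible}, since being adjacent to the 3-vertex $x$ which is adjacent to the 2-vertex $v$ is fine, but $z$ itself, if a 3-vertex with $M_1(z)=0$, may have up to $M_2(z)\le 2$, i.e. two 2-vertices at distance 2, each getting $\frac12$). So I would instead use the rule: a 3-vertex gives $\frac32$ to each adjacent 2-vertex and $\frac12$ to each 2-vertex at distance 2; then Lemma~\ref{7-reducible} gives, for a 3-vertex $v$, outgoing charge $\le\frac34(2M_1(v)+M_2(v))\le\frac32$ when $M_1(v)\ge 1$ (forcing $M_2(v)=0$, so outgoing $=\frac32$, $\mu^*(v)\ge-\frac12$) — still negative. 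I therefore expect the actual proof to introduce a more refined rule that also moves charge between 3-vertices (e.g., a 3-vertex adjacent to a 2-vertex pulls $\frac12$ from each of its other two neighbors before forwarding), and the crux of the argument is a careful local case analysis around each 2-vertex and its radius-2 neighborhood verifying, via the four reducible configurations, that every vertex ends with charge $\ge 0$. Once that local check goes through, the contradiction $0\le\sum_v\mu^*(v)=\sum_v\mu(v)<0$ completes the proof.
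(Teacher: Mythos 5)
Your framework is exactly the paper's, and in fact your first discharging rule is precisely the paper's rule after an affine rescaling: the paper keeps $\mu(v)=d(v)$, has each 3-vertex give $\frac15$ to each 2-vertex at distance 1 and $\frac1{10}$ to each 2-vertex at distance 2, and aims for final charge $\frac{14}5$ everywhere; replacing $\mu$ by $5\mu-14$ turns this into your charges $5d(v)-14$ with transfers of $1$ and $\frac12$ and target $0$. The genuine gap is an error in your verification at a 2-vertex: you counted only the charge arriving from the two neighbors and forgot the charge arriving under the distance-2 clause of your own rule. Since $2M_1(v)+M_2(v)=0$ for a 2-vertex $v$, all four vertices at distance 2 from $v$ (the other neighbors of $v$'s two neighbors) are 3-vertices, and each of them sees $v$ as a 2-vertex at distance 2 and therefore sends $v$ an additional $\frac12$. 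Hence $\mu^*(v)=-4+2(1)+4(\tfrac12)=0$, not $-2$, and the rule you dismissed already succeeds; compare the paper's computation $\mu^*(v)=2+2(\tfrac15)+4(\tfrac1{10})=\frac{14}5$. Your check at 3-vertices, namely $\mu^*(v)\geq 1-\tfrac12\bigl(2M_1(v)+M_2(v)\bigr)\geq 0$ via Lemma~\ref{7-reducible}, is correct and is exactly the paper's.

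Because of this miscount, the remainder of your proposal chases a problem that does not exist and never closes the argument: the successive ``fixes'' ($\tfrac32$ at distance 1, pulling charge between 3-vertices) are abandoned in turn, and you end by predicting that some unspecified refined rule and ``a careful local case analysis'' will work, which is not a proof. As written the proposal therefore does not establish the theorem. The repair is small: restore the forgotten distance-2 transfers into the 2-vertex computation and delete everything from ``This is not yet nonnegative'' onward; what remains is the paper's proof.
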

\begin{proof}
Let $G$ be a minimal counterexample to the theorem.
By Lemma~\ref{7-reducible}, each 3-vertex $v$ satisfies $2M_1(v)+M_2(v)\leq 2$ and
each 2-vertex $v$ satisfies $2M_1(v)+M_2(v)=0$.
We show that these bounds imply $\Mad(G)\geq \frac{14}5$.
We use discharging to average out the vertex degrees, raising the degree
``assigned'' to each 2-vertex until every vertex is assigned at least $\frac{14}5$.
The initial charge $\mu(v)$ for each vertex $v$ is its degree.  We use a single
discharging rule:
\begin{enumerate}
\item[] \textbf{R1}: Each 3-vertex gives $\frac15$ to each 2-vertex at
distance 1 and gives $\frac1{10}$ to each 2-vertex at distance~2.
\end{enumerate}
Let $\mu^*(v)$ be the resulting charge at $v$.  Each 2-vertex has distance at
least 3 from every other 2-vertex.  If $d(v)=2$, we therefore have
${\mu^*(v) = 2 + 2(\frac15) + 4(\frac1{10}) = \frac{14}5}$.
Since $2M_1(v) + M_2(v)\leq 2$ when $d(v)=3$, we obtain
$\mu^*(v) = 3 - \frac15M_1(v) -\frac1{10}M_2(v) = 3 - \frac1{10}(2M_1(v) + M_2(v)) \geq 3-\frac15 = \frac{14}5$
in this case.
Since each vertex now has charge at least $\frac{14}5$, the average
degree is at least $\frac{14}5$, which is a contradiction.
\end{proof}

Theorem~\ref{2frac45} is best possible, since there exists a subcubic graph $G$ with $\Ad(G)$ equal to $\frac{14}5$ such that $G^2$ is not 7-colorable.  Form $G$ by removing a single edge from the Petersen graph.  Clearly, $\Ad(G)=\frac{14}5$; it is straightforward to verify that $\mad(G)=\frac{14}5$.  It is easy to show that $G^2$ contains a clique of size 8; hence, $G^2$ is not 7-colorable.

\begin{corollary}
If $G$ is a planar subcubic graph with girth at least 7, then $\chi_l(G^2)\leq 7$.
\end{corollary}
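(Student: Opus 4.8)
The plan is simply to combine Lemma~\ref{girthlemma} with Theorem~\ref{2frac45}. Since $G$ is planar with girth at least $7$, Lemma~\ref{girthlemma} gives $\Mad(G) < \frac{2g}{g-2}$; the right-hand side is decreasing in $g$, so taking $g = 7$ yields $\Mad(G) < \frac{14}{5}$. Theorem~\ref{2frac45} then gives $\chi_l(G^2) \leq 7$ immediately, and the corollary is proved.

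So there is really no obstacle here: all the work was already done in proving Theorem~\ref{2frac45}, and the corollary is just the specialization to the planar case via the girth-to-$\Mad$ translation. The one thing I would double-check in passing is that the deletions used in the four $7$-reducible configurations of Lemma~\ref{7-reducible} respect the subtlety flagged in Section~\ref{prelims}: in each configuration the removed set $H$ consists only of $2$-vertices together with (in Configurations~2 and~4) $3$-vertices all of whose neighbors already lie in $H$, so every vertex of $H$ has at most one neighbor outside $H$. Hence deleting $H$ creates no new edges, i.e. $G^2 - V(H) = (G - V(H))^2$, and the greedy extensions described there genuinely extend colorings of $(G - V(H))^2$ obtained from the minimality hypothesis. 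With that observation in place, the chain Lemma~\ref{girthlemma} $\Rightarrow$ Theorem~\ref{2frac45} $\Rightarrow$ Corollary goes through without further effort.
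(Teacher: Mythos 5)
Your proof is correct and is essentially identical to the paper's own one-line argument: apply Lemma~\ref{girthlemma} with $g=7$ to get $\Mad(G)<\frac{14}{5}$, then invoke Theorem~\ref{2frac45}. The extra check that each deleted configuration $H$ satisfies $G^2 - V(H) = (G-V(H))^2$ is a reasonable sanity check but belongs to the proof of Lemma~\ref{7-reducible}, not to this corollary.
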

\begin{proof}
By Lemma~\ref{girthlemma}, $\Mad(G) < \frac{14}5$.  By Theorem~\ref{2frac45}, this implies that $\chi_l(G^2)\leq 7$.
\end{proof}

\section{Planar subcubic graphs with girth at least 9}
\label{planargirth9}

In this section we prove that if $G$ is a subcubic planar graph with girth at least 9, then $\chi_l(G^2)\leq 6$.
Lemma~\ref{girthlemma} implies that such a graph $G$ has $\Mad(G) < \frac{18}7$.
In fact we prove the following stronger result: If $G$ is a subcubic graph with girth at least 7 and $\Mad(G)<\frac{18}7$, then $\chi_l(G^2)\leq 6$.
The restriction of girth at least 7 is necessary to ensure for example that vertices $u_1$ and $u_4$ in Figure 6b are distance at least 3 apart.
Recall that a configuration is $6'$-reducible if it cannot appear in a 6-minimal graph with girth at least 7.
We exhibit four $6'$-reducible configurations.  
We show that every subcubic graph with $\Mad(G)<\frac{18}7$
contains at least one of these $6'$-reducible configurations.
This implies the desired theorem.

We will prove that the following four configurations (shown in Figures 6a, 6b, 7a, and 7b) are $6'$-reducible.
We begin with a definition:
If $v$ is a 3-vertex, then we say that $v$ is of \textit{class $i$} if $v$ is adjacent to $i$ vertices of degree 2.  
Note that if $v_1$ and $v_2$ are adjacent 2-vertices, then $G[v_1v_2]$ is $6'$-reducible.  
Hence, we assume that no pair of 2-vertices is adjacent.

\begin{figure}[!h!bt]
  \label{two6reduc}
  \begin{center}
 \psfrag{V1}{{\normalsize$u_1$}}
    \psfrag{V2}{{\normalsize$u_2$}}
    \psfrag{V3}{{\normalsize$v_1$}}
    \psfrag{V4}{{\normalsize$v_2$}}
    \psfrag{V5}{{\normalsize$u_3$}}
    \psfrag{V6}{{\normalsize$u_4$}}
    \psfrag{W1}{{\normalsize$u_1$}}
    \psfrag{W2}{{\normalsize$u_2$}}
    \psfrag{H1}{{\normalsize$u_3$}}
    \psfrag{W3}{{\normalsize$v_1$}}
    \psfrag{W4}{{\normalsize$v_2$}}
    \psfrag{W5}{{\normalsize$u_4$}}
    \psfrag{Left}{\hspace{.27in}(a)}
    \psfrag{Right}{\hspace{.40in}(b)}
    \includegraphics{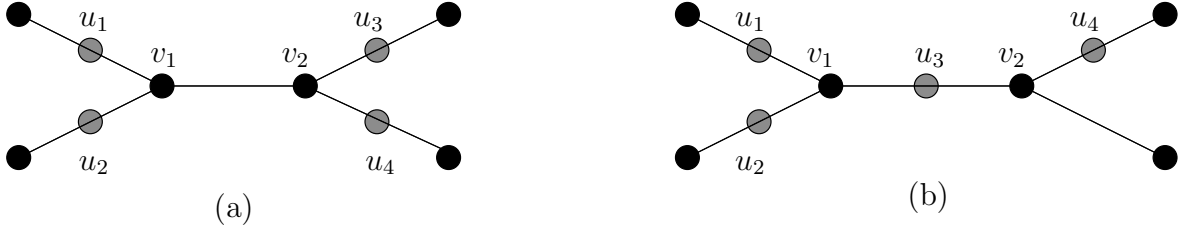}
  \end{center}
\caption{Two $6'$-reducible subgraphs. (a) Two adjacent class 2 vertices $v_1$ and $v_2$.
(b) A class 3 vertex $v_1$ and a class 2 vertex $v_2$ at distance 2.}
\end{figure}
\begin{lemma}

\label{6reduc-1}
If $v_1$ and $v_2$ are adjacent class 2 vertices, then $G[v_1v_2]$ is $6'$-reducible.
(This configuration is shown on the left in Figure~6.)
\end{lemma}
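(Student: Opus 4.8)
The plan is to suppose, for contradiction, that $G$ is a $6$-minimal subcubic graph with $g(G)\ge 7$ that contains this configuration, and to derive a colouring of $G^2$. Since $v_1$ and $v_2$ are adjacent class $2$ vertices, we may name the neighbours of $v_1$ as $u_1,u_2,v_2$ and those of $v_2$ as $v_1,u_3,u_4$, where each $u_i$ is a $2$-vertex; let $w_i$ be the neighbour of $u_i$ other than $v_1$ or $v_2$. First I would record what $g(G)\ge 7$ buys us: the ten vertices $v_1,v_2,u_1,\dots,u_4,w_1,\dots,w_4$ are pairwise distinct, the only edges of $G$ inside $D:=\{v_1,v_2,u_1,u_2,u_3,u_4\}$ are $v_1v_2$ and the four edges $v_iu_j$, and, crucially, $G^2-D=(G-D)^2$. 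The last point is the key bookkeeping step: a ``lost'' edge of $G^2-D$ would have to arise from a vertex of $D$ that is a common neighbour in $G$ of two vertices outside $D$, but $v_1$ and $v_2$ have all their neighbours inside $D$, and each $2$-vertex $u_i$ has only the one neighbour $w_i$ outside $D$, so no vertex of $D$ can play that role.

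Next, since $D\ne\emptyset$, $G-D$ is a proper subgraph, so by $6$-minimality $(G-D)^2$ is $6$-choosable; fix such a colouring, which by the previous paragraph is a partial proper colouring of $G^2$ with exactly the vertices of $D$ uncoloured. Now I would read off the residual lists. In $G^2$ the neighbours of $v_1$ are $u_1,u_2,v_2,w_1,w_2,u_3,u_4$ (seven vertices, distinct by the girth), of which only $w_1$ and $w_2$ are coloured, so $|L(v_1)|\ge 4$, and symmetrically $|L(v_2)|\ge 4$. Each $u_i$ has, as its coloured $G^2$-neighbours, only $w_i$ together with the at most two other neighbours of $w_i$ (girth forces these to lie outside $D$), so $|L(u_i)|\ge 3$. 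Finally one checks that $G^2[D]$ is precisely the graph $H^*$ consisting of $K_{2,4}$ with parts $\{v_1,v_2\}$ and $\{u_1,u_2,u_3,u_4\}$, plus the three extra edges $v_1v_2$, $u_1u_2$, and $u_3u_4$. So the whole lemma reduces to the following finite statement: $H^*$ is $L$-colourable whenever $|L(v_1)|,|L(v_2)|\ge 4$ and $|L(u_1)|,\dots,|L(u_4)|\ge 3$.

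To prove that, I would colour $v_1$ and $v_2$ with distinct colours and then complete the two disjoint edges $u_1u_2$ and $u_3u_4$ independently. Once $c(v_1)$ and $c(v_2)$ are fixed, the edge $u_1u_2$ can fail to be completable only if $u_1$ and $u_2$ are left with the same residual singleton, and since $|L(u_1)|=|L(u_2)|=3$ this forces $L(u_1)=L(u_2)$ with both chosen colours lying in that common list; likewise for $u_3u_4$. If $L(u_1)\ne L(u_2)$ and $L(u_3)\ne L(u_4)$, any distinct pair of colours works. If (say) $L(u_1)=L(u_2)$, then because $|L(v_1)|\ge 4>3$ I can choose $c(v_1)\notin L(u_1)$, which removes that obstruction; if moreover $L(u_3)=L(u_4)$ I would similarly try to take $c(v_2)\notin L(u_3)$, and the single genuinely tight case — where $L(v_1)\setminus L(u_1)$ and $L(v_2)\setminus L(u_3)$ are the same one-element set $\{a\}$ — is handled by instead colouring $v_1$ from $L(u_1)$ and setting $c(v_2)=a$. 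In every case we obtain a proper colouring of $G^2$, contradicting $6$-minimality.

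I expect the girth bookkeeping (distinctness of the ten vertices, and $G^2-D=(G-D)^2$) and the list-size counts to be routine; the only place needing care is the last step, and within it only the coincidence $L(u_1)=L(u_2)$ together with $L(u_3)=L(u_4)$, which is why the slack $|L(v_i)|\ge 4>|L(u_j)|$ matters.
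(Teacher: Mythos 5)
Your proposal is correct and follows essentially the same route as the paper: delete the same six-vertex set, invoke minimality (with the same check that $G^2-D=(G-D)^2$), obtain the same residual list sizes $|L(v_i)|\geq 4$, $|L(u_i)|\geq 3$, and exploit the slack $|L(v_1)|>|L(u_1)|$. The only difference is in the finish, where the paper avoids your case analysis by choosing a single color $c\in L(v_1)\setminus L(u_1)$ for $v_1$ and then coloring greedily in the order $u_3,u_4,v_2,u_2,u_1$.
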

\begin{proof}
Let $v_1$ and $v_2$ be adjacent class 2 vertices.
Let $v_1$ be adjacent to vertices $u_1$ and $u_2$ and let $v_2$ be adjacent to vertices $u_3$ and $u_4$.
Let $H=G-\{v_1,v_2,u_1,u_2,u_3,u_4\}$.
By hypothesis, $H^2$ has a coloring from its lists.
Let $L(x)$ denote the list of remaining available colors for each uncolored vertex $x$ in $G$.
Note that $|L(v_i)|\geq 4$ and $|L(u_i)|\geq 3$ for each $i$.  We assume that equality holds (otherwise we discard colors until it does).
Since $G$ has girth at least 7, note that $u_1$ and $u_2$ are each distance 3 from each of $u_3$ and $u_4$. 

Since $|L(v_1)| = 4$ and $|L(u_1)| = 3$, there is a color $c\in L(v_1) \setminus L(u_1)$.
Use color $c$ on vertex $v_1$.  The sizes of the new lists (after removing $c$ from each) are $|L(u_1)\setminus\{c\}| = 3$, 
$|L(v_2)\setminus\{c\}| \geq 3$, and $|L(u_i)\setminus\{c\}| \geq 2$ for $i=2,3,4$. 
Greedily color the remaining vertices in the order $u_3, u_4, v_2, u_2, u_1$.
\end{proof}

\begin{lemma}
\label{6reduc-2}
If $v_1$ is a class 3 vertex, $v_2$ is either a class 2 or class 3 vertex, and vertices $v_1$ and $v_2$ have a common
neighbor $u_3$, then $G[v_1v_2u_3]$ is $6'$-reducible.
(This configuration is shown on the right in Figure 6.)
Moreover, if $G$ contains this configuration and $G^2-u_3$ has a proper $L$-coloring from lists $L$ of size 6, then $G^2$ has two proper
$L$-colorings $\phi$ and $\psi$ such that $\phi(u_3)\neq \varphi(u_3)$.
\end{lemma}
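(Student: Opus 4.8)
The plan is to set up the configuration exactly as in Lemma~\ref{6reduc-1}: let $v_1$ be a class 3 vertex adjacent to $u_1,u_2,u_3$, and let $v_2$ be adjacent to $u_3$ and (if class 3) also to $u_4$ and $u_5$, or (if class 2) to $u_4$ and a 3-vertex. Since $v_1$ and $v_2$ share the neighbor $u_3$, the graph $H=G-\{v_1,v_2,u_3\}$ is a proper subgraph, so by minimality $H^2=(H)^2$ has a coloring from its lists. I first color $H^2$, then let $L(x)$ denote the remaining lists on the uncolored vertices $v_1,v_2,u_3$. Using that $G$ has girth at least 7 (so that the off-cycle neighbors of $v_1,v_2$ are pairwise at distance $\geq 3$ and in particular the vertices whose colors restrict $v_1,v_2,u_3$ are accounted for correctly), I count: $u_3$ has at most $v_1,v_2$ and at most two further neighbors in $G^2$ uncolored-or-colored, and because $v_1$ is class 3 its two ``uncle'' neighbors in $G^2$ are 2-vertices, which cuts down $|N[v_1]|$; similarly for $v_2$. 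The upshot, as in the earlier lemmas, will be that $|L(v_1)|\geq$ some value, $|L(v_2)|\geq$ some value, and $|L(u_3)|\geq 4$ (it loses at most $2$ colors to $v_1,v_2$ since everything else in its $G^2$-neighborhood is already colored), with enough slack to complete the coloring; this gives the first (existence) assertion.

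The second, new, part of the statement is the genuine content. Suppose $G^2-u_3$ has a proper $L$-coloring from lists of size 6. I would argue as follows: once the colors on $V(G)\setminus\{u_3\}$ are fixed, the set of colors \emph{forbidden} at $u_3$ is exactly the set of colors used on $N_{G^2}(u_3)\setminus\{u_3\}$. The key point is that $|N_{G^2}(u_3)|\leq 9$ always, but here I claim it is at most $4$ (or small enough), because $u_3$ is adjacent in $G$ to $v_1$ and $v_2$, each of which is a low-class vertex so has a 2-vertex neighbor, forcing $u_3$ to have repeated forbidden colors or simply too few neighbors. More carefully: $u_3$'s neighbors in $G^2$ are $v_1$, $v_2$, the third $G$-neighbor $w$ of $u_3$ (if $d(u_3)=3$), the third $G$-neighbor of $v_1$ besides $u_1,u_2$ — but wait, $v_1$ is class 3 so its only non-2-vertex neighbor is $u_3$ itself, meaning $u_1,u_2$ are 2-vertices and their neighbors are at distance 3 from $u_3$ by girth, hence \emph{not} in $N_{G^2}(u_3)$. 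Thus $N_{G^2}(u_3)$ consists of $v_1,v_2,u_1,u_2$, plus at most the analogous vertices coming from $v_2$ and from $w$. Tallying these with the girth-7 constraint shows $|N_{G^2}(u_3)\setminus\{u_3\}|\leq 5$, so at least $6-5=1$ color is available — but to get \emph{two} colorings differing at $u_3$ I need $|N_{G^2}(u_3)\setminus\{u_3\}|\leq 4$, i.e.\ two available colors for $u_3$. I would nail down the exact count from the figure; the class hypotheses on $v_1$ (class 3) and $v_2$ (class 2 or 3) are precisely what forces enough of $v_1$'s and $v_2$'s $G^2$-neighbors to lie outside $N_{G^2}(u_3)$. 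Given two free colors at $u_3$, extend the given coloring of $G^2-u_3$ to $u_3$ in two distinct ways to obtain $\phi$ and $\psi$ with $\phi(u_3)\neq\psi(u_3)$.

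The main obstacle is the careful bookkeeping of $N_{G^2}(u_3)$: one must verify, using girth at least 7 and the class conditions, that the two 2-vertex neighbors of $v_1$ (and the corresponding low-class structure at $v_2$) push enough vertices past distance 2 from $u_3$ that $u_3$ retains at least two usable colors after everything else is colored. This is where the ``class 3 / class 2'' hypothesis is essential rather than decorative, and it is the step I would write out in full detail, including a small case split according to whether $v_2$ is class 2 or class 3 and whether $d(u_3)=2$ or $3$. Everything else — coloring $H^2$ by minimality, passing to residual lists, and the final greedy extensions — is routine in the style of Lemmas~\ref{6reduc-1}--\ref{noC6} already proved above.
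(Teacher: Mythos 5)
Your proposal has three genuine problems, two of which are fatal to the approach as written.

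First, your choice of subgraph $H=G-\{v_1,v_2,u_3\}$ runs directly into the pitfall the paper flags in Section~\ref{prelims}: minimality only gives you a coloring of $(G-\{v_1,v_2,u_3\})^2$, not of $G^2-\{v_1,v_2,u_3\}$, and these differ. For instance $u_1$ and $u_2$ are adjacent in $G^2$ (distance 2 via $v_1$) but, since $g(G)\geq 7$, they are at distance at least 3 in $G-\{v_1,v_2,u_3\}$; so the coloring you obtain by minimality may assign them the same color, and then no extension to $v_1,v_2,u_3$ yields a proper coloring of $G^2$. The paper avoids this by deleting all of $\{v_1,v_2,u_1,u_2,u_3,u_4\}$, a set $S$ for which one checks $G^2-S=(G-S)^2$.

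Second, your count of $N_{G^2}(u_3)$ is based on a misreading: since $v_1$ is class 3, \emph{all three} of its neighbors, including $u_3$, are 2-vertices, so there is no ``third $G$-neighbor $w$ of $u_3$'' and no case split on $d(u_3)$. The correct neighborhood is $N_{G^2}(u_3)=\{v_1,v_2,u_1,u_2,u_4,z\}$ where $z$ is the remaining neighbor of $v_2$ --- that is, exactly $6$ vertices, not ``at most $4$.'' Consequently your strategy for the ``moreover'' clause cannot work: an arbitrary proper coloring of $G^2-u_3$ can use six distinct colors of $L(u_3)$ on these six neighbors and leave $u_3$ with nothing. The conclusion is not obtained by extending a given coloring of $G^2-u_3$ in two ways; the paper \emph{constructs} a particular coloring with two choices left at $u_3$. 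Concretely: after coloring $H^2$ one has $|L'(u_3)|\geq 5$ (only $z$ is colored), $|L'(v_1)|\geq 4$, $|L'(u_1)|\geq 3$; one colors $v_1$ with a color in $L'(v_1)\setminus L'(u_1)$ (so $u_1$ can be saved for last), then $v_2$, then $u_4$, at which point $u_3$ still has $5-3=2$ available colors --- these two choices give $\phi$ and $\psi$ --- and finally $u_2$, $u_1$ are colored greedily. This ordering-based argument is the missing idea in your sketch.
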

\begin{proof}
Let $v_1$ be a 3-vertex adjacent to three 2-vertices $u_1$, $u_2$, and $u_3$.
Suppose that $v_2$ is a 3-vertex adjacent to $u_3$ and also adjacent to another 2-vertex, $u_4$.
Let $H=G-\{v_1,v_2,u_1,u_2,u_3,u_4\}$.
By hypothesis, $H^2$ has a coloring from its lists.
Let $L'(x)$ denote the list of remaining available colors for each uncolored vertex $x$ in $G$.
Note that $|L'(u_1)|\geq 3$, $|L'(u_2)|\geq 3$, $|L'(u_3)|\geq 5$, $|L'(u_4)|\geq 2$, $|L'(v_1)|\geq 4$, and $|L'(v_2)|\geq 2$.
We assume that equality holds.
(Since $G$ has girth at least 7, note that $u_4$ is distance at least 3 from each of $u_1$, $u_2$, and $v_1$.)

Since $|L'(v_1)|=4$ and $|L'(u_1)|=3$, we can choose a color $c\in L'(v_1)\setminus L'(u_1)$.  Use color $c$ on vertex $v_1$.  Greedily color vertex $v_2$, then vertex $u_4$.  At this point, vertex $u_3$ has at least two available colors.  We can use either available color on $u_3$ (one choice will give coloring $\phi$ and the other will give coloring $\psi$).  Now greedily color vertex $u_2$, then vertex $u_1$. 
\end{proof}

\begin{figure}[!h!bt]
  \begin{center}
    \psfrag{A1}{{\normalsize$v_1$}}
    \psfrag{A2}{\vspace{.2in}{\normalsize$v_2$}}
    \psfrag{A3}{{\normalsize$v_3$}}
    \psfrag{B1}{{\normalsize$u_1$}}
    \psfrag{B2}{{\normalsize$u_5$}}
    \psfrag{B3}{{\normalsize$u_2$}}
    \psfrag{B4}{{\normalsize$u_3$}}
    \psfrag{B5}{{\normalsize$u_4$}}
    \psfrag{U}{{\normalsize $v_1$}}
    \psfrag{V1}{{\normalsize$v_2$}}
    \psfrag{V2}{}
    \psfrag{V3}{}
    \psfrag{S1}{{\normalsize$u_1$}}
    \psfrag{S2}{{\normalsize$u_2$}}
    \psfrag{S3}{{\normalsize$u_3$}}
    \psfrag{W1}{{\normalsize$u_4$}}
    \psfrag{W2}{{\normalsize$u_5$}}
    \psfrag{W3}{{\normalsize$v_3$}}
    \psfrag{W4}{{\normalsize$v_4$}}
    \psfrag{W5}{{\normalsize$u_6$}}
    \psfrag{Left}{(a)}
    \psfrag{Right}{(b)}
    \psfrag{X}{}
    \psfrag{Y}{}
    \psfrag{Z}{}
    \includegraphics{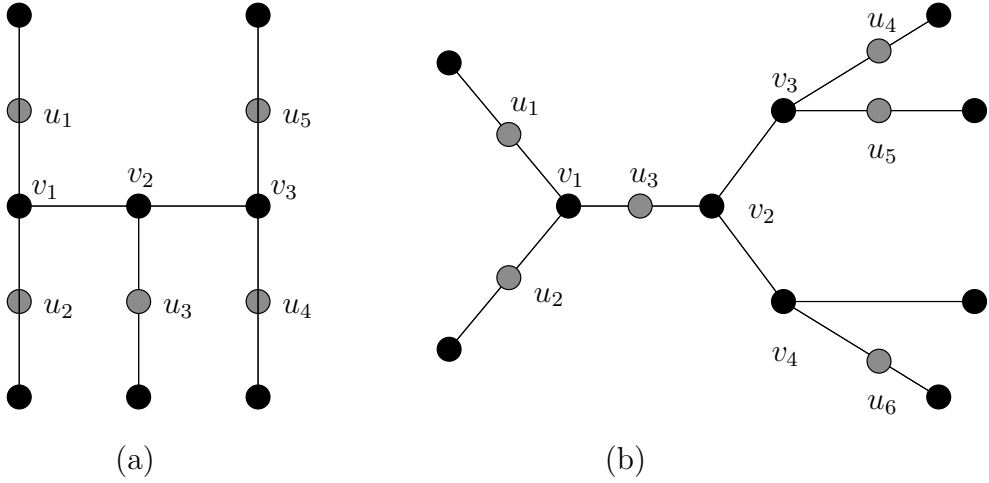}
  \end{center}
\caption{An $H$-configuration and a $Y$-configuration; both configurations are $6'$-reducible.
(a) An $H$-configuration: a class 1 vertex $v_2$ is adjacent to two class 2 vertices $v_1$ and $v_3$.
(b) A Y-configuration: a class 1 vertex $v_2$ is adjacent to a class 2 vertex $v_3$ and a class 1 vertex $v_4$, and is distance two from a class 3 vertex $v_1$.} 
\label{Y-conf}
\label{H-conf}
\end{figure}

\begin{lemma}
\label{6reduc-3}
We use the term $H$-configuration to denote a class 1 vertex adjacent to two class 2 vertices.
An $H$-configuration (shown on the left in Figure~\ref{H-conf}) is $6'$-reducible.
\end{lemma}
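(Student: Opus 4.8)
The plan is to suppose, for contradiction, that some minimal subcubic graph $G$ with girth at least $7$ and $\chi_l(G^2)>6$ contains an $H$-configuration, and to extend a coloring guaranteed by minimality. Fix notation: $v_1,v_2,v_3$ are $3$-vertices with $v_2$ adjacent to $v_1$ and $v_3$; the third neighbor of $v_2$ is a $2$-vertex $u_5$; the two neighbors of $v_1$ other than $v_2$ are $2$-vertices $u_1,u_2$; and the two neighbors of $v_3$ other than $v_2$ are $2$-vertices $u_3,u_4$. Let $H$ be the subgraph induced by $\{v_1,v_2,v_3,u_1,u_2,u_3,u_4,u_5\}$. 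A short girth argument (every alternative would create a cycle of length at most $6$) shows these eight vertices are distinct, that each $u_i$ has its remaining neighbor outside $V(H)$, and that no vertex of $V(H)$ has two neighbors outside $V(H)$; hence $G^2-V(H)=(G-V(H))^2$, so by minimality $(G-V(H))^2$ has a proper $L$-coloring. Let $L$ denote the residual lists at the vertices of $H$. Counting neighbourhoods in $G^2$ (girth at least $7$ keeps them from collapsing) gives $|L(v_1)|,|L(v_3)|\ge 4$, $|L(v_2)|\ge 5$, and $|L(u_i)|\ge 3$ for $i=1,\dots,5$; assume equality.

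The organizing observation is that, in $G^2$, the vertex $v_2$ is adjacent to all seven other vertices of $H$, while the remaining edges inside $G^2[V(H)]$ are exactly $v_1v_3$, $u_1u_2$, $u_3u_4$, and $v_1u_1,v_1u_2,v_1u_5,v_3u_3,v_3u_4,v_3u_5$. Thus $G':=G^2[V(H)\setminus\{v_2\}]$ is three triangles $v_1u_1u_2$, $v_1v_3u_5$, $v_3u_3u_4$ glued along $v_1$ and $v_3$, and the non-adjacent pairs available for reusing a colour are $\{u_i,u_j\}$ with $i\in\{1,2\}$, $j\in\{3,4\}$, the pairs $\{u_5,u_i\}$ for $i\le 4$, and $\{v_1,u_3\},\{v_1,u_4\},\{v_3,u_1\},\{v_3,u_2\}$. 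Because $v_2$ is $G^2$-adjacent to all seven other vertices of $H$ and $|L(v_2)|\ge 5$, it suffices to properly $L$-colour $G'$ using at most four colours in total (more generally, leaving some colour of $L(v_2)$ unused).

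To colour $G'$ cheaply I would start from the inequalities $|L(v_3)|+|L(u_1)\cup L(u_2)|\ge 4+3>6$ and $|L(v_1)|+|L(u_3)\cup L(u_4)|>6$, each of which forces a common colour; pick $\beta\in L(v_3)\cap(L(u_1)\cup L(u_2))$ and $\gamma\in L(v_1)\cap(L(u_3)\cup L(u_4))$, and after relabeling inside the pairs assume $\beta\in L(u_1)$ and $\gamma\in L(u_3)$. If $\beta\ne\gamma$, put $\beta$ on $v_3$ and $u_1$ and put $\gamma$ on $v_1$ and $u_3$ (legal since $v_3\not\sim u_1$ and $v_1\not\sim u_3$ in $G^2$, and $v_1\not\sim v_3$ is respected because $\beta\ne\gamma$); now $v_1$ already sees the repeated colour $\beta$ on $\{v_3,u_1\}$ and $v_3$ already sees the repeated colour $\gamma$ on $\{v_1,u_3\}$. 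It then remains to colour $u_2,u_4,u_5$, each of which still has an available colour; using that $u_2,u_4,u_5$ are pairwise non-adjacent and each non-adjacent to the far triangle, one can choose their colours so that the seven vertices of $G'$ together use at most four colours, and then finish at $v_2$.

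The delicate part---and the main obstacle---is the cluster of extremal cases where these intersections are forced to be singletons. When $L(v_3)\cap(L(u_1)\cup L(u_2))$ and $L(v_1)\cap(L(u_3)\cup L(u_4))$ are the \emph{same} singleton $\{\beta\}$, the size count pins down $L(u_1)=L(u_2)$, $L(u_3)=L(u_4)$, and most of $L(v_1),L(v_3)$; worse, if in addition $L(u_1)=L(u_2)=L(u_3)=L(u_4)$ and this set is disjoint from $L(u_5)$, then $G'$ genuinely needs five colours (three for $u_5,v_1,v_3$ and two for $u_1,\dots,u_4$). In these rigid cases I would argue by hand: the palette splits into one group of three colours (serving $u_5,v_1,v_3$) and one group of three (serving $u_1,\dots,u_4$), overlapping in at most one colour, which is restrictive enough that one can prescribe exactly which five colours get used on the seven vertices and steer the unused sixth colour into $L(v_2)$---possible because $L(v_2)$ omits at most one colour while the construction affords at least two choices of which colour to omit. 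Verifying these few rigid configurations, and checking that every non-extremal branch really keeps the colour count within budget, is where all the care goes; the rest is the list-size bookkeeping sketched above.
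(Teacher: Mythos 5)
Your structural analysis of $G^2$ on the eight configuration vertices is accurate (the lists $5,4,4,3,3,3,3,3$; $v_2$ adjacent to all seven others; the rest forming three triangles glued at $v_1$ and $v_3$), and your reduction ``it suffices to colour the other seven vertices while missing a colour of $L(v_2)$'' is the right target. But the proof does not close, for two reasons. First, the opening step is invalid: in \emph{list} colouring, $|L(v_3)|+|L(u_1)\cup L(u_2)|\ge 7>6$ does not force $L(v_3)\cap(L(u_1)\cup L(u_2))\ne\emptyset$, because the residual lists of distinct vertices are subsets of \emph{different} original $6$-lists and can be pairwise disjoint; comparing against the number $6$ proves nothing. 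The legitimate dichotomy (shared colour, \emph{or} a colour falling outside the list of a designated common neighbour) must be anchored to a specific vertex's list -- here $v_1$, with $|L(v_1)|=4$ -- and in its second alternative no colour is actually reused, so the total palette on $G'$ still grows. Since your whole strategy is to bound the \emph{number} of colours appearing on $G'$, the branch in which $\beta$ and $\gamma$ exist is not exhaustive. Second, even inside that branch you observe, correctly, that $u_2,u_4,u_5$ may be forced onto three new pairwise distinct colours, so that $G'$ can need five colours which might coincide exactly with $L(v_2)$; the promised repair (``steer the unused sixth colour into $L(v_2)$'') is precisely the content of the lemma in the hard cases, and you explicitly leave those cases unverified. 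As written this is a plan with the decisive cases open, not a proof.

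The paper's own argument is far more economical and avoids counting colours on $G'$ altogether: it colours $v_2$ \emph{first} with a colour $c\in L(v_2)\setminus L(u_5)$ -- a genuine set difference, valid because $|L(v_2)|=5>3=|L(u_5)|$ -- and then colours $u_1,u_2,v_1,u_3,v_3,u_4,u_5$ greedily, the colour saved at $u_5$ paying for the final step. That said, your instinct that this configuration is tight is sound: every vertex of the squared configuration has degree at least its residual list size, so no pure greedy order can exist and at least one save is indispensable; in fact the paper's stated order still leaves $u_4$ facing its three coloured $G^2$-neighbours $v_2,u_3,v_3$ against a list of size $3$, so a second local save (for instance reusing a colour on a non-adjacent pair among the neighbours of $v_3$, or choosing $c(v_3)\notin L(u_4)$) is needed to make even that greedy airtight. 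The moral is that the efficient route is one or two \emph{local} saves feeding a greedy order, not a global bound on the palette used by $G'$.
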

\begin{proof}
Let $H=G-\{v_1,v_2,v_3,u_1,u_2,u_3,u_4,u_5\}$ (see Figure 7).
By hypothesis, $H^2$ has a coloring from its lists.
Let $L(x)$ denote the list of remaining available colors for each uncolored vertex $x$ in $G$.
Note that $|L(u_i)|\geq 3$, $|L(v_1)|\geq 4$, $|L(v_3)|\geq 4$, and $|L(v_2)|\geq 5$.  We assume that equality holds. 
Since $|L(v_2)|> |L(u_5)|$, we can choose color $c\in L(v_2)\setminus L(u_5)$.
Use color $c$ on vertex $v_2$.  Now greedily color the remaining vertices in the order $u_1,u_2,v_1,u_3,v_3,u_4,u_5$.
\end{proof}

\begin{lemma}
\label{6reduc-4}
We use the term $Y$-configuration to denote a class 1 vertex adjacent to a class 2 vertex, adjacent to a class 1 vertex, and distance two from a class 3 vertex.
A $Y$-configuration (shown on the right in Figure~\ref{Y-conf}) is $6'$-reducible.
\end{lemma}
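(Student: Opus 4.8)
The plan is to argue by contradiction: assume $G$ is $6$-minimal with $g(G)\ge 7$ and contains a $Y$-configuration, and show that $G^2$ is nevertheless $6$-choosable. Label the configuration as in Figure~\ref{Y-conf}(b): $v_1$ is the class~$3$ vertex with $2$-vertex neighbours $u_1,u_2,u_3$; the vertex $u_3$ is also a neighbour of $v_2$; the class~$1$ vertex $v_2$ is additionally adjacent to $v_3$ (class~$2$) and $v_4$ (class~$1$); $v_3$ has $2$-vertex neighbours $u_4,u_5$; and $v_4$ has a $2$-vertex neighbour $u_6$ and a $3$-vertex neighbour $y$ lying outside the configuration. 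The hypothesis $g(G)\ge 7$ forces all of these vertices to be distinct, forces the outside neighbours of $u_1,u_2,u_4,u_5,u_6$ to be distinct from one another and from the configuration, makes the pairs drawn at distance $2$ or $3$ lie at exactly that distance, and --- since then no configuration vertex has two neighbours outside the configuration --- guarantees $G^2-S=(G-S)^2$, where $S$ is the $10$-vertex set of the configuration; this is the technical condition required in Section~\ref{prelims}.

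First I would delete $S$ and, by minimality, fix an $L$-colouring of $(G-S)^2=H^2$, keeping the name $L$ for the restricted lists on $S$. A routine neighbourhood count in $G^2$, using $g(G)\ge 7$, yields $|L(v_1)|\ge 4$, $|L(v_2)|\ge 5$, $|L(v_3)|\ge 4$, $|L(v_4)|\ge 2$, $|L(u_3)|\ge 6$, $|L(u_1)|,|L(u_2)|,|L(u_4)|,|L(u_5)|\ge 3$, and $|L(u_6)|\ge 2$. The cluster around $v_1$ is handled quickly: because $|L(v_1)|>|L(u_1)|$, colour $v_1$ with a colour outside $L(u_1)$ and reserve $u_3,u_2,u_1$ to be coloured last, in that order; at that stage $u_3$ has at most $4$ coloured neighbours in $G^2$ (among $v_1,v_2,v_3,v_4$), $u_2$ has at most $2$, and $u_1$ has at most $2$ forbidden colours (its neighbour $v_1$ contributing none), so all three can be finished greedily. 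After deleting $c(v_1)$ from $L(v_2)$ we still have $|L(v_2)|\ge 4$, and the whole problem reduces to colouring $\{v_2,v_3,v_4,u_4,u_5,u_6\}$ from these lists.

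The heart of the proof is this six-vertex instance. In $G^2$, $\{v_2,v_3,u_4,u_5\}$ induces a $K_4$, the triples $\{v_2,v_3,v_4\}$ and $\{v_2,v_4,u_6\}$ induce triangles, while $v_4$ is non-adjacent to both $u_4,u_5$ and $u_6$ is non-adjacent to both $u_4,u_5$; comparing the list sizes $4,4,2,3,3,2$ against the degrees $5,4,3,3,3,2$ shows we must save two colours. The key move is that colouring $v_4$ and $u_4$ with one colour saves a colour \emph{simultaneously} at $v_2$ and at $v_3$ --- their only common $G^2$-neighbours --- which is exactly the deficit. So I would split on whether $L(v_4)\cap L(u_4)=\emptyset$. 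In the good case, colour $v_4$ and $u_4$ alike first, then colour $v_2$ with a colour outside $L(u_6)$ (possible since, after removing the forbidden colours, $|L(v_2)|$ still exceeds $|L(u_6)|$), then colour $v_3$ (avoiding $L(u_5)$ whenever that is forced), and finally $u_6$ and $u_5$: each vertex is then preceded by fewer coloured $G^2$-neighbours than its list size, and $u_6$ is safe because one of $c(v_2),c(v_4)$ avoids $L(u_6)$. In the bad case $L(v_4)\cap L(u_4)=\emptyset$, so $|L(v_4)\cup L(u_4)|\ge 5$, and a short secondary case analysis (on $L(v_4)\cap L(u_5)$, and on whether $L(u_4)\cup L(u_5)\subseteq L(v_2)$) either produces two savings at $v_2$ directly or reduces to the good case after swapping the roles of $u_4$ and $u_5$. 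Combined with the $v_1$-cluster step, this colours $G^2$, contradicting $6$-minimality.

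The hard part will be the bad case, where the already-fixed outside colours leave no slack: since the four outside neighbours of $v_4$ are coloured, $|L(v_4)|$ can be exactly $2$, and $v_4$ lies in triangles with both the high-degree vertex $v_2$ and with $u_6$, which essentially pins down the colouring order and forces one to check that a ``save a colour'' step is always available with no control over the outside colours. I expect the flexibility of the kind provided by the \emph{moreover} clause of Lemma~\ref{6reduc-2} at the vertex $u_3$ --- here, in effect, the freedom to colour $u_3$ only after everything else from a list of size $6$ that will then have at most $4$ forbidden colours --- to be exactly what keeps the count from becoming too tight in the awkward sub-cases.
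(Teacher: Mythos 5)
Your setup is sound---the distinctness and distance claims under girth at least $7$, the verification that $G^2-S=(G-S)^2$ for the $10$-vertex set $S$, the list-size counts, and the deferral of $u_3,u_2,u_1$ to the end are all correct---but there is a genuine gap exactly where you flag it: the ``bad case'' $L(v_4)\cap L(u_4)=L(v_4)\cap L(u_5)=\emptyset$ of the six-vertex problem on $\{v_2,v_3,v_4,u_4,u_5,u_6\}$ is only sketched, and the sketch does not engage with the actual source of tightness. The binding structure is the triangle $v_2v_4u_6$ with list sizes $4,2,2$ sitting on top of the $K_4$ on $\{v_2,v_3,u_4,u_5\}$ with list sizes $4,4,3,3$: since $v_3\not\sim u_6$ and $v_4\not\sim u_4,u_5$ in $G^2$, the only non-adjacent pairs available for a saving are $(v_4,u_4)$, $(v_4,u_5)$ (excluded in the bad case), $(u_6,u_4)$, $(u_6,u_5)$, $(u_6,v_3)$, so everything hinges on where $L(u_6)$ and $L(v_4)$ sit relative to $L(v_2)$; for instance when $L(v_4)=L(u_6)\subseteq L(v_2)$ the adjacent pair $v_4,u_6$ necessarily burns two colors of $L(v_2)$ and one must then thread a system of distinct representatives through the residual $K_4$. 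Your proposed split ``on $L(v_4)\cap L(u_5)$ and on whether $L(u_4)\cup L(u_5)\subseteq L(v_2)$'' never mentions $L(u_6)$, and your closing hope that the slack at $u_3$ rescues the awkward sub-cases is a red herring: in your ordering $u_3$ is colored \emph{after} all of $v_2,v_3,v_4,u_4,u_5,u_6$, so its spare colors cannot relax any constraint on them. I believe the analysis can be pushed through, but as written the hard half of the lemma is missing.

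For contrast, the paper's proof avoids this six-vertex problem entirely and is much shorter: it deletes only $\{v_1,u_1,u_2,u_3\}$, observes that these four vertices induce a $K_4$ in $G^2$ with residual lists each of size at least $3$, so by Hall's theorem the coloring extends unless all four lists are the same $3$-set; in that event it recolors $v_2$ using the ``moreover'' clause of Lemma~\ref{6reduc-2}, which applies because in $H=G-\{v_1,u_1,u_2,u_3\}$ the vertex $v_2$ has become a $2$-vertex, promoting $v_3$ to class $3$ and $v_4$ to class $2$ with common neighbour $v_2$. Recoloring $v_2$ changes the residual lists of $u_3$ and $v_1$ but not those of $u_1,u_2$, breaking the identical-lists obstruction. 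If you want to salvage your approach, importing that recoloring idea is the cleanest way to buy the extra saving you need; otherwise the bad case must be written out in full.
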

\begin{proof}
Let $H=G-\{v_1,u_1,u_2,u_3\}$ (see Figure 7).
By hypothesis, $H^2$ has a proper coloring from its lists.
Let $L(x)$ denote the list of remaining available colors for each uncolored vertex $x$.
Assume the coloring of $H^2$ cannot be extended to $G^2$. 
Hence $|L(v_1)| = |L(u_1)| = |L(u_2)| = |L(u_3)|= 3$ and $L(v_1) = L(u_1) = L(u_2) = L(u_3)$.  (Otherwise the coloring could be extended to $G^2$.)
By Lemma~\ref{6reduc-2}, $H^2$ has a recoloring  such that $v_2$ gets a
different color than it currently has.
Under this recoloring of $H^2$, the lists of available colors for $u_1$ and $v_1$ are no longer identical.
Hence, the recoloring of $H^2$ can be extended to $G^2$.   
\end{proof}

\begin{theorem}
\label{2frac47}
If $G$ is a subcubic graph with $\Mad(G) < \frac{18}7$ and girth at least 7, then $\chi_l(G^2)\leq 6$.
\end{theorem}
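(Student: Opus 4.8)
The plan is to follow the discharging template announced in the text: take a minimal counterexample $G$ (so $G$ is $6'$-minimal with girth at least $7$ and $\Mad(G) < \frac{18}{7}$, and by hypothesis the square of every proper subgraph is $6$-choosable), and show that the absence of the four $6'$-reducible configurations from Lemmas~\ref{6reduc-1}--\ref{6reduc-4} forces $\Mad(G) \geq \frac{18}{7}$, a contradiction. First I would record the structural consequences of excluding those configurations: no two $2$-vertices are adjacent; no two adjacent $3$-vertices are both class~$2$; a class~$3$ vertex has no class~$2$ or class~$3$ vertex at distance $2$ through one of its $2$-neighbors; no class~$1$ vertex is adjacent to two class~$2$ vertices ($H$-configuration); and no class~$1$ vertex simultaneously has a class~$2$ neighbor, a class~$1$ neighbor, and a class~$3$ vertex at distance~$2$ ($Y$-configuration). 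These translate into bounds on how many ``poor'' (low-degree or low-class) vertices can cluster around a given vertex.

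Next I would set up the discharging: assign each vertex $v$ initial charge $d(v)$, so the total charge is $\Ad(G)\cdot n < \frac{18}{7} n$. The target is to redistribute so every vertex ends with charge $\geq \frac{18}{7}$, contradicting the sum. Each $2$-vertex needs to gain $\frac{18}{7} - 2 = \frac47$, so it should pull charge from nearby $3$-vertices (at distance $1$ and $2$, as in the girth-$7$ proof), and $3$-vertices of different classes will have different amounts to give: a class~$0$ vertex has surplus $3 - \frac{18}{7} = \frac37$ to donate, a class~$1$ vertex $\frac37$ minus whatever it must send to its one $2$-neighbor, and so on. I would choose rules of the form ``each $3$-vertex sends $\alpha$ to each adjacent $2$-vertex and $\beta$ to each $2$-vertex at distance~$2$'' with $\alpha, \beta$ tuned (likely $\alpha = \frac27$ or similar and $\beta$ smaller) so that a $2$-vertex, which has two $3$-neighbors each of which has at least one further $3$-neighbor by the no-adjacent-$2$-vertices rule, collects exactly $\frac47$. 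The $H$- and $Y$-configuration exclusions are what rescue the class~$1$ vertices: they bound how many class~$2$ vertices can lean on a class~$1$ vertex and how a class~$1$ vertex can be squeezed between a $2$-vertex and a class~$3$ vertex, ensuring class~$1$ vertices do not go below $\frac{18}{7}$.

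The main obstacle I expect is the case analysis for $3$-vertices of class~$1$, since these sit right at the margin: a class~$1$ vertex starts with $3$, must give $\alpha$ to its $2$-neighbor, and may also be asked for distance-$2$ charge by $2$-vertices adjacent to its two $3$-neighbors. The $Y$-configuration is precisely the tight bad case, so the discharging constants must be chosen so that the configuration-free hypothesis exactly closes the gap — this is where I would spend most of the effort, possibly adding a secondary rule transferring a small amount between adjacent $3$-vertices (e.g. a class~$2$ or class~$3$ vertex helping out a neighboring class~$1$ vertex) to balance things. Once the constants are pinned down, verifying $\mu^*(v) \geq \frac{18}{7}$ for $2$-vertices, class~$0$, class~$1$, class~$2$, and class~$3$ vertices is routine arithmetic, and the final contradiction with $\Mad(G) < \frac{18}{7}$ completes the proof; the corollary for planar graphs of girth $\geq 9$ then follows immediately from Lemma~\ref{girthlemma}.
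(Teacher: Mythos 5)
Your overall strategy is exactly the paper's: minimal counterexample, the same four $6'$-reducible configurations, initial charge $d(v)$, and a primary rule sending $\frac27$ from each $3$-vertex to each adjacent $2$-vertex so that $2$-vertices reach $\frac{18}{7}$. But the part you defer as ``where I would spend most of the effort'' is the actual content of the proof, and the one concrete guess you make there points the wrong way. After the primary rule, a class~$2$ vertex has charge $3-\frac47=\frac{17}{7}$ and a class~$3$ vertex has $3-\frac67=\frac{15}{7}$: these vertices are in \emph{deficit} and must receive charge, so a rule in which ``a class~$2$ or class~$3$ vertex helps out a neighboring class~$1$ vertex'' cannot be part of a working scheme. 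Likewise, distance-$2$ transfers from $3$-vertices to $2$-vertices buy you nothing, since a $2$-vertex may have no $3$-vertices at distance $2$ through a given neighbor; you are forced to $\alpha=\frac27$ and $\beta=0$ anyway.

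The paper's secondary rules, which you would need to discover, are: each class~$0$ vertex gives $\frac17$ to each adjacent $3$-vertex, and each class~$1$ vertex gives $\frac17$ to each adjacent class~$2$ vertex and to each class~$3$ vertex at distance~$2$. The reducible configurations are then used as follows: Lemma~\ref{6reduc-1} guarantees every class~$2$ vertex has a class~$0$ or class~$1$ neighbor to donate $\frac17$; Lemma~\ref{6reduc-2} forces all three $3$-vertices at distance $2$ from a class~$3$ vertex to be class~$1$, supplying $3\cdot\frac17$; Lemma~\ref{6reduc-3} caps a class~$1$ vertex at one class~$2$ neighbor; and Lemma~\ref{6reduc-4} forces the third neighbor of the tight class~$1$ vertex (one that pays both a class~$2$ neighbor and a distance-$2$ class~$3$ vertex) to be class~$0$, refunding the needed $\frac17$. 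Every class then lands exactly on $\frac{18}{7}$. So your skeleton is sound and identical to the paper's, but as written the proposal has a genuine gap: the auxiliary charge flow is unspecified except for a suggestion that runs in the infeasible direction, and the verification for class~$2$ and class~$3$ vertices is not ``routine arithmetic'' until those rules and the corresponding uses of Lemmas~\ref{6reduc-1} and~\ref{6reduc-2} are in place.
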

\begin{proof}
Let $G$ be a minimal counterexample to Theorem~\ref{2frac47}.
We show that if $G$ does not contain any of the $6'$-reducible configurations in Lemmas~\ref{6reduc-1},~\ref{6reduc-2},~\ref{6reduc-3},~and~\ref{6reduc-4},
then $\Mad(G) \geq \frac{18}7$.  We use a discharging argument with initial charge $\mu(v) = d(v)$.  
We have the following three discharging rules.
\begin{enumerate}
\item[] \textbf{R1}: Each 3-vertex gives $\frac27$ to each adjacent 2-vertex. 
\item[] \textbf{R2}: Each class 0 vertex gives $\frac17$ to each adjacent 3-vertex. 
\item[] \textbf{R3}: Each class 1 vertex gives $\frac17$ to each adjacent class 2 vertex and gives $\frac17$ to each class 3 vertex at distance~2.
\end{enumerate}

We must show that for every vertex $v$, the new charge $\mu^*(v)\geq \frac{18}7$.

Recall that each 2-vertex $v$ is adjacent only to 3-vertices. 
Hence, for a 2-vertex $v$ we have $\mu^*(v) = 2 + 2(\frac27) = \frac{18}7$.
So we consider 3-vertices.

Let $v$ be a 3-vertex.  We consider vertices of class 0, class 1, class 2, and class 3 separately.

If $v$ is class 0, then $\mu^*(v) = 3 - 3(\frac17) = \frac{18}7$.

If $v$ is class 2, then by Lemma~\ref{6reduc-1} vertex $v$ is adjacent to a class 1 vertex or a class 0 vertex.  
Hence $\mu^*(v) = 3 - 2(\frac27) + \frac17 = \frac{18}7$.

If $v$ is class 3, then by Lemma~\ref{6reduc-2} each 3-vertex at distance 2 from $v$ is a class 1 vertex.
Hence $\mu^*(v) = 3 - 3(\frac27) + 3\frac17 = \frac{18}7$.

Let $v$ be class 1.  By Lemma~\ref{6reduc-3}, $v$ is adjacent to at most one class 2 vertex.  
Clearly, $v$ is distance 2 from at most one class 3 vertex.
Hence $\mu^*(v)\geq\frac{18}7$ unless $v$ is adjacent to a class 2 vertex $w$ and distance 2 from a class 3 vertex $x$.  So we consider this case.
Let $y$ be the other 3-vertex adjacent to $v$.  Clearly, $y$ is not class 3 or class 2 (by Lemma~\ref{6reduc-3}).  If $y$ is class 1, then we have the $6'$-reducible subgraph
in Lemma~\ref{6reduc-4}.  Hence, $y$ must be class 0.  In that case $y$ gives $\frac17$ to $v$, so $\mu^*(v) = 3 - \frac27 - 2(\frac17) + \frac17 = \frac{18}7$.
Thus, $\Mad(G)\geq \frac{18}7$.
This is a contradiction, so no counterexample exists.  
\end{proof}

\begin{corollary}
If $G$ is a planar subcubic graph with girth at least 9, then $\chi_l(G^2)\leq 6$.
\end{corollary}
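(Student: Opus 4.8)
The plan is to derive this corollary immediately from the two main results already established in this section, namely Lemma~\ref{girthlemma} and Theorem~\ref{2frac47}. The only work is to check that the hypotheses of Theorem~\ref{2frac47} are met.

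First I would invoke Lemma~\ref{girthlemma} with $g = 9$: since $G$ is planar with girth at least $9$, we get $\Mad(G) < \frac{2 \cdot 9}{9-2} = \frac{18}{7}$. Next, I would observe that the girth hypothesis of Theorem~\ref{2frac47} is satisfied trivially, because $g(G) \geq 9 \geq 7$; this is the point where one must be slightly careful, since Theorem~\ref{2frac47} is not stated purely in terms of $\Mad$ — the girth-at-least-$7$ condition is genuinely needed (as remarked in the text, it guarantees, e.g., that $u_1$ and $u_4$ in Figure~6b are at distance at least $3$, which is used in the reducibility arguments of Lemmas~\ref{6reduc-1}--\ref{6reduc-4}). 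Finally, applying Theorem~\ref{2frac47} to $G$ yields $\chi_l(G^2) \leq 6$.

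There is essentially no obstacle here: the corollary is a packaging of Theorem~\ref{2frac47} for the planar setting via the standard Euler-formula bound on maximum average degree. The substantive content all lies in Theorem~\ref{2frac47} and the $6'$-reducibility of the four configurations; the discharging argument there, with rules R1--R3, is what forces a graph avoiding all four configurations to have $\Mad(G) \geq \frac{18}{7}$, contradicting the planarity-plus-girth bound. So in the write-up I would simply chain the two citations together in one or two sentences.

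\begin{proof}
Since $G$ is planar with girth at least $9$, Lemma~\ref{girthlemma} gives $\Mad(G) < \frac{2\cdot 9}{9-2} = \frac{18}{7}$. Moreover $g(G) \geq 9 \geq 7$, so $G$ satisfies both hypotheses of Theorem~\ref{2frac47}. Applying that theorem, we conclude $\chi_l(G^2) \leq 6$.
\end{proof}
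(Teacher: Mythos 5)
Your proof is correct and matches the paper's own argument exactly: apply Lemma~\ref{girthlemma} to get $\Mad(G) < \frac{18}{7}$ and then invoke Theorem~\ref{2frac47}. Your explicit check that the girth-at-least-$7$ hypothesis of Theorem~\ref{2frac47} is satisfied (since $9 \geq 7$) is a small point the paper's two-line proof leaves implicit, but there is no substantive difference.
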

\begin{proof}
From Lemma~\ref{girthlemma}, we see that $\Mad(G) < \frac{18}7$.  By Theorem~\ref{2frac47}, this implies that $\chi_l(G^2)\leq 6$.
\end{proof}

\section{Efficient Algorithms}

Since the proof of Theorem~\ref{mainthm} colors all but a constant number of vertices greedily, it is not surprising that the algorithm can be made to run in linear time.  For completeness, we give the details.

If $G$ is not 3-regular or $G$ has girth at most 6, then we find a small subgraph $H$ (one listed in Lemmas~\ref{3reg}-\ref{noC6}) that contains a low degree vertex
 or a shortest cycle.  It is easy to greedily color $G^2- V(H)$ in linear time (for example, using breadth-first search).  Since $H$ has constant size, we can finish the coloring in constant time.

Suppose instead that $G$ is 3-regular and has girth at least 7.
Choose an arbitrary vertex $v$.  Find a shortest cycle through $v$ (for example, using breadth-first search); call it $\cyc$.  Let $H$ consist of $\cyc$ and vertices at distance 1 from $\cyc$.  We greedily color $G^2- V(H)$ in linear time.
Using the details given in the proof of Theorem~\ref{mainthm}, we can finish the coloring in time linear in the size of $H$.

The proofs of Theorems~\ref{2frac45} and~\ref{2frac47} are examples of a large class of discharging proofs
that can be easily translated into linear time algorithms.
The algorithm for each consists of finding a reducible configuration $H$
(7-reducible for Theorem~\ref{2frac45} and $6'$-reducible for Theorem~\ref{2frac47}), recursively coloring $G^2- V(H)$, then extending the coloring to $G^2$.  To achieve a linear running time, we need to find the reducible configuration in amortized constant time.
We make no effort to discover the optimal constant $k$ in the $kn$ running time;
we only outline the technique to show that the algorithm can be made to run in
linear time.

First we decompose $G$, by removing one reducible configuration after another;
when we remove a configuration from $G$, we add it to a list $A$ (of removed configurations).
After decomposing $G$, we build the graph back up, adding elements of $A$ in the reverse of the order they were removed.  When we add back an element of $A$, we color all of its vertices.  In this way, we eventually reach $G$, with every vertex colored.  
We call these two stages the decomposing phase and the rebuilding phase.
It only remains to specify how we find each configuration that we remove during the decomposing phase.

Our plan is to maintain a list $B$ of instances in the graph of reducible 
configurations.  We begin with a preprocessing phase, in which we store in $B$ 
 every instance of a reducible configuration in the original graph.  Using brute force, we can do this in linear time (since we have only a constant number of reducible configurations and each configuration is of bounded size, each vertex can appear in only a constant number of instances of reducible configurations).

When we remove a reducible configuration $H$ from $G$, we may create new
reducible configurations.  We can search for these new reducible configurations 
in constant time (since they must be adjacent to $H$).  
We add each of these new reducible configurations to $B$.
In removing $H$, we may have destroyed one or more reducible configurations in $B$ (for example, if they contained vertices of $H$).
We make no effort to remove the destroyed configurations from $B$.
Thus, at every point in time, $B$ will contain all the reducible configurations in the remaining graph (as well as possibly containing many ``destroyed'' reducible configurations).
To account for this, when we choose a configuration $H$ from $B$ to remove from 
the remaining graph, we must verify that $H$ is not destroyed.
If $H$ is destroyed, we discard it, and proceed to the next configuration in $B$.
We will show that the entire process of decomposing $G$ (and building $A$) takes
linear time.  (However, during the process, the time required to find a particular configuration to add to $A$ may not be constant.)

Theorems~\ref{2frac45} and~\ref{2frac47} guarantee that as we decompose $G$, list $B$ will never be empty.
Our only concern is that perhaps $B$ may contain ``too many'' destroyed configurations.  We show that througout both the preprocessing phase and the decomposing phase, only a linear number of configurations get added to $B$.
In the original graph $G$, each vertex can appear in only a constant number of reducible configurations; hence, in the preprocessing phase, only a linear number of reducible configurations are added to $B$.

During the decomposing phase, if we remove a destroyed configuration from $B$,
we discard it without adding any configurations to $B$.  If we remove a valid
configuration from $B$, we add only a constant number of configurations to $B$.
Each time we remove a valid configuration from $B$, we decrease the number of
vertices in the remaining graph; hence we remove only a linear number of valid
configurations from $B$.  Thus, during the decomposing phase, we add only a 
linear number of configurations to $B$.  As a result, the decomposing phase
runs in linear time.

During the rebuilding phase, we use constant time to add a configuration back,
and constant time to color the configuration's vertices (we do this using the lemma that proved the configuration was reducible).  List $A$ contains only a linear number of configurations, hence, the rebuilding phase runs in linear time.

\section{Future Work}

As we mentioned in the introduction, Theorem~\ref{mainthm} is best possible, since there are infinitely many connected subcubic graphs $G$ such that $\chi_l(G^2)=8$ (for example, any graph which contains the Petersen graph with one edge removed).  However, it is natural to ask whether the result can be extended to graphs with arbitrary maximum degree.  Let $G$ be a graph with maximum degree $\Delta(G)=k$.  Since $\Delta(G^2)\leq k^2$, we immediately get that $\chi_l(G^2)\leq k^2+1$.  If $G^2\neq K_{k^2+1}$, then by the list-coloring version of Brooks' Theorem~\cite{ERT}, we have $\chi_l(G^2)\leq k^2$.  Hoffman and Singleton~\cite{hoffman} made a thorough study of graphs $G$ with maximum degree $k$ such that $G^2=K_{k^2+1}$.  They called these \textit{Moore Graphs}.  They showed that a unique Moore Graph exists when $\Delta(G)\in\{2,3,7\}$ and possibly when $\Delta(G)=57$ (which is unknown), but that no Moore Graphs exist for any other value of $\Delta(G)$.  (When $\Delta(G)=3$, the unique Moore Graph is the Petersen Graph).  Hence, if $\Delta(G)\not\in\{2,3,7,57\}$, we know that $\chi_l(G^2)\leq\Delta(G)^2$.  As in Theorem~\ref{mainthm}, we believe that we can improve this upper bound.

\begin{conjecture}
If $G$ is a connected graph with maximum degree $k\geq3$ and $G$ is not a Moore Graph, then $\chi_l(G^2)\leq k^2-1$.
\label{conj}
\end{conjecture}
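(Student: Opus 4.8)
The plan is to follow the proof of Theorem~\ref{mainthm}, replacing the pair $(3,8)$ by $(k,k^2-1)$ throughout, so that a minimal counterexample is driven to be a Moore graph. Let $G$ be a connected counterexample of maximum degree $k$, minimal in the sense that $G^2$ is not $(k^2-1)$-choosable but the square of every proper subgraph of $G$ is; every vertex of $G$ then has degree at least $2$ (degree-$1$ vertices being reducible, as in Section~\ref{prelims}). Since $|N[v]|\le k^2+1$ for every $v$, we have $\Delta(G^2)\le k^2$. If $\Delta(G^2)\le k^2-1$, the list version of Brooks' theorem~\cite{ERT} gives $\chi_l(G^2)\le k^2-1$ unless $G^2$ is complete or an odd cycle; the odd cycle is impossible for $k\ge 3$, and $G^2=K_{k^2}$ would require a connected graph of maximum degree $k$, diameter at most $2$, and exactly $k^2$ vertices. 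A vertex of degree below $k$ sees at most $1+(k-1)k<k^2$ vertices within distance~$2$, so such a graph is $k$-regular; this is impossible for odd $k$ by the handshake lemma, and for even $k$ it is excluded by a Moore-bound argument. Hence we may assume some vertex $v^*$ has $|N[v^*]|=k^2+1$, so $v^*$ lies on no cycle of length at most $4$ and no two neighbors of $v^*$ share a neighbor other than $v^*$.

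The greedy engine carries over verbatim. As in Lemma~\ref{mainlemma}, ordering the vertices of $G^2$ by decreasing distance in $G$ from a fixed edge $uv$ lets us color $G^2-\{u,v\}$ from lists of size $k^2-1$, since each vertex we color still has two uncolored vertices among its at most $k^2+1$ closed neighbors. Defining $\ex(v)=1+$ (number of colors available at $v$) $-$ (number of uncolored $G^2$-neighbors of $v$), one checks $\ex(v)\ge 0$ for every partial coloring, and the analogue of Lemma~\ref{extlemma} holds: if the uncolored vertices can be ordered so that each of them except two adjacent vertices $u,v$ with $\ex(u)\ge 1$ and $\ex(v)\ge 2$ is followed by two later $G^2$-neighbors, the coloring finishes. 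Short cycles, small distances in $G$, and low-degree vertices all create positive excess, and the ``saving a color'' device applies whenever two $G^2$-neighbors $v_1,v_2$ of $v$ are nonadjacent in $G^2$ with $|L(v_1)|+|L(v_2)|>|L(v)|$.

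With these tools one performs the structural reductions in the spirit of Lemmas~\ref{3reg}--\ref{noC6}. First $G$ is $k$-regular: a vertex of degree below $k$ has at most $k^2-k$ neighbors in $G^2$, hence $\ex(v)\ge k\ge 3$, which together with $\ex\ge 1$ at a neighbor finishes the coloring. Next, short cycles are killed one length at a time: a cycle of length $\ell$ through a vertex forces coincidences among the $G^2$-neighbors of the vertices on it and so positive excess on several of them, and combining this with the ``saving a color'' and Hall-type arguments, exactly as in the proofs of Lemmas~\ref{no5cyclepath}--\ref{noC5}, shows every cycle of length up to some threshold $g_0(k)$ is reducible; the girth-$5$ case is the one that invokes the hypothesis that $G$ is not a Moore graph. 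One also shows that no two vertices at distance~$1$ from a shortest cycle are adjacent, since that would yield a shorter cycle. Finally, letting $\cyc=v_1\cdots v_m$ be a shortest cycle and $U$ the set of its off-cycle neighbors, color $G^2-(V(\cyc)\cup U)$ by the greedy engine; then, mirroring the five cases of the proof of Theorem~\ref{mainthm}, either save enough colors on two consecutive cycle vertices to finish, or be forced into the extremal situation in which for each $i$ the list of every off-cycle neighbor of $v_i$ lies inside $L(v_i)$ and these lists are pairwise disjoint across the relevant window, whereupon a counting/Hall argument generalizing Case~5 completes the coloring, since $m>g_0(k)$ leaves no Moore-like obstruction.

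The real difficulty is the middle of the last step. The ``saving a color'' analysis that for $k=3$ already fills several pages and needs ad hoc lemmas about $5$-cycles sharing edges (Lemmas~\ref{no5cyclepath}--\ref{no5cycleedge}) does not obviously admit a uniform, $k$-independent form: the number of locally dense but non-Moore configurations --- the analogues of the Petersen graph with an edge removed, on which $k^2-1$ is exactly tight --- grows with $k$, and precisely there the argument has no slack. Producing a clean reducibility argument for all cycles up to the required threshold, and a single counting argument that simultaneously handles every near-extremal non-Moore local structure, is where the genuine work lies, and is why the statement remains a conjecture.
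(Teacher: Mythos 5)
This statement is a conjecture: the paper offers no proof of it, and your proposal does not contain one either --- what you have written is a plan, and you concede as much in your final paragraph. The parts of your sketch that are actually carried out --- the reduction via the list version of Brooks' theorem and the Erd\H{o}s--Fajtlowicz--Hoffman theorem to the case where some vertex has $|N[v]|=k^2+1$, the greedy/excess engine of Lemmas~\ref{mainlemma} and~\ref{extlemma} with $8$ replaced by $k^2-1$, and the observation that a vertex of degree less than $k$ has excess at least $k\geq 3$ and hence that a minimal counterexample is $k$-regular --- coincide exactly with what the authors themselves assert in the paragraph following the conjecture: a counterexample to Conjecture~\ref{conj} must be $k$-regular with girth $4$ or $5$. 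Up to that point your argument is sound, but it reproduces only the partial progress the paper already records.

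The gap is everything after that. Your claim that every cycle of length up to some threshold $g_0(k)$ is reducible ``exactly as in the proofs of Lemmas~\ref{no5cyclepath}--\ref{noC5}'' is precisely the step that is not known for general $k$. For $k=3$ the elimination of girth $4$ (Lemma~\ref{noC4}) and of girth $5$ (Lemmas~\ref{no5cyclepath}, \ref{no5cycleedge}, and~\ref{noC5}) rests on a finite, hand-checked case analysis in which the counting is exactly tight: the residual lists satisfy $|L(u_i)|\geq 2$ and $|L(v_i)|\geq 6$, summing to just one more than $8$; the non-Petersen hypothesis is invoked at a single specific point to forbid one specific identification $u_7=u_8=u_9$; and the Hall-type argument in Case~3 of Lemma~\ref{noC5} depends on these precise list sizes. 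None of this is parametrized by $k$, and you give no replacement for it --- you only assert that ``combining this with the saving-a-color and Hall-type arguments'' works. The paper's authors state explicitly that they do not see how to handle the girth-$4$ and girth-$5$ cases for general $k$ without extensive case analysis that they have not done, and your proposal ends by acknowledging the same obstruction. So the proposal correctly locates the difficulty but does not resolve it; it is a restatement of why the statement is a conjecture, not a proof of it.
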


Erd\H{o}s, Fajtlowitcz and Hoffman~\cite{EFH} considered graphs $G$ with maximum degree $k$ such that $G^2=K_{k^2}$.  The proved the following result, which
provides evidence in support of our conjecture.

\begin{oneshot}
(Erd\H{o}s, Fajtlowitcz and Hoffman~\cite{EFH})
Apart from the cycle $C_4$, there is no graph $G$ with maximum degree $k$ such
that $G^2=K_{k^2}$.
\end{oneshot}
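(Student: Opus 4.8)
The plan is to interpret $G^2=K_{k^2}$ as saying that $G$ has exactly $k^2$ vertices and diameter at most $2$, and then run a ``near-Moore'' analysis. The first step is a Moore-type count: for each vertex $v$ we have $k^2=|V(G)|\le 1+d(v)+d(v)(k-1)=1+kd(v)$, which forces $d(v)=k$, so $G$ is $k$-regular (and connected, since its diameter is finite). The cases $k\le 2$ are then disposed of by hand: there is no graph with $k=1$, and a $2$-regular simple graph on $4$ vertices is $C_4$. So from here on I would assume $k\ge 3$ and aim for a contradiction.

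The second step extracts the local structure. Fix $v$ and count the $k(k-1)$ edges joining $N(v)$ to $V\setminus\{v\}$: each lies inside $N(v)$ (hence is counted twice) or reaches one of the $k^2-k-1$ vertices at distance exactly $2$ from $v$, each of which must be reached at least once. Writing $t=e(G[N(v)])$ and $c_w$ for the number of common neighbours of $v$ and a distance-$2$ vertex $w$, this says $2t+\sum_w c_w=k(k-1)$ while $\sum_w c_w\ge k^2-k-1$, forcing $t=0$ and $\sum_w c_w=k^2-k$. Hence $G$ is triangle-free, and for each $v$ there is a \emph{unique} vertex $w(v)$ sharing exactly two neighbours with $v$, every other non-adjacent pair sharing exactly one. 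Since ``sharing two common neighbours'' is symmetric and $w(v)\neq v$, the map $v\mapsto w(v)$ is a fixed-point-free involution of $V(G)$; in particular $k^2$ is even, so $k$ is even and $k\ge 4$.

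The third step is spectral. Let $A$ be the adjacency matrix of $G$, $J$ the all-ones matrix, and $M$ the (symmetric) permutation matrix of $v\mapsto w(v)$, so $M^2=I$, $\textrm{tr}\,M=0$, and $MJ=JM=J$. Counting common neighbours gives the identity
\[
A^2+A-(k-1)I=J+M .
\]
Multiplying this on the left and on the right by $A$ and using $AJ=JA=kJ$ shows $AM=MA$, so $A$ and $M$ are simultaneously diagonalizable. The $(-1)$-eigenspace of $M$ has dimension $k^2/2$ and lies in $\mathbf 1^{\perp}$; the $(+1)$-eigenspace has dimension $k^2/2$ and contains $\mathbf 1$, whose $A$-eigenvalue is $k$. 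Restricting the identity to these eigenspaces (where $J$ acts as $0$ on $\mathbf 1^{\perp}$) gives $A^2+A-kI=0$ on the $(+1)$-part of $\mathbf 1^{\perp}$ and $A^2+A-(k-2)I=0$ on the $(-1)$-part. Hence every eigenvalue of $A$ other than $k$ is one of $\frac{-1\pm\sqrt{4k+1}}{2}$ or $\frac{-1\pm\sqrt{4k-7}}{2}$, and these four values together with $k$ are pairwise distinct because $x^2+x-k$ and $x^2+x-(k-2)$ share no root.

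The final step — which I expect to be the main obstacle, along with pinning down the involution $w(\cdot)$ in step two — is the numerical kill. Since $A$ has zero diagonal and $w(v)\not\sim v$, we get $\textrm{tr}\,A=0$ and $\textrm{tr}(MA)=0$; splitting each trace over the two $M$-eigenspaces yields $\textrm{tr}(A|_{+1})=\textrm{tr}(A|_{-1})=0$. If $4k+1$ is not a perfect square, then $\frac{-1\pm\sqrt{4k+1}}{2}$ are Galois conjugates and so occur with equal multiplicity $p$ in the integer characteristic polynomial of $A$; as they occur only in the $(+1)$-eigenspace, $\textrm{tr}(A|_{+1})=0$ becomes $k-p=0$, and then $2p=k^2/2-1$ gives $k^2-4k-2=0$, which has no integer root. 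Symmetrically, if $4k-7$ is not a perfect square, $\textrm{tr}(A|_{-1})=0$ gives $-r=0$ for a multiplicity $r$ with $2r=k^2/2>0$, a contradiction. Hence both $4k+1$ and $4k-7$ are perfect squares, say $a^2$ and $b^2$; then $a^2-b^2=8$ with $a,b$ odd forces $a=3,\ b=1$, i.e.\ $k=2$, contradicting $k\ge 4$. The delicate points are therefore the local count that produces $w(\cdot)$ and keeping the eigenvalue-multiplicity bookkeeping across the two $M$-eigenspaces honest; everything else is routine.
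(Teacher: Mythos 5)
The paper does not prove this statement; it is imported verbatim from Erd\H{o}s, Fajtlowicz and Hoffman \cite{EFH} and used as a black box (only the subsequent clique-number lemma is proved in the text). So there is no in-paper argument to compare against, and your proposal must stand on its own. It does: I checked each step and the argument is correct and complete. The Moore-type count forcing $k$-regularity, the edge count $2t+\sum_w c_w=k(k-1)$ against $\sum_w c_w\ge k^2-k-1$ yielding triangle-freeness and the fixed-point-free involution $w(\cdot)$ (hence $k$ even, $k\ge 4$), the identity $A^2+A-(k-1)I=J+M$ with the commutation $AM=MA$, the splitting into the two $M$-eigenspaces with minimal polynomials $x^2+x-k$ and $x^2+x-(k-2)$, and the two trace conditions $\mathrm{tr}(A|_{+1})=\mathrm{tr}(A|_{-1})=0$ forcing both $4k+1$ and $4k-7$ to be perfect squares, all verify; the final Diophantine $a^2-b^2=8$ with $a,b$ odd gives $k=2$, contradicting $k\ge 4$. (A micro-shortcut at the end: once $b\ge 3$, already $(a-b)(a+b)\ge 2\cdot 8>8$.) This is essentially the classical Hoffman--Singleton-style spectral argument, which is in the spirit of the original \cite{EFH} proof; the only stylistic caveat is that your ``$k(k-1)$ edges'' count is really a count of edge-endpoints leaving $N(v)$ away from $v$, which you do handle correctly by the factor of $2$ on internal edges.
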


We extend this result to give a bound on the clique number $\omega(G^2)$ of 
the square of a non-Moore graph $G$ with maximum degree $k$.

\begin{lemma}
If $G$ is a connected graph with maximum degree $k\geq 3$ and $G$ is not a Moore graph, then the clique number $\omega(G^2)$ of $G^2$ is at most $k^2-1$.
\end{lemma}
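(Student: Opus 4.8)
The plan is to assume $\omega(G^2)\ge k^2$ and show that $G$ must then be a Moore graph. First I would dispose of the boundary value. Since $\Delta(G^2)\le k^2$ we have $\omega(G^2)\le k^2+1$, and if equality held then every vertex $v$ of a $K_{k^2+1}$ in $G^2$ would have at least $k^2$ neighbours in $G^2$ inside that clique and at most $k^2$ neighbours in total, forcing $N_{G^2}[v]$ to equal the clique. Then the clique would be a union of components of $G^2$, so $G^2=K_{k^2+1}$ (as $G$, hence $G^2$, is connected); in that case $G$ has $k^2+1$ vertices, diameter $2$, and maximum degree $k$, i.e.\ $G$ is a Moore graph. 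So from now on I assume $\omega(G^2)=k^2$, fix a clique $K$ of $G^2$ with $|K|=k^2$, and aim for a contradiction.

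Next I would pin down the local structure forced at the vertices of $K$. For $v\in K$, write $N_2(v)$ for the set of vertices at distance exactly $2$ from $v$ in $G$; then $K\subseteq N_{G^2}[v]=\{v\}\cup N_G(v)\cup N_2(v)$, and since $|N_2(v)|\le \deg_G(v)(k-1)$ we get $1+\deg_G(v)k\ge |N_{G^2}[v]|\ge k^2$, which forces $\deg_G(v)=k$. Consequently $N_{G^2}[v]$ is either $K$ itself or $K\cup\{w_v\}$ for a single $w_v\notin K$. A short count of the ``collisions'' that make $|N_{G^2}[v]|$ fall below its maximum $1+k^2$ — a triangle through $v$, a neighbour of $v$ of degree $<k$, or a distance-$2$ vertex with two neighbours in $N_G(v)$ — shows that the deficiency $1+k^2-|N_{G^2}[v]|$ is at most $1$; since triangles contribute an even amount, no vertex of $K$ lies on a triangle. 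When the deficiency is $0$ (call this a \emph{full ball} at $v$) one obtains the complete Moore-ball picture: $v$ and all its neighbours are $k$-regular, $N_G(v)$ is independent, and $v$ lies on no $4$-cycle; and the deficiency at $v$ is $0$ precisely when some vertex outside $K$ lies within distance $2$ of $v$.

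Now I would split into two cases. If every $v\in K$ has deficiency $1$, then $N_G(v)\subseteq N_{G^2}[v]=K$ for every $v\in K$, so $G$ has no edge between $K$ and $V(G)\setminus K$; connectedness gives $V(G)=K$, hence $G^2=K_{k^2}$, and by the Erd\H{o}s--Fajtlowicz--Hoffman theorem $G$ must be $C_4$, contradicting $k\ge 3$. Otherwise some $v_0\in K$ carries a full ball $B:=N_{G^2}[v_0]=K\cup\{w\}$ with $w\notin K$, $|B|=k^2+1$. Writing $N_G(v_0)=\{x_1,\dots,x_k\}$ (pairwise nonadjacent, each $k$-regular) and $N_G(x_i)=\{v_0\}\cup Y_i$ with the $Y_i$ disjoint of size $k-1$, we have $B=\{v_0\}\cup\{x_1,\dots,x_k\}\cup\bigcup_i Y_i$.

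The crux is then to show that the Moore structure of $B$ propagates and forces $G$ itself to be a Moore graph. The key observation is that a vertex $y\in Y_{i_0}\cap K$ must lie within distance $2$ in $G$ of every $x_{i'}\in K$ with $i'\ne i_0$, and since $y$ is adjacent to no $x_{i'}$ with $i'\ne i_0$, this forces $N_G(y)$ to meet each such $Y_{i'}$; counting the $k$ edges at $y$ then shows that $y$ has no neighbour outside $B$, and in the one residual situation where $y$ does reach outside $B$ the same count shows $N_{G^2}[y]$ is itself a full ball, so the full-ball property spreads. Running this through the two sub-cases $w\in N_G(v_0)$ and $w\in N_2(v_0)$ forces $G$ to be $k$-regular on $k^2+1$ vertices with no $3$- or $4$-cycle, i.e.\ a Moore graph, contradicting the hypothesis. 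I expect this last step — propagating the full-ball property and disposing of the single exceptional vertex $w$ — to be the main obstacle: everything before it is routine Moore-bound counting together with the appeal to the Erd\H{o}s--Fajtlowicz--Hoffman theorem, but the final bookkeeping is delicate and genuinely needs the case split according to whether $w$ lies at distance $1$ or $2$ from $v_0$.
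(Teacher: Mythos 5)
Your setup is sound through Case A: the count forcing $\deg_G(v)=k$ for every $v$ in the clique $K$, the deficiency analysis (deficiency at most $1$, triangles costing $2$, hence no triangle through a clique vertex), and the reduction of the ``no full ball'' case to the Erd\H{o}s--Fajtlowicz--Hoffman theorem all check out, and this matches the skeleton of the paper's argument (the paper invokes EFH up front to produce a vertex outside the clique rather than as a terminal case, which is only a cosmetic difference). The genuine gap is Case B, and you flag it yourself: the ``propagation of the full-ball property'' around $K$, the claim that each $y\in Y_{i_0}\cap K$ has no neighbour outside $B$, and the split over $w\in N_G(v_0)$ versus $w\in N_2(v_0)$ are never actually carried out. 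As described, this route invites exactly the open-ended bookkeeping you are worried about, and it aims at a stronger intermediate conclusion ($V(G)=B$ and $G$ Moore) than is needed.

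The step that closes the gap is to stop working outward from $v_0$ and instead compute the ball of the \emph{outside} vertex $w$. First, you may assume $w$ is a $G$-neighbour of some $v_1\in K$: if $w$ is at distance $2$ from $v_0$, the intermediate vertex lies in $N_{G^2}[v_0]\setminus\{w\}=K$ and is adjacent to $w$, so rename it $v_1$; in either case $K\cup\{w\}\subseteq N_{G^2}[v_1]$ forces $N_{G^2}[v_1]=K\cup\{w\}$. Now every $G$-neighbour $z$ of $w$ lies in $N_{G^2}[v_1]\setminus\{w\}=K$, hence $K\cup\{w\}\subseteq N_{G^2}[z]$, hence $N_{G^2}[z]=K\cup\{w\}$ has the full size $k^2+1$; by your own deficiency analysis each such $z$ has degree $k$ and lies on no $3$- or $4$-cycle, so the $G$-neighbours of $w$ are pairwise nonadjacent and share no common neighbour other than $w$. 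Moreover $|N_{G^2}[v_1]|=k^2+1$ forces every neighbour of $v_1$, including $w$, to have degree exactly $k$. Therefore $N_{G^2}[w]\setminus\{w\}$ consists of exactly $k+k(k-1)=k^2$ distinct vertices, all of which lie in $K$ (the $z$'s lie in $K$, and each further neighbour lies in $N_{G^2}[z]\setminus\{w\}=K$); since $|K|=k^2$, this set is all of $K$, so $K\cup\{w\}$ is a clique of size $k^2+1$ in $G^2$, contradicting $\omega(G^2)=k^2$ (or, via your opening paragraph, forcing $G$ to be a Moore graph). This is exactly how the paper finishes, and it replaces the propagation argument and the distance-$1$/distance-$2$ case split with a single count.
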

\begin{proof}
If $G$ is a counterexample, then by the Theorem of Erd\H{o}s, Fajtlowitcz and Hoffman, we know that $G^2$ properly contains a copy of $K_{k^2}$.
Choose adjacent vertices $u$ and $v_1$ such that $v_1$ is in a clique of size $k^2$ (in $G^2$) and $u$ is not in that clique; call the clique $H$.
Note that $|N[v_1]|\leq k^2+1$, so all vertices in $N[v_1]$ other than $u$ must be in $H$.
Label the neighbors of $u$ as $v_i$s.  Note that no $v_i$ is on a 4-cycle.  If so, then $|N[v_i]|\leq k^2$; since $u\in N[v_i]$ and $u\not\in V(H)$, we get $|V(H)|\leq k^2-1$, which is a contradiction.

Note that each neighbor of a vertex $v_i$ (other than $u$) must be in $H$.  Since no $v_i$ lies on a 4-cycle, each pair $v_i,v_j$ have $u$ as their only common neighbor.  So the $v_i$s and their neighbors (other than $u$) are $k^2$ vertices in $H$.  But $u$ is within distance 2 of each of these $k^2$ vertices in $H$.  Hence, adding $u$ to $H$ yields a clique of size $k^2+1$.  This is a contradiction.
\end{proof}

We believe that Conjecture~\ref{conj} can probably be proved using an argument similar to our proof of Theorem~\ref{mainthm}.  In fact, arguments from our proof of Theorem~\ref{mainthm} easily imply that if $G$ is a counterexample to Conjecture~\ref{conj}, then $G$ is $k$-regular and has girth either 4 or 5. 
However, we do not see a way to handle these remaining cases without resorting to extensive case analysis (which we have not done).

Significant work has also been done proving lower bounds on $\chi_l(G)$.
Brown~\cite{brown} constructed a graph $G$ with maximum degree $k$ and $\chi_l(G^2)\geq k^2-k+1$ whenever $k-1$ is a prime power.  By combining results of Brown~\cite{brown} and Huxley~\cite{huxley}, Miller and \v{S}ir\'{a}\v{n}~\cite{miller} showed
that for every $\epsilon>0$ there exists a constant $c_{\epsilon}$ such that for every $k$ there exists a graph $G$ with maximum degree $k$ such that $\chi_l(G^2)\geq k^2 - c_{\epsilon}k^{19/12+\epsilon}$.

Another area for further work is reducing the girth bounds imposed in 
Theorems~\ref{2frac45} and~\ref{2frac47}.  
We know of no subcubic planar graph $G$ with girth at least 4 such that $\chi_l(G^2)=7$.  (If $G$ is the cartesian product $C_3\Box K_2$, then subdividing an edge of $G$ not in a 3-cycle yields a planar subcubic graph $G'$ such that $\chi_l((G')^2)=7$).
We know of no subcubic planar graph $G$ with girth at least 6 such that $\chi_l(G^2)= 6$.

Finally, we can consider the restriction of Theorem~\ref{mainthm} to planar graphs.
If $G$ is a planar subcubic graph, then we know that $\chi_l(G^2)\leq 8$.
However, we do not know of any planar graphs for which this is tight.
This returns us to the question that motivated much of this research
and that remains open.
\begin{question}
Is it true that every planar subcubic graph $G$ satisfies $\chi_l(G^2)\leq 7$?
\end{question}

It is easy to show that Question~2 is equivalent to the analagous question for planar cubic graphs.  To prove this, we show how to extend a planar subcubic graph to a planar cubic graph.  Let $G$ be a planar subcubic graph with a vertex $v$ of degree at most 2.  Let $J$ be the graph formed by subdividing an edge of $K_4$ and let $u$ be the 2-vertex in $J$.  If $d(v)=1$, associate vertices $u$ and $v$; if $d(v)=2$, instead add an edge between $u$ and $v$.  By repeating this process for each 1-vertex and 2-vertex in $G$, we reach a planar cubic graph.

\section{Acknowledgements}
We would like to thank A.V. Kostochka, D.B. West, and an anonymous referee for their invaluable comments.  After proving Theorem~\ref{2frac47}, we learned that Fr\'{e}d\'{e}ric Havet~\cite{havet} has proved the same result.


\end{document}